\newtheorem{theorem}{Theorem}[section]
\newtheorem{lemma}[theorem]{Lemma}
\newtheorem{corollary}[theorem]{Corollary}
\newtheorem{cor}[theorem]{Corollary}
\newtheorem{definition}[theorem]{Definition}
\theoremstyle{remark}
\newtheorem{remark}[theorem]{Remark}
\def\Im{{\rm Im}}
\newcommand{\les}{\lesssim}
\newcommand{\const}{\mbox{\rm const}}
\newcommand{\beeq}{\begin{equation}}
\newcommand{\eneq}{\end{equation}}
\newcommand{\supp}{\mbox{\rm supp}}
\newcommand{\eps}{{\varepsilon}}
\newcommand{\R}{{\mathbb R}}
\newcommand{\calL}{{\mathcal L}}
\newcommand{\calS}{{\mathcal S}}
\newcommand{\calM}{{\mathcal M}}
\newcommand{\calH}{{\mathcal H}}
\newcommand{\less}{\lesssim}
\newcommand{\gtr}{\gtrsim}
\newcommand\nind{\noindent}
\def\la{\langle}
\def\ra{\rangle}
\numberwithin{equation}{section}
\begin{document}

\title[Decay for the Wave and Schr\"odinger evolutions: part I]{Decay
for the wave and Schr\"odinger evolutions on manifolds with conical ends, Part~I}

\author{Wilhelm Schlag}
\address{University of Chicago, Department of Mathematics,
5734 South University Avenue, Chicago, IL 60637, U.S.A.}
\email{schlag@math.uchicago.edu}
\thanks{The first author was partly supported by the National
Science Foundation DMS-0617854.}

\author{Avy Soffer}
\address{Rutgers University, Department of Mathematics, 110 Freylinghuysen Road, Piscataway, NJ 08854, U.S.A.}
\email{soffer@math.rutgers.edu}
\thanks{The second author was partly supported by the National
Science Foundation DMS-0501043.}

\author{Wolfgang Staubach}
\address{Department of Mathematics,
Colin Maclaurin Building, Heriot-Watt University, Edinburgh, EH14 4AS}
\email{W.Staubach@hw.ac.uk}

\subjclass[2000]{35J10}


\begin{abstract}
Let $\Omega\subset \R^N$ be a compact imbedded Riemannian manifold of
dimension~$d\ge1$ and define the $(d+1)$-dimensional Riemannian manifold
$\calM:=\{(x,r(x)\omega)\::\: x\in\R,\, \omega\in\Omega\}$ with  $r>0$ and smooth, and the natural metric
$ds^2=(1+r'(x)^2)dx^2+r^2(x)ds_\Omega^2$.  We require that $\calM$ has
conical ends: $r(x)=|x| + O(x^{-1})$ as $x\to \pm\infty$. The Hamiltonian flow on such manifolds
always exhibits trapping.  Dispersive
estimates for the Schr\"odinger evolution $e^{it\Delta_\calM}$ and
the wave evolution $e^{it\sqrt{-\Delta_\calM}}$ are obtained for
data of the form $f(x,\omega)=Y_n(\omega) u(x)$ where $Y_n$ are eigenfunctions of~$\Delta_\Omega$.
This paper treats the case $d=1$, $Y_0=1$. In Part~II of this paper we provide details for all
cases $d+n>1$.  Our method combines two main ingredients:

\noindent (A) a detailed scattering analysis of Schr\"odinger operators
of the form $-\partial_\xi^2 + V(\xi)$ on the line where $V(\xi)$ has inverse square behavior at infinity

\noindent (B) estimation of oscillatory integrals  by (non)stationary phase.
\end{abstract}

\maketitle

\section{Introduction}

\noindent It is well-known that the free Schr\"odinger evolution
on $\R^{n+1}$ satisfies the dispersive bound
\begin{equation}
\label{eq:disp} \| e^{it\Delta} f\|_\infty \less |t|^{-\frac{n}{2}}
\|f\|_1
\end{equation}
where $\Delta$ denotes the Laplacean in $\R^n$.  Similarly, solutions to the wave equation
\[
\Box u =0,\quad u(0)=u_0,\; \partial_t u(0)= u_1
\]
in $\R^{n+1}$
satisfy
\begin{equation}\label{eq:wave_dispRn}
\begin{aligned}
\| u(t,\cdot)\|_\infty &\les t^{-\frac{n-1}{2}}\Big( \|u_0\|_{\dot W^{\frac{n+1}{2},1}} + \|u_1\|_{\dot W^{\frac{n-1}{2},1}}\Big) \\
\| u(t,\cdot)\|_\infty &\les t^{-\frac{n-1}{2}}\Big( \|u_0\|_{\dot B^{\frac{n+1}{2}}_{1,1}} + \|u_1\|_{\dot B^{\frac{n-1}{2}}_{1,1}}\Big) \\
\end{aligned}
\end{equation}
in odd and even dimensions, respectively.
Another instance of
such decay bounds are the global Strichartz estimates
\beeq\label{eq:strich} \| e^{it\Delta}
f\|_{L^{2+\frac{4}{n}}(\R^{n+1})} \less \|f\|_{L^2(\R^n)} \eneq
\nind and mixed-norm variants thereof as well as the corresponding versions for the wave equation.

In this paper we establish a
decay estimate (valid for all $t$), similar to \eqref{eq:disp}, for
the Schr\"odinger and wave evolution on a class of non-compact
manifolds which exhibit trapping of the Hamiltonian flow.
There has been much activity around establishing dispersive and
Strichartz estimates for more general operators, namely for
Schr\"odinger operators of the form $H=-\Delta+V$ with a decaying
potential $V$ or even more general perturbations. The seminal papers
here are Rauch\cite{Rau}, Jensen-Kato\cite{JK}, and Jorne\'e-Soffer-Sogge\cite{JSS}.
 We refer the reader to
the survey~\cite{Sch2} for more recent references in this area.

Around the same time as \cite{JSS}, Bourgain\cite{Bou} found
Strichartz estimates on the torus. This is remarkable, as compact
manifolds do not exhibit dispersion as in~\eqref{eq:disp} which was
always considered a key ingredient of the $T^*T$ argument leading to~\eqref{eq:strich}. The theme
of Strichartz estimates on manifolds (both local and global in time)
was then developed further in several important papers, see
Smith-Sogge\cite{SS}, Staffilani-Tataru\cite{ST},
Burq-Gerard-Tzvetkov\cite{BGT1}, \cite{BGT2},
Hassel-Tao-Wunsch\cite{HTW1}, \cite{HTW2}, Robbiano-Zuily\cite{RZ},
and Tataru\cite{T}. Gerard\cite{G} reviews some of the recent work
in this field.

A recurring theme in this area is the importance of periodic geodesics
for Strichartz estimates. In fact, it is well-known that the
presence of periodic geodesics  can lead to a loss of
derivatives in the Strichartz bounds. The intuition here is that
initial data that are highly localized around a periodic geodesic and
possess high momentum traveling around this geodesic will lead to
so-called meta-stable states in the Schr\"odinger evolution {\em provided the geodesic is stable} as
for example on spheres. Metastable states remain ``coherent'' for a long time, which amounts
to absence of dispersion during that time, see for example~\cite{G}
(in the classical approximation, dispersive estimates are governed
by the Newtonian scattering trajectories --- classically speaking,
periodic geodesics are states that do not scatter).

For this reason, many authors have imposed explicit non-trapping
conditions, see \cite{SS}, \cite{HTW1}, \cite{HTW2}, \cite{RodTao}.
The relevance of this condition lies with the construction of a
parametrix, which always involves solving for suitable
bi-characteristics. On manifolds these bi-characteristics are
governed by the geodesics flow in the co-tangent bundle - hence the
relevance of periodic geodesics.

There is a large body of work on the so-called Kato smoothing
estimates where this non-trapping condition also features
prominently, see for example Craig-Kappeler-Strauss\cite{CKS},
Doi\cite{Doi}, and Rodnianski-Tao\cite{RodTao}.

We now define the class of asymptotically conical
manifolds $\calM$ that we shall be working with.

\begin{definition}
\label{def:cones} Let $\Omega\subset \R^N$ with metric $ds_\Omega^2$
be a $d$-dimensional compact imbedded Riemannian manifold and define the $(d+1)$-dimensional manifold
\[
\calM:=\{(x,r(x)\omega)\:|\: x\in\R,\; \omega\in\Omega\},\quad
ds^2=r^2(x)ds_{\Omega}^2 +(1+r'(x)^2)dx^2
\]
where $r\in C^\infty(\R)$ and $\inf_x r(x)>0$. We say that there is
a {\em conical end} at the right (or left) if
 \begin{equation}\label{eq:cone} r(x)=|x|\,(1+ h(x)),\quad
  h^{(k)}(x) =O(x^{-2-k}) \quad \forall\; k\geq 0
  \end{equation}
as $x\to\infty \; (x\to-\infty)$.
\end{definition}

Of course we can consider cones with arbitrary opening angles here but this adds
nothing of substance. Furthemore, the regularity assumption can be relaxed to finitely many derivatives,
but we do not comment on this issue any further.
With $\Omega=S^1$ the manifold $\calM$ reduces to a  surface of
revolution
\begin{equation*}
\calS = \{(x,r(x)\cos\theta,r(x)\sin\theta ):-\infty< x<\infty
,0\leq\theta\leq 2 \pi\}
\end{equation*}
with the metric $ds^2=r^2(x)d\theta^2+(1+r'(x)^2)dx^2$. It has a
periodic geodesic at all local extrema of $r$.  An example of such a
manifold is given by the one-sheeted hyperboloid: $\Omega=S^1$ and
$r(x)=\sqrt{1+|x|^2}=:\langle x\rangle$.
If $d\ge2$, the entire Hamiltonian flow on~$\calM$ is trapped on the
set $(x_0,r(x_0)\Omega)$ when $r'(x_0)=0$.

In what follows, $\{Y_n,\mu_n\}_{n=0}^\infty $
denote the $L^2$-normalized eigenfunctions and eigenvalues, respectively,  of~$-\Delta_\Omega$.
In other words, $-\Delta_\Omega Y_n = \mu_n^2 Y_n$ where $0=\mu_0^2< \mu_1^2\le \mu_2^2\le\ldots$

\begin{theorem}\label{thm1}
Let $\calM$ be asymptotically conical at both ends in the sense of
Definition~\ref{def:cones} with $d\ge1$ arbitrary. Then for all $t>0$ and all $n\ge0$,
\begin{align} \Vert
e^{it\Delta_{\calM}}\, Y_nf\Vert_{L^{\infty}(\calM )} &\le C(n,\calM)\,
t^{-\frac{d+1}{2}}\Vert f\Vert_{L^1(\calM )} \label{eq:schr_d}\\
\Vert e^{\pm it\sqrt{-\Delta_{\calM}}}\, Y_nf\Vert_{L^{\infty}(\calM )} &\le C(n,\calM)\,
t^{-\frac{d}{2}}\Big(\Vert f'\Vert_{L^1(\calM )} + \Vert
f\Vert_{L^1(\calM)}  \Big) \label{eq:wave_d}
\end{align}
provided $f=f(x)$ does not depend on~$\omega$.
\end{theorem}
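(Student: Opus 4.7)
The plan is to reduce the claim on $\calM$ to a weighted dispersive kernel bound for a one--dimensional Schr\"odinger operator on the line with an inverse-square potential at infinity, and then to establish that kernel bound by combining a distorted Fourier decomposition with (non)stationary--phase estimates -- precisely ingredients (A), (B) announced in the abstract. I concentrate on the Schr\"odinger estimate with $d=1,\,n=0$, the case treated in the present paper; the wave estimate \eqref{eq:wave_d} follows by the same scheme with phase $e^{\pm it\lambda}$ in place of $e^{-it\lambda^2}$, one $\xi$-derivative on the data being absorbed by a single integration by parts (this accounts for the $\|f'\|_1$ term). Writing $f=Y_n(\omega)u(x)$, separation of variables gives $e^{it\Delta_\calM}(Y_nu)=Y_n\,e^{itH_n}u$ where
$$H_n u \;=\; \frac{1}{r^d\sqrt{1+r'^2}}\,\partial_x\!\Big(\tfrac{r^d}{\sqrt{1+r'^2}}\,\partial_x u\Big)\;-\;\frac{\mu_n^2}{r^2}\,u;$$
passage to the arc-length $\xi$ (defined by $d\xi=\sqrt{1+r'^2}\,dx$, with $\tilde r(\xi):=r(x(\xi))$) followed by conjugation by $\tilde r^{d/2}$ gives a unitary identification of $-H_n$ (on the $Y_n$-sector of $L^2(\calM)$) with
$$H\;:=\;-\partial_\xi^2+V_n(\xi)\qquad\text{on }L^2(\R,d\xi),$$
where by \eqref{eq:cone} $V_n\in C^\infty(\R)$ decays as $V_n(\xi)=c_{n,d}\,\xi^{-2}+O(\xi^{-3})$ as $\xi\to\pm\infty$, with $c_{0,1}=-\tfrac14$ -- the borderline attractive inverse-square potential of ingredient~(A).

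\emph{Reduction to a weighted kernel estimate.} Since $\|f\|_{L^\infty(\calM)}=\|u\|_{L^\infty_x}$ while the $\calM$-volume carries an extra $r^d$ factor, substituting $v=\tilde r^{d/2}u$ and passing to the $\xi$-coordinate rewrites the target bound \eqref{eq:schr_d} for $(d,n)=(1,0)$ as the pointwise inequality
$$|K(t,\xi,\eta)|\;\les\;t^{-1}\,\tilde r(\xi)^{1/2}\,\tilde r(\eta)^{1/2}$$
on $K(t,\xi,\eta)$, the integral kernel of $e^{-itH}$. I emphasize that this is \emph{stronger} than the generic one-dimensional Schr\"odinger dispersion $t^{-1/2}$, so the extra factor of $t^{-1/2}$ and the polynomial growth $\tilde r^{1/2}$ both have to be dug out of the Jost solutions themselves.

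\emph{Spectral representation and oscillatory--integral bounds.} Construct outgoing Jost solutions $f_\pm(\xi,\lambda)$ of $(-\partial_\xi^2+V_n)\psi=\lambda^2\psi$ with Sommerfeld asymptotics as $\xi\to\pm\infty$. Because $V_n$ has only $\xi^{-2}$ decay (borderline long-range), standard short-range Jost theory does not apply; instead $f_\pm$ must be built by perturbation off the exact-cone model, in which the solutions are explicit Hankel functions of order $\sqrt{c_{n,d}+1/4}$, with uniform control of the joint behaviour in $(\xi,\lambda)$ right down to the spectral threshold $\lambda=0$. The transmission and reflection coefficients, together with the Wronskian, then produce the spectral representation
$$K(t,\xi,\eta)\;=\;\int_0^\infty e^{-it\lambda^2}\,e(\xi,\eta,\lambda)\,d\lambda.$$
Ingredient~(B) now takes over: split $\lambda$ at $\lambda\sim t^{-1/2}$; subdivide $(\xi,\eta)$ further according to whether $\lambda|\xi|$ and $\lambda|\eta|$ are small or large (the ``Bessel'' vs.~``WKB'' regimes); apply (non)stationary phase in $\lambda$ around the single critical point $\lambda_\ast=(\xi-\eta)/(2t)$. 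Repeated integration by parts handles the non-stationary contributions, classical stationary phase supplies one factor $t^{-1/2}$, and the residual $t^{-1/2}\tilde r^{1/2}$ is produced by the $(\lambda\tilde r)^{-1/2}$ amplitude decay of the WKB factors of $f_\pm$.

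\emph{Main obstacle.} The technical heart of the argument is the spectral analysis of $H$: obtaining expansions of $f_\pm(\xi,\lambda)$ that are uniform as $(\xi,\lambda)\to(\infty,0)$ and sharp enough for the oscillatory integrals to deliver the \emph{full} $t^{-1}$ rather than only the generic one-dimensional $t^{-1/2}$. The critical value $c_{0,1}=-\tfrac14$, at which the two indicial roots of the zero-energy equation coalesce and logarithmic corrections enter the Jost solutions, makes this low-energy analysis especially delicate. Once $f_\pm$ are understood with the required precision, the oscillatory-integral bookkeeping becomes systematic.
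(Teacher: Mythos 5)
Your outline reproduces the paper's strategy: arc-length coordinates plus conjugation by $r^{d/2}$ to get $\calH=-\partial_\xi^2+V$ with $V(\xi)=-\tfrac14\xi^{-2}+O(\xi^{-3})$ for $d=1,n=0$; Jost solutions built by a Volterra perturbation off the exact inverse-square model (Hankel functions of order zero); the weighted kernel bound $\les t^{-1}(\la\xi\ra\la\xi'\ra)^{1/2}$; and a decomposition in $\lambda|\xi|,\lambda|\xi'|$ with (non)stationary phase. So the approach is the right one. But as a proof it stops exactly where the paper's work begins. The ``main obstacle'' you name --- uniform low-energy control of $f_\pm$ --- is not an obstacle you then overcome; it is the content of Section~3: the expansions $a_+(\lambda)=2^{1/4}c_0\sqrt{\lambda}(1+ic_1\log\lambda+ic_3)+O(\lambda^{1-\eps})$, $b_+(\lambda)=i2^{-1/4}c_0c_1\sqrt{\lambda}+O(\lambda^{1-\eps})$, hence $W(\lambda)=2\lambda(1+ic_3+i\tfrac2\pi\log\lambda)+O(\lambda^{3/2-\eps})$, together with differentiated versions and the representation $f_\pm=a_\pm u_0(\cdot,\lambda)+b_\pm u_1(\cdot,\lambda)$ with symbol-type error control. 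Without these the ``systematic bookkeeping'' you invoke cannot even be set up: one needs $\lambda/W(\lambda)$ bounded with derivative of size $1/(\lambda\log^2\lambda)$ (barely integrable), and one needs the Volterra iterates computed with explicit cancellation (Corollary~\ref{cor4}) precisely to avoid $\log\xi$ losses that a crude absolute-value iteration produces and that would destroy the $t^{-1}$ bound.

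A second, more pointed issue: your explanation of where the extra $t^{-1/2}$ comes from is not correct as stated. In the oscillatory regime $|\xi\lambda|>1$ the normalized Jost factors satisfy $m_\pm\to1$, so there is no ``$(\lambda\tilde r)^{-1/2}$ amplitude decay of the WKB factors'' to exploit; the kernel amplitude there is simply $\sim(\la\xi\ra\la\xi'\ra)^{-1/2}$. The upgrade from the generic $t^{-1/2}$ to $t^{-1}$ comes from coupling the weights to the location of the stationary point $\lambda_0=-(\xi-\xi')/(2t)$: when $\lambda_0$ lies in the support of the cutoffs one has $\xi|\xi'|\gtrsim t$, so $t^{1/2}(\la\xi\ra\la\xi'\ra)^{-1/2}\les1$, while in the low-energy regime the smallness $|f_\pm|\les\sqrt{\lambda\la\xi\ra}\,|\log\lambda|$ against $|W(\lambda)|\gtrsim\lambda|\log\lambda|$ does the job (Lemmas~\ref{lemma12}--\ref{lemma15}). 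Relatedly, your wave-equation remark is only half the story: for $d=1$ the low-energy pieces need no derivative of the data (a single integration by parts in $\lambda$ gives $(1+|t\pm(\xi-\xi')|)^{-1}\les t^{-1/2}$ using the weights), and the $\|f'\|_1$ term is forced only by the high-energy piece, where the symbol does not decay in $\lambda$ and one must integrate by parts in $\xi'$, exploiting the cancellation $W(-\lambda)=-W(\lambda)+O(1)$ (Lemma~\ref{lem:wave_crux}). These are the steps a complete argument must supply.
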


We remark that in the flat case, i.e., $r=\const=1$ the evolutions factor into those on~$\Omega$ and $\R$
and the dispersive rates are of course the same as on~$\R$. As for the wave equation, \eqref{eq:wave_d} gives
the natural estimate for $\cos(t\sqrt{-\Delta_\calM})$ --- the number of derivatives appearing on the right-hand
side agrees with that in~\eqref{eq:wave_dispRn} when $n=1$ since $f$ really sees the evolution  along
a one-dimensional generator the ``missing'' angular derivatives being hidden in $C(n,\calM)$. For $\frac{\sin(t\sqrt{-\Delta_{\calM}})}{\sqrt{-\Delta_{\calM}} }$ one can
prove the stronger bound which only requires~$L^1$ data, but we do not elaborate on this here.

In this paper we only prove the case $d=1, n=0$. In  Part~II we consider the general case. It turns out that the all cases subsumed in
$d+n>1$ follow very much the same scheme whereas $d+n=1$ has some separate features. This is to be expected, as for  $N=2$
the dispersive estimates for $-\Delta_{\R^N}+V$ are quite different from those in $\R^N$ with $N\ge3$, compare \cite{Sch} to~\cite{JSS}.
This is due to the logarithmic singularity of $(-\Delta_{\R^2}-z)^{-1}$ at $z=0$ as compared to the boundedness of the resolvent when $N\ge3$.
Not surprisingly, the logarithmic issues reappear in Part~I but not in Part~II of this series.

We now briefly describe the main ideas behind the proofs of Theorem~\ref{thm1}.
First, using arc-length coordinates $\xi$ on $\calM$ and after
multiplying by the weight $r^{\frac{d}{2}}(\xi)$, we reduce matters to
the Schr\"odinger operator
\[ \calH_{d,n}:=-\partial_\xi^2 + \frac{\mu_n^2}{r^2(\xi)} + V_1(\xi) =: -\partial_\xi^2 + V(\xi) \]
on $\R_\xi$.  Here $V_1(\xi)$ is a smooth potential that behaves like
$\frac{1}{4}d(d-2)\xi^{-2}$ as $\xi\to\pm\infty$. If $d=1, n=0$, then $V(\xi)\sim -\frac{1}{4\xi^2}$ as $\xi\to\infty$ (it is
therefore an attractive potential),
whereas for $d+n>1$ the potential $V$ becomes repulsive (in fact, very much so as $n$ and $d$ increase).  On the one hand, this
difference  accounts for the separate treatment of $d+n=1$ here as opposed to part~II. On the other hand, since
\[
V(\xi) = \big[2\mu_n^2 + d(d-2)/4\big] \xi^{-2} + O(\xi^{-3})\qquad \text{\ \ as\ \ }|\xi|\to\infty,
\]
with a positive leading term when $d+n>1$, it is reasonable that the cases $d+n>1$ can be treated simultaneously.

In order to prove
our theorems, we express the resolvent kernel as \[
(\calH_{d,n}-(\lambda^2+i0))^{-1}(\xi,\xi')= \frac{f_+(\xi,\lambda)
f_{-}(\xi',\lambda)}{W(\lambda)} \] when $\xi>\xi'$. Here $f_{\pm}$
are the usual Jost solutions for $\calH_{d,n}$ at energy $\lambda^2$:
\[
\calH_{d,n} f_{\pm}(\cdot,\lambda) = \lambda^2 f_{\pm}(\cdot,\lambda), \qquad f_{\pm}(\cdot,\lambda)
\sim e^{i\xi\lambda}\text{\ as\ }\xi\to\pm\infty
\]
and
\[
W(\lambda) = W(f_-(\cdot,\lambda), f_+(\cdot,\lambda))
\]
is their Wronskian.

Let us now briefly recall what is know about the existence of the Jost solutions and the asymptotic behavior
of $W(\lambda)$ for general operators $\calH=-\partial_\xi^2 + V$, see for example  Deift-Trubowitz~\cite{DT} for these elementary facts of scattering theory:
 for potentials $V(\xi)$
satisfying $\langle\xi\rangle V(\xi)\in L^1(\R)$ the Jost solutions
exist and are continuous in $\lambda\in\R$ (in fact, they are
continuous in $\lambda\ne0$ under the weaker condition $V\in L^1$). Moreover, $W(\lambda)\sim 2i\lambda$
as $\lambda\to\infty$ and either $W(0)\ne0$ or $W(\lambda)\sim c\lambda$ as $\lambda\to0$. The former case is
said to be {\em nonresonant} whereas the latter is {\em resonant}; it occurs exactly if there is globally bounded nonzero solution
to $\calH f=0$. In the nonresonant case, $f\sim 1$ as $\xi\to\infty$ then necessarily implies that $f(\xi)$ grows linearly in $\xi$
as $\xi\to-\infty$.

In the case of an inverse square potential the behavior of $f_{\pm}(\cdot,\lambda)$ and thus also of $W(\lambda)$ as $\lambda\to0$
is radically different. Assuming for simplicity that the leading order asymptotic behavior of $V(\xi)$ is the same as $\xi\to\pm\infty$ (as it
is here) we single out two possible scenarios which emerge from our analysis: first, suppose that
\[
V(\xi) = (\nu^2-\frac14)\xi^{-2} + O(\xi^{-3})\text{\ \ as\ \ }\xi\to\infty
\]
where $\nu>0$ (the case $\nu=0$ differing by logarithmic corrections). Then either
$W(\lambda)\sim c\lambda^{1-2\nu}$ or  $W(\lambda)\sim c \lambda^\sigma$ for some $\sigma<1-2\nu$ as $\lambda\to0$.
Loosely speaking, the former can be viewed as an analogue of the {\em nonresonant}
case from the usual scattering theory whereas the latter is the {\em resonant} case.
The resonant case is characterized by the existence of a nonzero solution $u$ of $\calH u=0$ with asymptotic behavior $\xi^{\frac12-\nu}$ as $\xi\to\infty$
and $c\, |\xi|^{\frac12-\nu}$ as $\xi\to-\infty$ where $c\ne0$.
Note that in the special case $\nu=\frac12$, which puts us back in the $\la \xi\ra V\in L^1$ scenario, this is exactly the standard characterization
of a zero energy resonance: there exists a nontrivial globally bounded zero energy solution.
In the resonant case one might expect $\sigma=1$, but our analysis does not yield that conclusion.

To conclude this introduction, let us recall the well-known heuristic principle that the behavior of the spectral measure close
to zero energy is the decisive fact for the long term behavior of any wave evolution. Indeed, with $E$ being the spectral resolution of $\calH_{d,n}$,
\[
e^{it\calH_{d,n}} = \int_0^\infty e^{it\lambda} E(d\lambda)
\]
Thus, decay of this Fourier transform as $t\to\infty$ is reflected most strongly by the behavior of $E(d\lambda)$ around $\lambda=0$.
This of course explains the importance of analyzing $W(\lambda)$ close to $\lambda=0$.

We now describe the proof method in more detail.

\section{The basic setup}
\label{sec:setup}

\nind The Laplace-Beltrami operator on $\calM$ where the base
$\Omega$ is of dimension $d\ge1$, is

\beeq\label{eq:LapBelt}
\Delta_{\calM}=\frac{1}{r^d(x)\sqrt{1+r'(x)^2}}\,
\partial_x\left(\frac{r^d(x)}{\sqrt{1+r'(x)^2}}
\partial_x\right)+\frac{1}{r^2(x)}\Delta_\Omega
\eneq We  switch to arclength parametrization. Thus, let
$$\xi (x)=\int_0^x\sqrt{1+r'(y)^2}\, dy.$$
\noindent Then (\ref{eq:LapBelt}) can be written as
\beeq\label{eq:LapBelt2}
\Delta_{\calM}=\frac{1}{r^d(\xi)}\partial_{\xi}(r^d(\xi
)\partial_{\xi})+\frac{1}{r^2(\xi )}\Delta_\Omega \eneq \noindent
where we have abused notation: $r(\xi )$ instead of $r(x(\xi ))$.
Setting $\rho (\xi ):=\frac{d}{2}\frac{\dot{r}(\xi )}{r(\xi )}$
yields \beeq\label{2.1}\Delta_{\calM}\,y(\xi ,\omega )=
\partial^2_{\xi}y+2\rho\partial_{\xi}y+\frac{1}{r^2}\Delta_\Omega y.\eneq
We remove the first order term in \eqref{2.1} by setting
\beeq\label{3} y(\xi ,\omega )=r(\xi )^{-\frac{d}{2}}u(\xi ,\omega).
\eneq
 Then \beeq\label{eq:4} \Delta_\calM y =
\partial^2_{\xi}y+2\rho\partial_{\xi}y+\frac{1}{r^2}\Delta_{\Omega} y=r^{-d/2}[-\calH u+\frac{1}{r^2}\Delta_\Omega u]
\eneq \noindent with \beeq\label{eq:calH}\calH=-\partial^2_{\xi}+V,\quad  V(\xi )=\rho^2(\xi
)+\dot{\rho}(\xi ). \eneq Note that
the Schr\"odinger operator $\calH$ can be factorized as
\begin{equation}\label{eq:calL}
\calH =\calL^*\calL,\quad \calL = -\frac{d}{d\xi} + \rho
\end{equation}
In particular, $\calH$ has no negative spectrum. In terms of the
Schr\"odinger evolution,
\[
e^{-it\Delta_\calM}f = r^{-\frac{d}{2}} e^{it \calH} r^{\frac{d}{2}}
f \quad \forall\; f=f(\xi)
\]
and the same for the wave equation. In particular, any estimate of
the form
\[
\big\| e^{-it\Delta_\calM} f\|_{L^\infty(\calM)} \le Ct^{-\alpha}
\|f\|_{L^1(\calM)} \quad \forall\; t>0,\; f=f(\xi)
\] with arbitrary $\alpha\ge0$ and some constant $C$ that does not depend on~$t$,
is equivalent to one of the form
 \beeq\label{11}
\big\|r^{-\frac{d}{2}}e^{it\calH}r^{-\frac{d}{2}}u\big\|_{L^{\infty}(\R
)} \le C'\, t^{-\alpha} \|u\|_{L^1(\R)} \quad \forall\; t>0,\;
u=u(\xi)\eneq with a possibly different constant $C'$. Here we
absorbed the weight from the volume element $dv_\calM = r^d d\xi
dv_\Omega$ arising in the $L^1(\calM)$ norm into the left-hand side
of~\eqref{11}. An analogous reduction is of course valid for the
wave evolution.  As usual, the functional calculus applied
to~\eqref{11} yields
\[
e^{it\calH} = \int_0^\infty e^{it\lambda} E(d\lambda)
\]
where $E(d\lambda)$ is the spectral resolution of~$\calH$. The point
is that there is an ``explicit expression'' for~$E(d\lambda)$:
\[
E(d\lambda^2)(\xi,\xi') = 2\lambda \Big\{
\Im\Big[\frac{f_+(\xi,\lambda)f_-(\xi',\lambda )}{W(\lambda
)}\Big]\chi_{[\xi>\xi']} + \Im
\Big[\frac{f_-(\xi,\lambda)f_+(\xi',\lambda )}{W(\lambda )}\Big]
\chi_{[\xi<\xi']} \Big\}\,d\lambda
\]
where \[ W(\lambda ):=W(f_-(\cdot ,\lambda),f_+(\cdot ,\lambda))=
f_+'(\cdot ,\lambda) f_-(\cdot ,\lambda) - f_-'(\cdot ,\lambda)
f_+(\cdot ,\lambda)
\] is the Wronskian of the solutions $f_{\pm}(\cdot ,\lambda )$ of
the following ordinary differential equation
 \beeq
\begin{aligned}\label{13}
\calH f_{\pm}(\xi ,\lambda )&=-f_{\pm}''(\xi ,\lambda )+V(\xi
)f_{\pm}(\xi ,\lambda ) =\lambda^2\, f_{\pm}(\xi ,\lambda)\\
f_{\pm}(\xi ,\lambda ) &\sim e^{\pm i\lambda\xi}\qquad \mathrm{as}\;
\xi\to\pm\infty
\end{aligned}
\eneq
 \noindent provided $\lambda\neq 0$. The functions $f_{\pm}$
are called the {\em Jost solutions} and it is a standard fact that
these solutions exist because of the decay of~$V$ which turns out to
be
$$|V(\xi )|\les\langle\xi\rangle^{-2}.$$  To establish this, as well
as an important refinement thereof, we start with the following
elementary consequence of Definition~\ref{def:cones}.
\begin{definition}
In what
follows, a term $O(x^{-\gamma})$ is said to {\em behave like a
symbol} if $|\partial_x^\ell O(x^{-\gamma})|\les x^{-\gamma-\ell}$
as $x\to\infty$ for all $\ell\ge1$.
\end{definition}
Furthermore, we shall assume
henceforth that both ends of~$\calM$ are conical, i.e.,
\eqref{eq:cone} holds.

\begin{lemma}
  \label{lem:basic_asymp} With suitable constants  $c_{\infty},\tilde
c_\infty$, and  as $x\to \infty$ \beeq\label{7} \xi (x)=
\sqrt{2}\,x+c_{\infty}+O(x^{-1})
 \eneq
as well as  \beeq\label{29} r(\xi )=
\frac{1}{\sqrt{2}\,}\xi
\left(1-\frac{c_{\infty}}{\xi}+O(\xi^{-2})\right)
 \eneq  as $\xi\to\infty$.
Moreover, the $O$-terms behave like symbols.
\end{lemma}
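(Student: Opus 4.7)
The plan is to prove the two asymptotic formulas in turn, using the defining integral for arclength together with the symbol-type bounds on $h$ built into Definition~\ref{def:cones}, and then invert to pass from $x$-variable to $\xi$-variable.

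First, from $r(x)=x(1+h(x))$ with $h^{(k)}(x)=O(x^{-2-k})$ for all $k\ge0$, a direct computation gives
\[
 r'(x)=1+h(x)+xh'(x)=1+O(x^{-2}),
\]
with the $O$-term a symbol. Squaring, $1+r'(x)^2=2+O(x^{-2})$ as a symbol, and Taylor-expanding the square root yields
\[
 \sqrt{1+r'(x)^2}=\sqrt2+O(x^{-2})
\]
again with symbol behavior in the error. Integrating from $0$ to $x$ and splitting off the divergent part,
\[
 \xi(x)=\sqrt2\,x+\int_0^x\!\bigl[\sqrt{1+r'(y)^2}-\sqrt2\,\bigr]\,dy.
\]
Because the integrand is $O(y^{-2})$ at infinity and bounded near $y=0$, the improper integral converges; call its value $c_\infty$. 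Writing $\int_0^x=\int_0^\infty-\int_x^\infty$ and bounding the tail by $\int_x^\infty O(y^{-2})\,dy=O(x^{-1})$ gives \eqref{7}. The symbol bounds on the $O(x^{-1})$ term follow by differentiating under the integral: $\frac{d}{dx}[\xi(x)-\sqrt2\,x-c_\infty]=\sqrt{1+r'(x)^2}-\sqrt2=O(x^{-2})$ as a symbol, and higher derivatives inherit the symbol property likewise.

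Next I would invert \eqref{7}. Since $\xi'(x)=\sqrt2+O(x^{-2})$ is bounded away from zero for $x$ large, the inverse function $x(\xi)$ is smooth for $\xi$ large and satisfies
\[
 x(\xi)=\frac{\xi-c_\infty}{\sqrt2}+O(\xi^{-1}),
\]
the error again a symbol in $\xi$ (the chain rule preserves the symbol class since $\xi'(x)\sim\sqrt2$ is bounded and its derivatives are symbols). Substituting this into $r(x)=x+xh(x)=x+O(x^{-1})$ (using $h(x)=O(x^{-2})$) gives
\[
 r(\xi)=x(\xi)+O(x(\xi)^{-1})=\frac{\xi}{\sqrt2}-\frac{c_\infty}{\sqrt2}+O(\xi^{-1})=\frac{\xi}{\sqrt2}\Bigl(1-\frac{c_\infty}{\xi}+O(\xi^{-2})\Bigr),
\]
which is exactly \eqref{29}. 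The symbol bounds on the error transfer through the composition, since both $x\mapsto xh(x)$ and $x(\xi)$ are symbols of the appropriate orders.

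No step is really an obstacle here: the entire argument is elementary calculus once the symbol class is set up. The only point that requires a little care is the bookkeeping showing that the $O$-terms remain symbols after (i) taking a square root, (ii) integrating from $0$ to $\infty$ and cutting off the tail, (iii) inverting a function with nonvanishing derivative at infinity, and (iv) composing. All four operations preserve symbol-type decay by standard arguments (Faà di Bruno for composition, the implicit function theorem for inversion), so the proof reduces to carrying out these manipulations cleanly.
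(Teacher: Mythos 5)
Your proof is correct and follows essentially the same route as the paper: expand $\sqrt{1+r'(x)^2}=\sqrt2+O(x^{-2})$ using the symbol bounds on $h$, integrate and split off the convergent constant $c_\infty$ with an $O(x^{-1})$ tail, then invert $\xi(x)$ and substitute into $r(x)=x+O(x^{-1})$ to get \eqref{29}. Your extra bookkeeping on symbol preservation under square roots, tail integration, inversion, and composition is exactly the content the paper compresses into its final sentence.
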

\begin{proof}
We plug $r(x)=x(1+O(x^{-2}))$ and thus $r'(x)=1+O(x^{-2})$ into the
expression for~$\xi$, i.e.,
\begin{align*}
\xi(x)&=\int_0^x \sqrt{2+O(\la y\ra^{-2})}\, dy = \sqrt{2}\, x +
\int_0^x O(\la y\ra^{-2})\, dy \\
&= \sqrt{2}\, x + \int_0^\infty  O(\la y\ra^{-2})\, dy + O(x^{-1}) =
\sqrt{2}\, x + c_\infty + O(x^{-1})
\end{align*}
Hence,
\[
r(x) = x + O(x^{-1}) = 2^{-\frac12}\, (\xi-c_\infty) + O(\xi^{-1})
\]
as claimed.  The symbol behavior follows from the fact that the
errors in Definition~\ref{def:cones} also behave like symbols.
\end{proof}

As a corollary, we obtain

\begin{cor}
  \label{cor:Vdec} The potential $V$ from~\eqref{eq:calH} has the form
 \beeq\label{14} V(\xi
)= \Big( \frac{d^2}{4} - \frac{d}{2}\Big) \xi^{-2}
+O(\xi^{-3})\qquad \mathrm{as}\; \xi \to\infty \eneq where
$O(\xi^{-3})$  behaves
 like a symbol.
\end{cor}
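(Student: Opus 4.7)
The plan is to compute $V = \rho^2 + \dot\rho$ directly from the asymptotic expansion of $r(\xi)$ supplied by Lemma~\ref{lem:basic_asymp}, treating all error terms as symbols so that each differentiation lowers the exponent by one.

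Concretely, I would first rewrite the conclusion of Lemma~\ref{lem:basic_asymp} as
\[
r(\xi) = \tfrac{1}{\sqrt 2}\,\xi - \tfrac{1}{\sqrt 2}\,c_\infty + O(\xi^{-1}),
\]
so that differentiating gives $\dot r(\xi) = \tfrac{1}{\sqrt 2} + O(\xi^{-2})$; the symbol behavior of the $O$-term is preserved because differentiating a symbol of order $-\gamma$ yields one of order $-\gamma-1$. Taking the ratio, I would expand
\[
\frac{\dot r(\xi)}{r(\xi)} = \frac{1}{\xi}\,\frac{1 + O(\xi^{-2})}{1 - c_\infty/\xi + O(\xi^{-2})} = \frac{1}{\xi} + \frac{c_\infty}{\xi^2} + O(\xi^{-3}),
\]
where the geometric-series expansion is justified for $\xi$ large, and the resulting remainders again behave like symbols since products, quotients, and compositions of symbols are symbols. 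This yields
\[
\rho(\xi) = \frac{d}{2\xi} + \frac{d\,c_\infty}{2\xi^{2}} + O(\xi^{-3}).
\]

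Squaring gives $\rho^2 = \tfrac{d^2}{4}\xi^{-2} + O(\xi^{-3})$, and differentiating gives $\dot\rho = -\tfrac{d}{2}\xi^{-2} + O(\xi^{-3})$. Adding the two produces
\[
V(\xi) = \rho^2(\xi) + \dot\rho(\xi) = \Big(\frac{d^2}{4} - \frac{d}{2}\Big)\xi^{-2} + O(\xi^{-3}),
\]
which is exactly~\eqref{14}. The symbol behavior of the $O(\xi^{-3})$ remainder is inherited from that of the inputs via the Leibniz rule.

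I do not foresee a genuine obstacle here; the only point requiring a modicum of care is bookkeeping the symbol behavior through the reciprocal $1/r$ and through the product $\rho^2$, but once one records the general fact that the class of symbol-type remainders is closed under sums, products, derivatives, and reciprocals (away from zeros), the corollary follows by substitution. An analogous computation handles the $\xi \to -\infty$ end, using the left-hand conical assumption in Definition~\ref{def:cones}.
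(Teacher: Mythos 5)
Your argument is correct and follows essentially the same route as the paper: the paper likewise writes $\rho=\frac{d}{2}\frac{\dot r}{r}=\frac{d}{2}\xi^{-1}(1+O(\xi^{-1}))$ using Lemma~\ref{lem:basic_asymp}, then obtains $\rho^2+\dot\rho=\frac14 d(d-2)\xi^{-2}+O(\xi^{-3})$, with the symbol character of the remainder inherited from the symbol-type errors in that lemma. Your version merely spells out the quotient expansion and the closure of symbol-type remainders under the relevant operations, which the paper leaves implicit.
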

\begin{proof}
  Simply observe that at a conical end, $\rho=\frac{d}{2}\frac{\dot r}{r}= \frac{d}{2}\xi^{-1}(1+O(\xi^{-1}))$
  as $\xi\to\infty$. Hence,
  \[
V(\xi)=\dot\rho(\xi)+\rho^2(\xi) = \frac{1}{4}d(d-2)\xi^{-2} + O(\xi^{-3}) \text{\
\ as\ \ }\xi\to\infty
  \]
  as claimed. The behavior of the $O(\cdot)$ term follows from the
  fact that the $O(\cdot)$ in Lemma~\ref{lem:basic_asymp} are of
  symbol type.
\end{proof}

From \eqref{13}, $f_{\pm}(\cdot ,\lambda)$ are solutions of the {\em
Volterra integral equations} \beeq\label{eq:Volt} f_+(\xi ,\lambda
)=e^{i\lambda\xi}+\int_{\xi}^{\infty}\frac{\sin(\lambda (\eta
-\xi))}{\lambda}V(\eta )f_+(\eta ,\lambda )\, d\eta \eneq
 \noindent
and similarly for $f_-$.  For the convenience of the reader, we now
recall how to solve Volterra integral equations in general. Thus,
consider
\begin{equation*}
(\ast )\,\,\,f(x)= g(x)+\int_{x}^{\infty} K(x,s) f(s) ds,
\end{equation*}
or
\begin{equation*}
(\ast\ast )\,\,\,f(x)= g(x)+\int_{a}^{x} K(x,s) f(s) ds,
\end{equation*}
with some $g(x)\in L^{\infty}$ and $a\in \mathbb{R}$. As usual, one
solves them by an iteration procedure which requires finding a
suitable convergent majorant for the resulting series expansion.

\begin{lemma}\label{4'}
Let $a\in \mathbb{R}$ and $g(x)\in L^{\infty}(a,\infty)$. Let
\[\mu:= \int_{a}^\infty \sup_{a<x<s} |K(x,s)|\, ds<\infty\] Then
there exists a unique solution to $(\ast)$ given by
\begin{equation}
\label{eq:volt_it} f(x) = g(x) + \sum_{n=1}^\infty \int_a^\infty
\ldots \int_a^\infty \prod_{i=1}^n \chi_{[x_{i-1}<x_i]}
K(x_{i-1},x_i) \; g(x_{n}) \, dx_{n}\ldots dx_1.
\end{equation}
with $x_0:=x$.  Furthermore, one has the bound
\[ \|f\|_{L^\infty(a,\infty)} \le e^\mu \|g\|_{L^\infty(a,\infty)}, \]
and an analogue statement holds for $(\ast\ast)$.
\end{lemma}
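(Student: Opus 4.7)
The plan is to treat this as a standard Neumann/Picard iteration argument for the Volterra operator, but with care about where the supremum in $K$ is taken. Define the operator
\[
(Tf)(x) := \int_x^\infty K(x,s)\, f(s)\, ds
\]
acting on $L^\infty(a,\infty)$, and set $M(s) := \sup_{a<x<s}|K(x,s)|$, so that the hypothesis reads $\int_a^\infty M(s)\, ds = \mu < \infty$. The candidate solution is the Neumann series $f = \sum_{n=0}^\infty T^n g$, which when unrolled is exactly the formula \eqref{eq:volt_it} with $x_0 = x$.

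The main step is to bound the $n$-th term of the series pointwise in $x$. Expanding $T^n g$ produces an integral over the simplex $\{x < x_1 < x_2 < \cdots < x_n\}$ of $\prod_{i=1}^n K(x_{i-1},x_i)\, g(x_n)$. Since on this simplex one has $a < x_{i-1} < x_i$, the estimate $|K(x_{i-1},x_i)| \le M(x_i)$ applies, giving
\[
|T^n g(x)| \le \|g\|_\infty \int \cdots \int_{x<x_1<\cdots<x_n} \prod_{i=1}^n M(x_i)\, dx_1\cdots dx_n .
\]
The ordered-simplex integral of a product of one-variable functions equals $\tfrac{1}{n!}$ times the unrestricted integral, so the right-hand side is bounded by $\|g\|_\infty\, \mu^n / n!$. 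Summing in $n$ yields absolute and uniform convergence of the series on $(a,\infty)$ together with the claimed bound $\|f\|_\infty \le e^\mu \|g\|_\infty$.

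That $f$ genuinely solves $(\ast)$ follows by substituting the series into $g + Tf$: the uniform bound permits interchange of sum and integral, and reindexing produces the series back, shifted by one, plus $g$. For uniqueness, if $f_1, f_2$ both solve $(\ast)$ in $L^\infty(a,\infty)$, their difference $h := f_1 - f_2$ satisfies $h = Th$, hence $h = T^n h$ for every $n$, and the same simplex estimate (now applied to $h$ in place of $g$) gives $\|h\|_\infty \le \mu^n\|h\|_\infty / n! \to 0$. For the forward version $(\ast\ast)$, the argument is identical after replacing $M(s)$ by $\sup_{s<x<\infty}|K(x,s)|$ (or, with the hypothesis as stated, by a symmetric version), and reversing the orientation of the simplex.

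The only nontrivial point is the factor $1/n!$: it is essential for overcoming the fact that $\mu$ need not be small, and it arises solely from the simplex geometry, which is why the hypothesis is formulated with the inner supremum in $x$ rather than in $s$. No fixed-point theorem with a smallness assumption is needed.
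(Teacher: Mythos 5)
Your proposal is correct and follows essentially the same route as the paper: the Volterra/Neumann series with the simplex estimate $\int_{x<x_1<\cdots<x_n}\prod_i M(x_i)\le \mu^n/n!$, where $M(s)=\sup_{a<x<s}|K(x,s)|$, giving uniform convergence and the bound $e^\mu\|g\|_\infty$. The only difference is that you spell out the substitution check and the uniqueness argument ($h=T^nh$ forces $h=0$), which the paper leaves implicit.
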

\begin{proof}
We only prove the lemma for $(\ast)$ since the proof for
$(\ast\ast)$ is almost identical. The idea is simply to show that
the infinite Volterra iteration \eqref{eq:volt_it} for $(\ast)$
converges. To this end, define
\[ K_0(s) := \sup_{a<x<s} |K(x,s)| \]
Then
\begin{align*}
& \Big| \int_a^\infty \ldots \int_a^\infty
\prod_{i=1}^n \chi_{[x_{i-1}<x_i]} K(x_{i-1},x_i) \; g(x_{n}) \, dx_{n}\ldots dx_1 \Big| \\
& \le \int_a^\infty \ldots \int_a^\infty
\prod_{i=1}^n \chi_{[x_{i-1}<x_i]} K_0(x_i) \; |g(x_{n})| \, dx_{n}\ldots dx_1  \\
& = \|g\|_{L^\infty(a,\infty)}\frac{1}{n!} \int_a^\infty \ldots
\int_a^\infty
\prod_{i=1}^n  K_0(x_i) \, dx_{n}\ldots dx_1  \\
&= \frac{1}{n!} \|g\|_{L^\infty(a,\infty)} \Big(\int_a^\infty
K_0(s)\, ds\Big)^n
\end{align*}
Hence, the series in \eqref{eq:volt_it} converges absolutely and
uniformly in $x>a$ with the uniform upper bound
\[ \|g\|_{L^\infty(a,\infty)}\sum_{n=0}^\infty \frac{1}{n!}\mu^n = e^\mu\|g\|_{L^\infty(a,\infty)}\]
as claimed.
\end{proof}

It is now clear that~\eqref{eq:Volt} admits a solution for every
$\lambda\ne0$. At $\lambda=0$, we need to replace~\eqref{eq:Volt}
with
\[
f_+(\xi ,0 )=1+\int_{\xi}^{\infty}(\eta -\xi)V(\eta )f_+(\eta ,0)\,
d\eta
\]
 If $d\ne2$, then this integral equation has no
meaning due to the $\eta^{-2}$ decay of~$V(\eta)$, see~\eqref{14}.
Moreover, the zero energy solutions of $\calH u=0$ are given by
\begin{equation}\label{eq:u0u1}\begin{aligned}
u_0(\xi )&=r^{\frac{d}{2}}(\xi ),\\
u_1(\xi )&=r^{\frac{d}{2}}(\xi )\int_0^{\xi}r^{-d}(\eta)\, d\eta,
\end{aligned}
\end{equation}
\noindent see (\ref{eq:ydef}) and (\ref{3}).  Since no linear
combination of these functions can be made asymptotically constant
when $d\ne2$, it follows that (\ref{13}) itself has
no meaning at $\lambda=0$. Note, however,  that for $d=2$
\[r(\xi)\int_\xi^\infty r^{-2}(\eta)\,d\eta\] is asymptotically
constant at a conical end as $\xi\to\infty$ which is in agreement with
the fact that for $d=2$ the potential $V$ decays like an inverse
cubic.

In view of this discussion, we have reduced the decay estimates for
the Schr\"odinger equation to the following
  oscillatory
integral bounds:
 \beeq
\begin{aligned}\label{eq:schr_oscill}
& \sup_{\xi >\xi '} \, r^{-\frac{d}{2}}(\xi)
r^{-\frac{d}{2}}(\xi')\bigg|\int_0^{\infty}e^{it\lambda^2}
\lambda\, \text{Im}\left[\frac{f_+(\xi,\lambda)f_-(\xi',\lambda )}{W(\lambda )}\right]\, d\lambda \bigg|\\
&+\sup_{\xi <\xi '}\, r^{-\frac{d}{2}}(\xi)
r^{-\frac{d}{2}}(\xi')\bigg|\int_0^{\infty}e^{it\lambda^2} \lambda\,
\text{Im}\left[\frac{f_+(\xi',\lambda)f_-(\xi,\lambda )}{W(\lambda
)}\right]\, d\lambda \bigg|\les t^{-(d+1)/2}
\end{aligned}
\eneq For the wave-equation, the reduction takes the form
 \beeq
\begin{aligned}\label{eq:wave_oscill}
& \Big|\int_{-\infty}^\xi r^{-\frac{d}{2}}(\xi)
r^{-\frac{d}{2}}(\xi') \int_0^{\infty} e^{it\lambda}\,\lambda
\, \text{Im}\left[\frac{f_+(\xi,\lambda)f_-(\xi',\lambda )}{W(\lambda )}\right]\, d\lambda\, \phi(\xi')\,d\xi'\Big|\\
&+ \Big|\int_\xi^\infty r^{-\frac{d}{2}}(\xi)
r^{-\frac{d}{2}}(\xi')\int_0^{\infty} e^{it\lambda}\, \lambda
\text{Im}\left[\frac{f_+(\xi',\lambda)f_-(\xi,\lambda )}{W(\lambda
)}\right]\, d\lambda\, \phi(\xi')\, d\xi' \Big|\\
&\les t^{-d/2} \int (|\phi'(\eta)|+|\phi(\eta)|)\, d\eta
\end{aligned}
\eneq
uniformly in $\xi$.

\section{The scattering theory for $d=1, n=0$}
\label{sec:scat1}

The goal of this section is to obtain a sufficiently accurate
representation of $f_{\pm}(\cdot,\lambda)$ in~\eqref{eq:schr_oscill}
and~\eqref{eq:wave_oscill}.  We remark that using
\eqref{eq:LapBelt2}, one obtains two $\omega$ independent harmonic
functions on $\calM$:
\begin{equation}\label{eq:ydef}
y_0(\xi )=1,\quad y_1(\xi )=\int_0^{\xi}r^{-1}(\xi ')\, d\xi '
\end{equation}
At a conical end, $y_1(\xi)=\sqrt{2}\log \xi +O(1)$,
cf.~Lemma~\ref{lem:basic_asymp}. The related functions
$u_0=r^{\frac12}$ and $u_1=r^{\frac12} y_1$ from~\eqref{eq:u0u1} are
zero-energy solutions of $\calH$, see~\eqref{eq:calH}
and~\eqref{eq:calL}. Their asymptotics are as follows (assuming
throughout that $\calM$ is conical at the ends):

\begin{lemma}\label{lemma7}
As $\xi\to\infty$,
\begin{equation}\begin{aligned}\label{eq:u0u1conic}
u_0(\xi )&=2^{-1/4}\xi^{1/2}\Big(1-\frac{c_{\infty}}{2\xi}+O(\xi^{-2})\Big)\\
u_1(\xi
)&=2^{1/4}\xi^{1/2}\Big(1-\frac{c_{\infty}}{2\xi}+O(\xi^{-2})\Big)\Big(\log\xi+c_2+O(\xi^{-1})\Big).
\end{aligned}
\end{equation}
Here $c_2$ is some constant and the $O$-terms behave like symbols
under differentiation in~$\xi$.
\end{lemma}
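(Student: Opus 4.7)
The plan is to derive both asymptotics directly from Lemma~\ref{lem:basic_asymp} by elementary expansion and integration.

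First I would handle $u_0=r^{1/2}$. From \eqref{29} we have $r(\xi)=2^{-1/2}\xi\bigl(1-c_\infty/\xi+O(\xi^{-2})\bigr)$ with the $O$-term of symbol type. Taking the square root and using the binomial expansion $(1+x)^{1/2}=1+x/2+O(x^2)$ applied to $x=-c_\infty/\xi+O(\xi^{-2})$ immediately yields
\[
u_0(\xi)=2^{-1/4}\xi^{1/2}\Big(1-\frac{c_\infty}{2\xi}+O(\xi^{-2})\Big),
\]
and since the expansion of $r$ is preserved under differentiation (symbol-type), so is this one.

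Next, for $u_1=r^{1/2}y_1$ with $y_1(\xi)=\int_0^\xi r^{-1}(\xi')\,d\xi'$, I would expand the integrand. From \eqref{29},
\[
r^{-1}(\xi')=\sqrt{2}\,{\xi'}^{-1}\Big(1+\frac{c_\infty}{\xi'}+O({\xi'}^{-2})\Big)=\frac{\sqrt{2}}{\xi'}+\frac{\sqrt{2}\,c_\infty}{{\xi'}^2}+O({\xi'}^{-3})
\]
for $\xi'$ large, again with symbol-type errors. I would then split $y_1(\xi)=\int_0^{\xi_0}+\int_{\xi_0}^\xi$ for some large fixed $\xi_0$; the first part contributes a constant, while integrating the asymptotic expansion on $[\xi_0,\xi]$ gives $\sqrt{2}\log\xi+\text{const}+O(\xi^{-1})$ (the $\xi'^{-2}$ term integrates to $-\sqrt{2}c_\infty/\xi+\text{const}$, and the $O(\xi'^{-3})$ tail converges absolutely, contributing a constant up to an $O(\xi^{-2})$ remainder). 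Collecting constants into a single $c_2$ (after absorbing the factor $\sqrt{2}$) produces
\[
y_1(\xi)=\sqrt{2}\bigl(\log\xi+c_2+O(\xi^{-1})\bigr).
\]
Multiplying this by the expansion of $u_0$ and using $2^{-1/4}\cdot\sqrt{2}=2^{1/4}$ gives precisely the stated formula for $u_1$.

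Finally, for the symbol-type claim under differentiation: differentiating $y_1$ produces $r^{-1}$, whose asymptotic expansion is symbol-type by Lemma~\ref{lem:basic_asymp}; higher derivatives come from differentiating the expansion of $r^{-1}$ term by term. Combined with the symbol-type behavior of $u_0$ and the product rule, all $O$-terms in the stated expansions satisfy $|\partial_\xi^\ell O(\xi^{-\gamma})|\lesssim \xi^{-\gamma-\ell}$. There is no real obstacle here — the only thing to be careful about is bookkeeping of the constants and making sure the $O(\xi^{-1})$ error in the logarithmic factor really comes from the convergent tail of $\int r^{-1}$ rather than being inflated by cancellation issues; this is handled by splitting the integral at a fixed large $\xi_0$ so that only the tail of an already-convergent expansion enters.
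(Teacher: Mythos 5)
Your proposal is correct and is essentially the paper's own argument: the $u_0$ expansion is read off from Lemma~\ref{lem:basic_asymp}, the asymptotics of $y_1=\int_0^\xi r^{-1}$ are obtained by integrating the expansion of $r^{-1}$ (with the convergent tail absorbed into the constant $c_2$), and $u_1$ follows by multiplying the two expansions, with the symbol-type behavior inherited from Definition~\ref{def:cones}. The only difference is cosmetic bookkeeping (your explicit split at a fixed $\xi_0$), which the paper handles implicitly.
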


\begin{proof} The expressions for $u_0$ are an immediate consequence
of Lemma~\ref{lem:basic_asymp}.  Simply compute
\begin{align*}
\int_0^{\xi}r^{-1}(\eta )\, d\eta &=
\int_0^{\xi}\sqrt{2}\,\langle\eta\rangle^{-1}\big(1+c_{\infty}\langle\eta\rangle^{-1}
+O(\langle\eta\rangle^{-2})\big)\, d\eta\\
&=\sqrt{2}\,(\log\xi +c_2)+O(\xi^{-1})\qquad\mathrm{as}\;
\xi\to\infty.
\end{align*}
Thus,
\begin{align*}
u_1(\xi )&=\sqrt{r(\xi )}\int_0^{\xi}r^{-1}(\eta )\, d\eta\\
&=2^{1/4}\xi^{1/2}\left(1-\frac{c_{\infty}}{2\xi}+O(\xi^{-2})\right)\left(\log\xi+c_2+O(\xi^{-1})\right)\qquad\mathrm{as}\;
\xi\to\infty.
\end{align*}
To symbol character of the $O(\cdot)$ terms here follows from the
fact that it was assumed in Definition~\ref{def:cones}.
\end{proof}

We now perturb the zero energy solutions relative to the energy. For
small energies and in the region $|\xi\lambda|\ll 1$, this produces
a useful approximation to the exact solutions.

\begin{lemma}\label{lemma1}
For any $\lambda\in\R$, define \beeq\label{18} u_j(\xi ,\lambda
):=u_j(\xi
)+\lambda^2\int_0^{\xi}[u_1(\xi)u_0(\eta)-u_1(\eta)u_0(\xi)]u_j(\eta
,\lambda )\, d\eta \eneq \noindent where $j=0,1$.  Then $\calH
u_j(\cdot ,\lambda )=\lambda^2u_j(\cdot ,\lambda )$ with $u_j(\cdot
,0)=u_j(\cdot )$, for $j=0,1$ and \beeq\label{19} W(u_0(\cdot
,\lambda ),u_1(\cdot ,\lambda ))=1 \eneq for all $\lambda$.
\end{lemma}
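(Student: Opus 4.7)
The plan is to handle the four claims in sequence: existence of the function $u_j(\cdot,\lambda)$ defined by~\eqref{18}, the eigenvalue equation $\calH u_j(\cdot,\lambda)=\lambda^2 u_j(\cdot,\lambda)$, the initial condition at $\lambda=0$, and the Wronskian identity. For existence, I would read \eqref{18} as a Volterra integral equation of type $(\ast\ast)$ from Lemma~\ref{4'}, with forcing $g = u_j$ and kernel $K_\lambda(\xi,\eta) = \lambda^2[u_1(\xi)u_0(\eta)-u_1(\eta)u_0(\xi)]$. By Lemma~\ref{lemma7}, both $u_0$ and $u_1$ grow at most like $\xi^{1/2}\log\xi$, so on any bounded interval $[-R,R]$ the kernel is bounded, the iteration~\eqref{eq:volt_it} converges absolutely and uniformly, and produces a unique continuous solution there; exhausting $\R$ by compact intervals gives a unique global solution. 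The identity $u_j(\cdot,0)=u_j(\cdot)$ is immediate from~\eqref{18}.

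For the eigenvalue equation, write $v:=u_j(\cdot,\lambda)$ and set $A(\xi):=\int_0^\xi u_0 v\,d\eta$, $B(\xi):=\int_0^\xi u_1 v\,d\eta$, so that $v = u_j + \lambda^2(u_1 A - u_0 B)$. Differentiating once kills the boundary contribution (the coefficient is $u_1(\xi)u_0(\xi)-u_0(\xi)u_1(\xi)=0$) and gives $v' = u_j' + \lambda^2(u_1' A - u_0' B)$; differentiating a second time produces
\[
v'' = u_j'' + \lambda^2(u_1'' A - u_0'' B) + \lambda^2\, v\,\bigl(u_0(\xi)u_1'(\xi)-u_0'(\xi)u_1(\xi)\bigr).
\]
The last factor is precisely the Wronskian $W(u_0,u_1)$, and a direct computation from~\eqref{eq:u0u1} with $d=1$ yields $W(u_0,u_1)=1$ identically (the two terms $r^{1/2}(r^{1/2})'\int r^{-1}$ cancel, leaving~$1$). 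Using that $u_0, u_1, u_j$ all solve $\calH u=0$, so that $u_0''=Vu_0$, $u_1''=Vu_1$, $u_j''=Vu_j$, the integrals against $A$ and $B$ cancel in $-v''+Vv$, producing the claimed eigenvalue equation. This is the main technical point: tracking the boundary term generated by differentiating the integral, and recognizing that it is exactly $W(u_0,u_1)\cdot\lambda^2 v$ with $W=1$.

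Finally, since $u_0(\cdot,\lambda)$ and $u_1(\cdot,\lambda)$ satisfy the same second-order linear ODE $\calH u = \lambda^2 u$, their Wronskian is constant in~$\xi$. Evaluating at $\xi=0$ --- where both the integral in~\eqref{18} and the analogous boundary-free integral in the formula for $v'$ vanish --- gives $u_j(0,\lambda)=u_j(0)$ and $u_j'(0,\lambda)=u_j'(0)$, so
\[
W\bigl(u_0(\cdot,\lambda),u_1(\cdot,\lambda)\bigr) = W(u_0,u_1)\big|_{\xi=0} = 1,
\]
as required.
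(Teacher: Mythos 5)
Your argument is essentially the paper's own proof: differentiate \eqref{18} twice, recognize the boundary term as $\lambda^2\,W(u_0,u_1)\,u_j(\cdot,\lambda)$ with $W(u_0,u_1)=1$, use $\calH u_0=\calH u_1=\calH u_j=0$, and obtain \eqref{19} from the $\xi$-independence of the Wronskian of two solutions of the same equation together with $u_j(0,\lambda)=u_j(0)$ and $u_j'(0,\lambda)=u_j'(0)$; your additional Volterra-iteration justification of existence and uniqueness for \eqref{18} on compact intervals is correct and is merely left implicit in the paper. One parenthetical remark: with the bracket ordered exactly as printed in \eqref{18}, the identity you correctly derive, $v''=Vv+\lambda^{2}v$, actually yields $\calH v=-\lambda^{2}v$, so the stated sign requires the kernel $u_0(\xi)u_1(\eta)-u_0(\eta)u_1(\xi)$ (equivalently a factor $-\lambda^{2}$); this sign slip is inherited from the paper's formula itself and affects neither your approach nor any of the subsequent size estimates.
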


\begin{proof}
First, one checks that $W(u_0,u_1)=1$.  This yields $\calH u_j(\cdot
,\lambda )=\lambda^2u_j(\cdot ,\lambda )$ since $\calH u_j=0$ for
$j=0,1$.  Second, $u_j(0,\lambda )=u_j(0)$ and $u_j'(0,\lambda
)=u_j'(0)$ for $j=0,1$.  Hence $W(u_0(\cdot ,\lambda ),u_1(\cdot
,\lambda ))=u_1'(0)u_0(0)-u_1(0)u_0'(0)=1$.
\end{proof}

As an immediate corollary we have the following statement.

\begin{corollary}\label{cor1}
There exist $a_+(\lambda )$, $a_{-}(\lambda)$, $b_+(\lambda )$ and $b_-(\lambda
)$ such that with $f_{\pm}(\cdot ,\lambda )$ as in $(\ref{13})$, one has for any
$\lambda\neq 0$
\begin{equation}
\begin{aligned}\label{20}
f_+(\xi ,\lambda )&=a_+(\lambda )u_0(\xi ,\lambda )+b_+(\lambda )u_1(\xi ,\lambda )\\
f_-(\xi ,\lambda )&=a_{-}(\lambda)u_0(\xi ,\lambda )+b_-(\lambda)u_1(\xi ,\lambda ).
\end{aligned}
\end{equation} \noindent Furthermore $a_{\pm}(\lambda )=W(f_{\pm}(\cdot ,\lambda
),u_1(\cdot ,\lambda ))$, $b_{\pm}(\lambda )=-W(f_{\pm}(\cdot
,\lambda ),u_0(\cdot ,\lambda ))$, and
\begin{equation}\label{eq:Wagain}
  W(\lambda):= W(f_{-}(\cdot,\lambda),f_{+}(\cdot,\lambda))=
a_-(\lambda) b_+(\lambda) - a_+(\lambda) b_-(\lambda).
\end{equation}
Moreover, if $\calM$ is symmetric, then $a_-(\lambda )=a_+(\lambda
)$ and $b_-(\lambda )=-b_+(\lambda )$.
\end{corollary}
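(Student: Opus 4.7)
The plan is to exploit that Lemma~\ref{lemma1} provides a fundamental system for the ODE $\calH u = \lambda^2 u$, and then to read off the representation \eqref{20} together with all of its Wronskian consequences from elementary bilinear algebra. Concretely, since $W(u_0(\cdot,\lambda),u_1(\cdot,\lambda))=1\ne0$ by \eqref{19}, the pair $\{u_0(\cdot,\lambda),u_1(\cdot,\lambda)\}$ spans the two-dimensional solution space at energy $\lambda^2$; the Jost solutions $f_{\pm}(\cdot,\lambda)$, which exist and are nonzero for every $\lambda\ne 0$ by the Volterra analysis of Section~2, therefore admit unique decompositions of the form \eqref{20}. Applying $W(\cdot,u_1(\cdot,\lambda))$ and $W(\cdot,u_0(\cdot,\lambda))$ to both sides of \eqref{20} and using $W(u_j,u_j)=0$ together with $W(u_0,u_1)=1$ yields at once $a_{\pm}(\lambda)=W(f_{\pm}(\cdot,\lambda),u_1(\cdot,\lambda))$ and $b_{\pm}(\lambda)=-W(f_{\pm}(\cdot,\lambda),u_0(\cdot,\lambda))$. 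Substituting the two decompositions into $W(f_-,f_+)$ and expanding by bilinearity gives $W(\lambda)=a_-(\lambda)b_+(\lambda)-a_+(\lambda)b_-(\lambda)$, which is \eqref{eq:Wagain}.

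For the symmetry statement, I first translate the symmetry of $\calM$ (i.e.\ $r(x)=r(-x)$) into symmetries of the reduced objects. Because the integrand in the definition of $\xi(x)$ is even, $\xi(\cdot)$ is odd, so $r(\xi)$ is an even function of $\xi$; consequently $\rho(\xi)=\tfrac{1}{2}\dot r/r$ is odd and $V=\rho^2+\dot\rho$ is even. The zero-energy solutions $u_0=r^{1/2}$ and $u_1=r^{1/2}\int_0^{\xi}r^{-1}(\eta)\,d\eta$ are then respectively even and odd in~$\xi$. The substitution $\eta\mapsto -\eta$ in \eqref{18} together with the parity of $u_0,u_1$ shows that $u_0(-\xi,\lambda)$ and $-u_1(-\xi,\lambda)$ satisfy the same Volterra-type equations as $u_0(\xi,\lambda)$ and $u_1(\xi,\lambda)$; by the uniqueness provided by Lemma~\ref{4'}, one obtains $u_0(-\xi,\lambda)=u_0(\xi,\lambda)$ and $u_1(-\xi,\lambda)=-u_1(\xi,\lambda)$ for all $\lambda\in\R$.

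With these parities in hand, the map $\xi\mapsto f_+(-\xi,\lambda)$ solves $\calH u=\lambda^2 u$ and has the asymptotics $e^{-i\lambda\xi}$ as $\xi\to-\infty$, so it coincides with $f_-(\cdot,\lambda)$ by the uniqueness of Jost solutions. Plugging $\xi\mapsto -\xi$ into the representation $f_+(\xi,\lambda)=a_+(\lambda)u_0(\xi,\lambda)+b_+(\lambda)u_1(\xi,\lambda)$ and invoking the parities produces
\[
f_-(\xi,\lambda)=f_+(-\xi,\lambda)=a_+(\lambda)u_0(\xi,\lambda)-b_+(\lambda)u_1(\xi,\lambda),
\]
and comparison with the decomposition of $f_-$ gives $a_-(\lambda)=a_+(\lambda)$ and $b_-(\lambda)=-b_+(\lambda)$. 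The only non-cosmetic step is the verification of the parities of the perturbed solutions $u_j(\cdot,\lambda)$, which I expect to be the main (but still routine) point; everything else is a bookkeeping consequence of $W(u_0(\cdot,\lambda),u_1(\cdot,\lambda))=1$.
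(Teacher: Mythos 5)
Your argument is correct and follows essentially the same route as the paper: the representation and Wronskian identities are read off from $W(u_0(\cdot,\lambda),u_1(\cdot,\lambda))=1$, and the symmetric case rests on $f_+(-\xi,\lambda)=f_-(\xi,\lambda)$ together with the parities of $u_0(\cdot,\lambda)$ (even) and $u_1(\cdot,\lambda)$ (odd). The only differences are cosmetic and welcome ones: you verify the parity of the energy-perturbed solutions $u_j(\cdot,\lambda)$ via \eqref{18} (a point the paper leaves implicit, stating parity only for the zero-energy solutions), and you conclude $a_-=a_+$, $b_-=-b_+$ by comparing coefficients in the decomposition rather than by recomputing the Wronskians under the reflection $\xi\mapsto-\xi$, which is equivalent bookkeeping.
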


\begin{proof}
The Wronskian relations for $a_{\pm}$, $b_{\pm}$ follow immediately
from (\ref{19}).  The formula for $W(\lambda)$ also follows by
plugging~\eqref{20} into~\eqref{eq:Wagain}. In the symmetric case,
i.e., assuming $r(x)=r(-x)$ one also has $r(\xi )=r(-\xi )$.  In
particular, this implies that $f_-(-\xi ,\lambda )=f_+(\xi ,\lambda
)$ and $u_0(-\xi )=u_0(\xi )$ as well as $u_1(-\xi )=-u_1(\xi )$.
Thus,
\begin{align*} a_-(\lambda ) &= W(f_{-}(\cdot, \lambda),u_1(\cdot
,\lambda))=-W(f_-(-\cdot ,\lambda),u_1(-\cdot
,\lambda))\\
&=W(f_+(\cdot ,\lambda ),u_1(\cdot ,\lambda ))=a_+(\lambda )\\
 b_-(\lambda ) &=-W(f_-(\cdot ,\lambda),u_0(\cdot ,\lambda))=
W(f_-(-\cdot ,\lambda),u_0(-\cdot ,\lambda))\\
&=W(f_+(\cdot ,\lambda ),u_0(\cdot ,\lambda ))=-b_+(\lambda )
\end{align*}
as claimed.
\end{proof}

\subsection{The analysis of $f_+(\cdot,\lambda)$ at a conical end, $d=1$}

By Corollary~\ref{cor:Vdec}, \beeq\label{14'} V(\xi
)=-\frac{1}{4\xi^2}+V_1(\xi ),\qquad \xi\to\infty \eneq where
$|V_1(\xi )|\les |\xi |^{-3}$. Moreover, $\vert V_1^{(k)}(\xi
)\vert\les |\xi |^{-3-k}$ for $\xi >1$.

\begin{lemma}\label{lemma4}  Let
\[\calH_0:=-\partial^2_{\xi}-\frac{1}{4\xi^2}\]
For any $\lambda >0$ the problem
\begin{align*}
\calH_0f_0(\cdot ,\lambda )&=\lambda^2f_0(\cdot ,\lambda ),\\
f_0(\xi ,\lambda )&\sim e^{i\xi\lambda} \text{\ \ as\ \ }
\xi\to\infty
\end{align*}
has a unique solution on $\xi >0$.  It is given by \beeq\label{23}
f_0(\xi ,\lambda )=\sqrt{\frac{\pi}{2}}\, e^{i\pi
/4}\sqrt{\xi\lambda}\,H^{(+)}_0(\xi\lambda). \eneq Here
$H^{(+)}_0(z)=J_0(z)+iY_0(z)$ is the Hankel function of order zero.
\end{lemma}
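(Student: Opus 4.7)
The plan is to reduce the equation $\calH_0 f_0 = \lambda^2 f_0$ to Bessel's equation of order $0$ by two successive substitutions, identify the correct Hankel function from the prescribed asymptotics at $+\infty$, and finally fix the normalizing constant.

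First, I would make the dilation $z := \xi \lambda$ and set $\phi(z) := f_0(\xi,\lambda)$. Since $\partial_\xi = \lambda \partial_z$, the equation $-f_0'' - \frac{1}{4\xi^2} f_0 = \lambda^2 f_0$ becomes the $\lambda$-independent equation
\begin{equation*}
\phi''(z) + \Bigl(1 + \frac{1}{4z^2}\Bigr)\phi(z) = 0, \qquad z>0.
\end{equation*}
Next I would write $\phi(z) = \sqrt{z}\,g(z)$. A short calculation shows that the $\frac{1}{4z^2}$ singular term from $(\sqrt{z})''$ cancels exactly against the $\frac{1}{4z^2}$ potential, and what remains is the standard Bessel equation of order $0$:
\begin{equation*}
g''(z) + \frac{1}{z} g'(z) + g(z) = 0.
\end{equation*}

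The space of solutions to this equation is two-dimensional, spanned by the Hankel functions $H_0^{(+)}(z)=J_0(z)+iY_0(z)$ and $H_0^{(-)}(z)=J_0(z)-iY_0(z)$. Using the classical asymptotics
\begin{equation*}
H_0^{(\pm)}(z) = \sqrt{\tfrac{2}{\pi z}}\, e^{\pm i(z-\pi/4)} \bigl(1+O(z^{-1})\bigr), \qquad z\to\infty,
\end{equation*}
I see that $\sqrt{z}\,H_0^{(+)}(z)$ is the (unique up to constant) solution of the transformed equation that behaves like a constant multiple of $e^{iz}$ at infinity, whereas $\sqrt{z}\,H_0^{(-)}(z)$ behaves like $e^{-iz}$. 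Undoing the substitutions shows that
\begin{equation*}
f_0(\xi,\lambda) = C\sqrt{\xi\lambda}\,H_0^{(+)}(\xi\lambda)
\end{equation*}
is the only candidate (up to the constant $C$) consistent with the Jost condition $f_0 \sim e^{i\xi\lambda}$; the other Hankel function is excluded because it matches the wrong outgoing direction. Multiplying the leading-order asymptotic $\sqrt{\xi\lambda}\cdot \sqrt{\tfrac{2}{\pi\xi\lambda}}\, e^{i(\xi\lambda-\pi/4)} = \sqrt{\tfrac{2}{\pi}}\,e^{-i\pi/4}\, e^{i\xi\lambda}$ by $C$ and demanding the limit equal $e^{i\xi\lambda}$ forces $C = \sqrt{\pi/2}\,e^{i\pi/4}$, which is exactly~\eqref{23}.

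The only subtle point is the assertion of \emph{uniqueness}: this is not obvious a priori because the equation is singular at $\xi=0$, so the two linearly independent solutions are not both asymptotically of the form $e^{\pm i\xi\lambda}$ near $\xi=0$. However, uniqueness is needed only at $\infty$: any other solution differing from~\eqref{23} would be a linear combination involving $\sqrt{\xi\lambda}\,H_0^{(-)}(\xi\lambda)$ with a non-zero coefficient, and the $e^{-i\xi\lambda}$ contribution from that term cannot be cancelled by $e^{i\xi\lambda}$, violating the Jost normalization. No real obstacle arises; the argument is a direct computation once the reduction to Bessel's equation is in place.
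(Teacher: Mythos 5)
Your proof is correct and follows essentially the same route as the paper: identify $\sqrt{z}\,H_0^{(\pm)}(\lambda z)$ as a fundamental system for $w''+(\lambda^2+\tfrac{1}{4z^2})w=0$, use the Hankel asymptotics at infinity to exclude $H_0^{(-)}$ and to fix the normalizing constant $\sqrt{\pi/2}\,e^{i\pi/4}$. The only difference is that you derive the reduction to Bessel's equation by the substitutions $z=\xi\lambda$, $\phi=\sqrt{z}\,g$ explicitly, whereas the paper simply cites Abramowitz--Stegun for the fundamental system; your uniqueness argument via the non-cancellable $e^{-i\xi\lambda}$ component is the same point the paper makes implicitly.
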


\begin{proof}
It is well-known, see Abramowitz-Stegun\cite{AS}, that the ordinary
differential equation
$$w''(z)+\left(\lambda^2+\frac{1}{4z^2}\right)W(z)=0$$
has a fundamental system of solutions $\sqrt{z}\, J_0(\lambda z)$,
$\sqrt{z}\, Y_0(\lambda z)$ or equivalently, \[\sqrt{z}\,
H^{(+)}_0(\lambda z),\qquad \sqrt{z}\, H^{(-)}_0(\lambda z).\]
Recall the asymptotic relations
\begin{align*}
H^{(+)}_0(x)&\sim\sqrt{\frac{2}{\pi x}}\, e^{i(x-\frac{\pi}{4})}
\qquad\mathrm{as}\; x\to +\infty\\
H^{(-)}_0(x)&\sim\sqrt{\frac{2}{\pi x}}\,
e^{-i(x-\frac{\pi}{4})}\qquad\mathrm{as}\; x\to +\infty.
\end{align*}
Thus, (\ref{23}) is the unique solution so that $$f_0(\xi ,\lambda
)\sim e^{i\xi\lambda},$$ as claimed.
\end{proof}

Having these tools at our disposal, we proceed with our
investigation of the Jost solutions. To this end, instead of the
Volterra equation (\ref{eq:Volt}) we will work with the following
representation of the solutions of (\ref{13}):

\begin{lemma}\label{lemma5}
 For any $\xi >0$, $\lambda >0$, \beeq\label{24} f_+(\xi
,\lambda )=f_0(\xi ,\lambda )+\int_{\xi}^{\infty}G_0(\xi ,\eta
;\lambda )V_1(\eta )f_+(\eta,\lambda)\, d\eta \eneq with $V_1$ as in
$(\ref{14'})$, $f_0$ as in $(\ref{23})$ and \beeq\label{25} G_0(\xi
,\eta ;\lambda)=[\overline{f_0(\xi ,\lambda )}f_0(\eta, \lambda)
)-f_0(\xi,\lambda)\overline{f_0(\eta,\lambda)}](2i\lambda )^{-1}.
\eneq For any small $\lambda>0$ and $1<\xi<\lambda^{-1}$,
\begin{equation}\label{eq:G0_bd0}
|G_0(\xi ,\eta ;\lambda)|\les
(\xi\eta)^{\frac12}|\log\lambda|^2\chi_{[\xi<\eta<\lambda^{-1}]} +
(\xi/\lambda)^{\frac12}|\log\lambda|\chi_{[\eta>\lambda^{-1}]}
\end{equation}
\end{lemma}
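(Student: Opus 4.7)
The plan is to establish the integral equation \eqref{24} by variation of parameters applied to the identity $\calH-\lambda^2=(\calH_0-\lambda^2)+V_1$: rewriting $\calH f_+=\lambda^2 f_+$ as the inhomogeneous problem $(\calH_0-\lambda^2)f_+=-V_1 f_+$ and using the fundamental system $\{f_0(\cdot,\lambda),\overline{f_0(\cdot,\lambda)}\}$ for $\calH_0-\lambda^2$ provided by Lemma~\ref{lemma4} (together with reality of $\calH_0$). The Wronskian $W(f_0,\overline{f_0})=-2i\lambda$ is computed from the asymptotic $f_0(\xi,\lambda)\sim e^{i\xi\lambda}$, which accounts for the factor $(2i\lambda)^{-1}$ in \eqref{25}. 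A direct check yields $G_0(\xi,\xi;\lambda)=0$, $\partial_\xi G_0(\xi,\eta;\lambda)|_{\eta=\xi}=-1$, and $(\calH_0-\lambda^2)_\xi G_0=0$ for $\eta>\xi$; differentiating the right-hand side of \eqref{24} twice then shows that the boundary term at $\eta=\xi$ contributes exactly $V_1 f_+$, which cancels against the $V_1 f_+$ term in $\calH-\lambda^2$. Hence any solution of \eqref{24} satisfies $\calH u=\lambda^2 u$.

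To identify this solution with the Jost solution one applies Lemma~\ref{4'} to \eqref{24} on $(a,\infty)$ for large $a$, with kernel $K(\xi,\eta;\lambda)=G_0(\xi,\eta;\lambda)V_1(\eta)\chi_{[\eta>\xi]}$ and source $g=f_0(\cdot,\lambda)$. The large-argument Hankel asymptotic recalled in Lemma~\ref{lemma4} gives $|f_0|\les 1$ and hence $|G_0|\les\lambda^{-1}$ on $\{\xi,\eta\gg\lambda^{-1}\}$, while $|V_1(\eta)|\les\eta^{-3}$ makes $\mu=\int_a^\infty\sup_{a<\xi<\eta}|K(\xi,\eta;\lambda)|\,d\eta$ finite for each fixed $\lambda>0$. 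The resulting Volterra iteration converges to a solution with $f_+(\xi,\lambda)\sim e^{i\xi\lambda}$ as $\xi\to\infty$, which by uniqueness is the Jost solution of \eqref{13}.

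For the bound \eqref{eq:G0_bd0} I start from the pointwise inequality $|G_0(\xi,\eta;\lambda)|\le\lambda^{-1}|f_0(\xi,\lambda)||f_0(\eta,\lambda)|$ and substitute the explicit formula \eqref{23}. The small-argument expansion $H^{(+)}_0(z)=1+\tfrac{2i}{\pi}(\log(z/2)+\gamma)+O(z^2\log z)$ gives, for $1<\xi<\lambda^{-1}$ (so that $\xi\lambda<1$),
\[
|f_0(\xi,\lambda)|\les \sqrt{\xi\lambda}\,(1+|\log(\xi\lambda)|)\les \sqrt{\xi\lambda}\,|\log\lambda|,
\]
where the last inequality uses $0\le\log\xi\le|\log\lambda|$. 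The large-argument asymptotic yields $|f_0(\eta,\lambda)|\les 1$ whenever $\eta\lambda\ge 1$. Multiplying these bounds and dividing by $\lambda$ in the two regimes $\xi<\eta<\lambda^{-1}$ and $\eta>\lambda^{-1}$ produces the two terms on the right-hand side of \eqref{eq:G0_bd0}.

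The main obstacle is bookkeeping the logarithmic factors generated by the attractive $-\tfrac{1}{4}\xi^{-2}$ tail of $V$, which through $Y_0$ propagates into $f_0$ and $G_0$ as $\lambda\to 0$. This logarithmic blow-up is the source of the resonant behavior that distinguishes $d+n=1$ from $d+n>1$ and will dominate the eventual small-$\lambda$ analysis of $W(\lambda)$. One only needs some care in the transition region $\xi\lambda,\eta\lambda\sim 1$ where the two Hankel asymptotics meet, but there both are $O(1)$ and nothing needs to be tracked.
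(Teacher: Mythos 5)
Your proposal is correct and follows essentially the same route as the paper: one verifies that $G_0$ is the Green's function of $\calH_0-\lambda^2$ (variation of parameters with $W(f_0(\cdot,\lambda),\overline{f_0(\cdot,\lambda)})=-2i\lambda$), solves \eqref{24} by the Volterra iteration of Lemma~\ref{4'} using $|G_0|\les\lambda^{-1}$ and $|V_1(\eta)|\les\eta^{-3}$ to match the asymptotics $e^{i\xi\lambda}$ and identify the Jost solution, and obtains \eqref{eq:G0_bd0} from $|G_0|\le\lambda^{-1}|f_0(\xi,\lambda)||f_0(\eta,\lambda)|$ together with the small- and large-argument Hankel asymptotics. (Your value $\partial_\xi G_0(\xi,\eta;\lambda)|_{\eta=\xi}=-1$ is the derivative in the first variable and is consistent with the required cancellation; the paper's $+1$ refers to the second variable, so there is no discrepancy.)
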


\begin{proof}
Simply observe that $G_0$ is the Green's function of our problem
relative to $\calH_0$.  Indeed,
\begin{align*}
G_0(\xi ,\xi;\lambda )&=0,\\
\partial_{\xi}G_0(\xi,\eta;\lambda)|_{\eta =\xi}&=1,\\
\calH_0G_0(\cdot,\eta;\lambda )&=\lambda^2G_0(\cdot,\eta;\lambda).
\end{align*}
Here we have used that $W(f_0(\cdot,\lambda
),\overline{f_0(\cdot,\lambda)})=-2i\lambda$ which can be seen by
computing the Wronskian at $\xi =\infty$. In conclusion,
$$\calH_0f_+(\xi,\lambda )=\lambda^2\left[f_0(\xi,\lambda)
+\int_{\xi}^{\infty}G_0(\xi,\eta;\lambda)V_1(\eta)f_+(\eta,\lambda)\, d\eta \right]-V_1(\xi )f_+(\xi,\lambda )$$
or equivalently,
$$\calH f_+(\cdot,\lambda )=\lambda^2f_+(\cdot ,\lambda ).$$
Finally, observe that for $\xi >\lambda^{-1}$ fixed,
$$\sup_{\eta >\xi}|G_0(\xi,\eta ;\lambda )|\les\lambda^{-1}.$$
By the Volterra iteration discussed above, this implies that
$|f_+(\xi,\lambda )-f_0(\xi,\lambda )|\les \lambda^{-1}\xi^{-2}.$ In
particular,
$$f_+(\xi,\lambda )\sim e^{i\lambda\xi} \qquad\mathrm{as}\;
\xi\to\infty$$ For the estimate~\eqref{eq:G0_bd0}, recall the
asymptotic bounds \beeq\label{26}
H^{(+)}_0(x)=1+O_{\R}(x^2)+\frac{2}{\pi}i\log x+i\varkappa
+iO_{\R}(x^2\log x) \eneq as $x\to 0$ where $\varkappa$ is some real
constant, see \cite{AS}. Moreover, $|H^{(+)}_0(x)|\les x^{-\frac12}$
for all $x>1$. Hence,
\begin{align*}
  |G_0(\xi,\eta;\lambda)| &\les (\xi\eta)^{\frac12} |H_0^{(+)}(\lambda
\xi)| |H_0^{(+)}(\lambda \eta)| \\
&\les (\xi\eta)^{\frac12} |\log(\lambda\xi)| \Big(
|\log(\lambda\eta)| \chi_{[\eta\lambda<1]} +
(\eta\lambda)^{-\frac12} \chi_{[\eta\lambda\ge 1]}\Big)
\end{align*}
which implies~\eqref{eq:G0_bd0}.
\end{proof}

\noindent Estimating the oscillatory integrals will require
understanding
$\partial^{k}_{\lambda}\partial^{\ell}_{\xi}f_{\pm}(\xi,\lambda)$,
for $0\leq k+\ell\leq 2$, $W(\lambda)$, $W'(\lambda )$ and thus
$a_{\pm}(\lambda)$, $b_{\pm}(\lambda)$, $a_{\pm}'(\lambda )$ and
$b_{\pm}'(\lambda )$. To obtain asymptotic expansions for all these
functions, we need to know the asymptotic behavior of $u_{j}(\xi)$,
and thereafter that of
$\partial^{k}_{\lambda}\partial^{\ell}_{\xi}u_{j}(\xi,\lambda)$, for
$j=1,\, 2$ and $0\leq k+\ell\leq 2$.

To study the asymptotic behavior of the $u_{j}(\xi ,\lambda)$,  we
use~\eqref{18}. Setting $h_j(\xi,\lambda):=\frac{u_j(\xi ,\lambda
)}{u_j(\xi )}$, for $\xi >0$ we obtain the integral equations
 \begin{align} \label{31'}
h_{0}(\xi ,\lambda
)&=1+\frac{\lambda^2}{u_{0}(\xi)}\int_0^{\xi}[u_1(\xi)u^{2}_0(\eta)-u_0(\xi)u_1(\eta)u_0
(\eta)]h_0(\eta ,\lambda )\, d\eta, \\
\label{31''} h_{1}(\xi ,\lambda
)&=1+\frac{\lambda^2}{u_{1}(\xi)}\int_0^{\xi}[u_1(\xi
)u_0(\eta)u_1(\eta )-u_0(\xi )u_1^2(\eta )] h_{1}(\eta ,\lambda)\,
d\eta
\end{align}
from~\eqref{18}. The first iterates of~\eqref{31'} and~\eqref{31''}
are controlled by the following lemma. The $O(\cdot)$ terms
appearing here will be differentiated later, for now we only control
their size.

\begin{corollary}\label{cor3}
As $\xi\to\infty$,
\begin{align}\label{31}
u_1(\xi )\int_0^{\xi}u_0^2(\eta )\, d\eta -u_0(\xi
)\int_0^{\xi}u_1u_0(\eta )\, d\eta &=
\frac{1}{4}2^{-1/4}\xi^{5/2}+O(\xi^{3/2}\log\xi ) \\
\label{32} u_1(\xi )\int_0^{\xi}u_0u_1(\eta )\, d\eta -u_0(\xi
)\int_0^{\xi}u_1^2(\eta )\, d\eta &=
\frac{1}{4}2^{1/4}\xi^{5/2}\log\xi
\\&\qquad\qquad +c_3\xi^{5/2}+O(\xi^{\frac{3}{2}}\log\xi ) \nonumber
\end{align}
where $c_3\in\R$ is some constant.
\end{corollary}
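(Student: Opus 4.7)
The plan is to recognize that both \eqref{31} and \eqref{32} share the structure
\[
\int_0^\xi K(\xi,\eta)\,v(\eta)\,d\eta,\qquad K(\xi,\eta):=u_1(\xi)u_0(\eta)-u_0(\xi)u_1(\eta),
\]
with $v=u_0$ for \eqref{31} and $v=u_1$ for \eqref{32}, and to simply substitute the asymptotics from Lemma~\ref{lemma7} and integrate. Since $u_0,u_1$ are smooth on all of~$\R$ (see \eqref{eq:u0u1}), the contribution of any fixed compact piece $[0,R]$ of the $\eta$-integration is bounded, and after multiplication by the outer factor $u_0(\xi)$ or $u_1(\xi)$ it contributes only $O(\xi^{1/2}\log\xi)$, comfortably inside the claimed error. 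So the real work is on $[R,\xi]$ with $R$ a large fixed constant, where the expansions of Lemma~\ref{lemma7} apply.

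On $[R,\xi]$, the products $u_0^2$, $u_0u_1$, $u_1^2$ are explicit sums of monomials $\eta(\log\eta)^k$ ($k=0,1,2$) with known coefficients in terms of $c_\infty$ and $c_2$, plus symbol-type remainders of size $O(\eta^{-1}(\log\eta)^2)$. Each monomial integrates via elementary integration by parts into a polynomial combination of $\xi^2(\log\xi)^j$ with $j\le k$. Multiplying the resulting expansions of $\int_0^\xi u_0^2$, $\int_0^\xi u_0u_1$, $\int_0^\xi u_1^2$ by the outer factor $u_1(\xi)$ or $u_0(\xi)$ (again expanded via Lemma~\ref{lemma7}) and taking the appropriate difference produces both identities.

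The essential point is the exact cancellation of the top-order terms: in \eqref{31} the $\xi^{5/2}\log\xi$ contribution equals $2^{-5/4}$ on each side, and in \eqref{32} the $\xi^{5/2}(\log\xi)^2$ contribution equals $2^{-3/4}$ on each side. These cancellations are not accidental; they reflect the identity $K(\xi,\xi)\equiv 0$ (equivalently, $W(u_0,u_1)=1$), so the ``diagonal'' of the kernel contributes nothing. The surviving leading terms come from the subleading corrections in Lemma~\ref{lemma7}, and a short calculation yields the coefficients $\tfrac14 2^{-1/4}$ and $\tfrac14 2^{1/4}$ respectively; the remaining $\xi^{5/2}$ contribution in \eqref{32} is absorbed into the constant~$c_3$, whose explicit value is immaterial. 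The $O(\xi^{3/2}\log\xi)$ remainder is inherited from the $O(\eta^{-1})$ symbol-type errors in Lemma~\ref{lemma7}: they integrate to $O(\xi\log\xi)$ and then pick up the outer factor $u_0(\xi)\sim\xi^{1/2}$ or $u_1(\xi)\sim\xi^{1/2}\log\xi$.

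The main obstacle is pure bookkeeping: the logarithms in $u_1$ generate several cross-terms at each order, and one must carefully track which land at $\xi^{5/2}\log\xi$, at $\xi^{5/2}$, or in the remainder. The top-order cancellation itself is robust and does not depend on the particular values of $c_\infty$ or~$c_2$.
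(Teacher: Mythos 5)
Your proposal is correct and follows essentially the same route as the paper's own proof: insert the asymptotics of Lemma~\ref{lemma7}, integrate the resulting monomials $\eta(\log\eta)^k$, multiply by the outer factors $u_0(\xi),u_1(\xi)$, and exploit the cancellation of the top-order terms ($\xi^{5/2}\log\xi$ in \eqref{31}, $\xi^{5/2}\log^2\xi$ in \eqref{32}); your coefficients $2^{-5/4}$, $2^{-3/4}$, $\tfrac14\,2^{-1/4}$ and $\tfrac14\,2^{1/4}$ all check out. Two small inaccuracies, neither fatal: the surviving $\tfrac14$-coefficients come from the subleading terms generated by integrating $\eta\log\eta$ and $\eta\log^2\eta$ (they are independent of $c_\infty$ and $c_2$), not from the subleading corrections in Lemma~\ref{lemma7}, and your error bookkeeping (an $O(\xi\log\xi)$ integrated error times the outer factor $u_1(\xi)\sim\xi^{1/2}\log\xi$) literally yields only $O(\xi^{3/2}\log^2\xi)$ in \eqref{32} --- the stated $O(\xi^{3/2}\log\xi)$ uses the additional cancellation of the $c_\infty\,\xi\log^2\xi$ contributions between the two products, a point the paper's own computation also glosses over and which is harmless for the later estimates.
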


\begin{proof}
By the asymptotic expressions for $u_0$ and $u_1$,
\begin{align*}
\int_0^{\xi}u_0^2(\eta )\, d\eta & =
2^{-1/2}\int_0^{\xi}\eta \left(1-\frac{c_{\infty}}{\langle\eta\rangle}+O(\langle\eta\rangle^{-2})\right)\,d\eta\\
&=2^{-1/2}\left(\frac{1}{2}\xi^2-c_{\infty}\xi + O(\log\xi )\right)\\
\int_0^{\xi}u_0(\eta )u_1(\eta )\, d\eta & = \int_0^{\xi}\eta
\left(1-\frac{c_{\infty}}{\langle\eta\rangle}+O(\langle\eta\rangle^{-2})\right)
\left(\log\eta +c_2+O(\langle\eta\rangle^{-1})\right)\,d\eta\\
&=\frac{1}{2}\xi^2\log\xi
+\frac{1}{2}\left(c_2-\frac{1}{2}\right)\xi^2+O(\xi\log\xi ).
\end{align*}
Thus,
\begin{align*}
(\ref{31})&=2^{-1/4}\xi^{1/2}(\log\xi +c_2+O(\xi^{-1}\log\xi ))\left(\frac{1}{2}\xi^2+O(\xi )\right)\\
&\quad -2^{-1/4}\xi^{1/2}(1+O(\xi^{-1}))
\left(\frac{1}{2}\xi^2\log\xi+\frac{1}{2}\left(c_2-\frac{1}{2}\right)\xi^2+O(\xi\log\xi )\right)\\
&=2^{-1/4}\xi^{1/2}\left[\frac{1}{4}\xi^2+O(\xi\log\xi )\right]
\end{align*}
Next, compute
\begin{align*}
\int_0^{\xi}u_1^2(\eta )\, d\eta &=\sqrt{2}\,\int_0^{\xi}\eta
(\log^2\eta +2c_2\log\eta +O(\langle\eta\rangle^{-1}\log\eta ))(1+O(\langle\eta\rangle^{-1}))\, d\eta\\
&=\sqrt{2}\,\left(\frac{1}{2}\xi^2\log^2\xi +(2c_2-1)\int_0^{\xi}\eta\log\eta\, d\eta +O(\xi\log^2\xi )\right)\\
&=\sqrt{2}\,\left(\frac{1}{2}\xi^2\log^2\xi
+\frac{2c_2-1}{2}\xi^2\log\xi -\frac{2c_2-1}{4}\xi^2+O(\xi\log^2\xi
)\right)
\end{align*}
Thus, $(\ref{32})$ equals
\begin{align*}
2^{1/4}&\xi^{1/2}(\log\xi +c_2+O(\xi^{-1}))(1+O(\xi^{-1}))
\left(\frac{1}{2}\xi^2\log\xi
+\frac{1}{2}\left(c_2-\frac{1}{2}\right)\xi^2
+O(\xi\log\xi )\right)\\
&-2^{1/4}\xi^{1/2}(1+O(\xi^{-1})) \left(\frac{1}{2}\xi^2\log^2\xi
+\frac{2c_2-1}{2}\xi^2\log\xi -\frac{2c_2-1}{4}\xi^2+O(\xi\log^2\xi
)\right) \\
=2^{1/4}&\xi^{1/2}\bigg\{\frac{1}{2}\xi^2\log^2\xi
+\frac{2c_2-\frac{1}{2}}{2}\xi^2\log\xi +O(\xi\log^2\xi )+\frac{c_2}{2}\left( c_2-\frac{1}{2}\right)\xi^2\\
&-\frac{1}{2}\xi^2\log^2\xi -\frac{2c_2-1}{2}\xi^2\log\xi
+\frac{2c_2-1}{4}\xi^2\bigg\}\end{align*} which finally reduces to
\begin{align*}
2^{1/4}&\sqrt{\xi}\left(\frac{1}{4}\xi^2\log\xi+2^{-1/4}c_3\xi^2+O(\xi\log\xi
)\right)
\end{align*}
as claimed.
\end{proof}

Thus a Volterra iteration and the preceding yields the following
result for the $u_{j}(\xi,\lambda)$'s. The importance of
Corollary~\ref{cor4} lies with the fact that we do not lose
$\log\xi$ factors in the $O(\cdot)$-terms
 as such factors would destroy the dispersive estimate. It is easy to see that carrying
out the Volterra iteration crudely, by putting absolute values
inside the integrals, leads to such $\log\xi$ losses. Therefore, we
actually need to compute the Volterra iterates in~\eqref{eq:volt_it}
explicitly (for the version~($**$)) .

\begin{corollary}\label{cor4}
In the range $1\ll \xi \les \lambda^{-1}$, $j=0,1$,
\begin{align}\label{33}
u_j(\xi ,\lambda )&=u_j(\xi )(1+O((\xi\lambda )^2))\\
\partial_{\xi}u_j(\xi ,\lambda )&=u_j'(\xi )(1+O((\xi\lambda )^2))\nonumber
\\\label{34}
\partial_{\lambda}u_0(\xi ,\lambda )&=\frac{1}{2}2^{-1/4}\lambda (\xi^{5/2}+O(\xi^{3/2}\log\xi ))(1+O((\xi\lambda )^2))\\
\partial_{\lambda}u_1(\xi ,\lambda )&=\frac{1}{2}2^{1/4}\lambda
(\xi^{5/2}\log\xi +c_3\xi^{5/2}+O(\xi^{3/2}\log\xi ))(1+O((\xi\lambda )^2))\nonumber
\\\label{35}
\partial^2_{\lambda\xi}u_0(\xi ,\lambda )&=\frac{5}{4}2^{-1/4}\lambda (\xi^{3/2}+O(\xi^{1/2}\log\xi ))(1+O((\xi\lambda )^2))\\
\partial^2_{\lambda\xi}u_1(\xi ,\lambda )&=\frac{5}{4}2^{1/4}\lambda (\xi^{3/2}\log\xi +\frac{2}{5}\xi^{3/2}\nonumber \\
&\qquad\qquad\qquad +c_3\xi^{3/2}+O(\xi^{1/2}\log\xi
))(1+O((\xi\lambda )^2))\nonumber
\end{align}
If $|\xi|\les 1$, then $  |u_j(\xi,\lambda)|\les 1$,
$|\partial_\lambda u_j(\xi,\lambda)|+ |\partial^2_{\lambda\xi}
u_j(\xi,\lambda)|\les \lambda$ for $j=0,1$.
\end{corollary}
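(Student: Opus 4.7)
The plan is to attack all six bounds in (\ref{33})--(\ref{35}) by Volterra iteration applied to (\ref{18}), taking great care to compute the \emph{first} iterate explicitly via Corollary~\ref{cor3} so that the otherwise fatal $\log\xi$ factors cancel. Higher iterates can then be controlled by crude majorants from Lemma~\ref{4'}, while the easy case $|\xi|\les 1$ is handled separately.

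I would start with (\ref{33}). The first iterate of (\ref{31'}) with $h_0\equiv 1$ equals exactly $\lambda^2/u_0(\xi)$ times the left-hand side of (\ref{31}), which by Corollary~\ref{cor3} gives $\frac14(\xi\lambda)^2 + O((\xi\lambda)^2\xi^{-1}\log\xi)$; the analogous computation for $h_1$ uses (\ref{32}). Higher iterates are handled by Lemma~\ref{4'} applied to the kernel $K_j(\xi,\eta;\lambda) = \lambda^2 [u_1(\xi)u_0(\eta) - u_1(\eta)u_0(\xi)]u_j(\eta)/u_j(\xi)$, for which one checks $\int_0^\xi |K_j(\xi,\eta;\lambda)|\,d\eta \les (\xi\lambda)^2$, yielding $h_j - 1 = O((\xi\lambda)^2)$ uniformly in $1\ll\xi\les\lambda^{-1}$. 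For $\partial_\xi u_j$ I would differentiate (\ref{18}) in $\xi$; the boundary term vanishes because the bracketed integrand vanishes at $\eta=\xi$, and the same analysis applies with $u_j(\xi),u_j'(\xi)$ interchanged outside the integral. For (\ref{34}) I differentiate (\ref{18}) in $\lambda$; the main contribution is $2\lambda\int_0^\xi [u_1(\xi)u_0(\eta) - u_1(\eta)u_0(\xi)]u_j(\eta,\lambda)\,d\eta$, into which I substitute (\ref{33}) and apply Corollary~\ref{cor3} directly to obtain the stated leading asymptotics; the remaining $\lambda^2$-tail is absorbed by a second Volterra iteration into the factor $1+O((\xi\lambda)^2)$. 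For (\ref{35}) one differentiates once more in $\xi$, again without boundary contribution; this reduces matters to the analogues of (\ref{31})--(\ref{32}) in which $u_0(\xi),u_1(\xi)$ outside the integral are replaced by their derivatives, with the same cancellation mechanism producing the factor $5/4$. The region $|\xi|\les 1$ is settled by the boundedness of the kernel on compacts together with the explicit $\lambda$-prefactor in (\ref{18}).

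The main obstacle is the cancellation in the very first iterate. A naive triangle-inequality bound $|u_1(\xi)u_0(\eta) - u_1(\eta)u_0(\xi)|\les (\xi\eta)^{1/2}\log\xi$ inside the integral would only give $h_j - 1 = O((\xi\lambda)^2\log\xi)$, logarithmically worse than claimed, and this stray logarithm would then propagate through the subsequent $\lambda$-differentiations of $W(\lambda)^{-1}$ and destroy the dispersive estimate. It is therefore essential to use the exact algebraic identities (\ref{31})--(\ref{32}): the leading $\xi^{5/2}\log\xi$ contributions from $u_1(\xi)\int u_0^2$ and $u_0(\xi)\int u_0 u_1$ cancel against each other, leaving the clean $\xi^{5/2}$ that drives every subsequent estimate.
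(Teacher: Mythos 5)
Your proposal is correct and follows essentially the same route as the paper: Volterra iteration for $h_j=u_j(\cdot,\lambda)/u_j(\cdot)$ with the first iterate computed exactly via Corollary~\ref{cor3} so that the $\xi^{5/2}\log\xi$ (resp.\ $\xi^4\log\xi$ after integration) terms cancel, followed by direct differentiation of \eqref{18} in $\xi$ and $\lambda$ (no boundary terms) and a second Volterra iteration seeded with the explicit first term, the region $|\xi|\les1$ being elementary. The only cosmetic difference is that you also record the kernel bound $\int_0^\xi|K_j|\,d\eta\les(\xi\lambda)^2$ to control the tail of the series, which the paper subsumes in its remark about the $n!$ gain from repeated integration of powers.
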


\begin{proof}
We sketch the proof of this somewhat computational lemma, for the
function $u_{1}(\xi,\lambda)$ since the argument for
$u_{0}(\xi,\lambda)$ is completely analogous and in fact easier. The
proof of the first equality in \eqref{33} is based on the Volterra
integral equation \eqref{31''} \beeq h_{1}(\xi ,\lambda
)=1+\lambda^2\int_0^{\xi}[\frac{u_1(\xi )u_0(\eta)u_1(\eta )-u_0(\xi
)u_1^2(\eta )}{u_{1}(\xi)}] h_{1}(\eta ,\lambda)\, d\eta \eneq and
its derivatives in both $\xi$ and $\lambda$ and the Volterra
iteration, for which we also need to use Corollary~\ref{cor3}. The
iteration will produce a solution which is given by
\begin{align*}
&h_{1}(\xi ,\lambda) =  1 + \sum_{n=1}^\infty \lambda^{2n}
\int_0^\xi \int_{0}^{\xi_{1}}\ldots \int_{0}^{\xi_{n-1}}
\frac{u_1(\xi
)u_0(\xi_{1})u_1(\xi_{1} )-u_0(\xi )u_1^2(\xi_{1} )}{u_{1}(\xi)}\cdots\\
&\frac{u_1(\xi_{n-1} )u_0(\xi_{n})u_1(\xi_{n} )-u_0(\xi_{n-1}
)u_1^2(\xi_{n})}{u_{1}(\xi_{n-1} )} \, d\,\xi_{n}\ldots d\,\xi_1 = \\
& 1+ \lambda^{2}\int_0^\xi  \frac{u_1(\xi )u_0(\xi_{1})u_1(\xi_{1}
)-u_0(\xi )u_1^2(\xi_{1} )}{u_{1}(\xi)} d\,\xi_{1}+\\
&\lambda^{4}\int_{0}^{\xi}\int_{0}^{\xi_{1}} \frac{u_1(\xi
)u_0(\xi_{1})u_1(\xi_{1} )-u_0(\xi )u_1^2(\xi_{1}
)}{u_{1}(\xi)}\;\frac{u_1(\xi_{1} ) u_0(\xi_{2})u_1(\xi_{2}
)-u_0(\xi_{1} )u_1^2(\xi_{2})}{u_{1}(\xi_{1} )} \, d\,\xi_{2}\,
d\,\xi_1 +\cdots
\end{align*}
Therefore, \eqref{32} and the equalities
\begin{align*}
u_0(\xi )&=2^{-1/4}\xi^{1/2}\left(1-\frac{c_{\infty}}{2\xi}+O(\xi^{-2})\right)\nonumber\\
u_1(\xi
)&=2^{1/4}\xi^{1/2}\left(1-\frac{c_{\infty}}{2\xi}+O(\xi^{-2})\right)\left(\log\xi+c_2+O(\xi^{-1})\right)
\end{align*}
yield
\begin{align*}
h_{1}(\xi ,\lambda)= & 1+
\frac{\lambda^{2}}{u_{1}(\xi)}\Big(\frac{1}{4}2^{1/4}\xi^{5/2}\log\xi
+c_3\xi^{5/2}+O(\xi^{\frac{3}{2}}\log\xi )\Big)\\
&+\lambda^{4}\Big\{\int_{0}^{\xi}
u_0(\xi_{1})\Big[\frac{1}{4}2^{1/4}\xi_{1}^{5/2}\log\xi_{1}
+c_3\xi_{1}^{5/2}+O(\xi_{1}^{\frac{3}{2}}\log\xi_{1})\Big]\, d\xi_{1}-\\
&\frac{u_{0}(\xi)}{u_{1}(\xi)}\int_{0}^{\xi}u_1(\xi_{1}
)\Big[\frac{1}{4}2^{1/4}\xi_{1}^{5/2}\log\xi_{1}
+c_3\xi_{1}^{5/2}+O(\xi_{1}^{\frac{3}{2}}\log\xi_{1})\Big]\,d\xi_{1}\Big\}
+\cdots \\
&= 1+ O(\lambda^{2}\xi^{2}),
\end{align*}
since we are assuming that $1\ll\xi \les\lambda^{-1}$. The point to
notice here is that terms involving $\xi^4\log\xi$ (the leading
orders) after the integration cancel. Furthermore, we obtain the
usual $n!$ gain from the Volterra iteration, see Lemma~\ref{4'},
from repeated integration of powers rather than from symmetry
considerations. Hence $u_{1}(\xi,\lambda)=
u_{1}(\xi)(1+O(\lambda^{2}\xi^{2}))$ in that range. To deal with the
derivatives, it is more convenient to  directly  differentiate the
integral equation \eqref{18} for $u_{1}(\xi,\lambda)$ with respect
to $\xi$ and/or $\lambda$, which yields, respectively,
\begin{align}\label{iter1}
\partial_{\xi}u_{1}(\xi,\lambda) &= \partial_{\xi} u_{1}(\xi) + \lambda ^2 \int_{0}^{\xi}
[\partial_{\xi}u_1 (\xi) u_{0}(\eta)-u_{1}(\eta)\partial_{\xi}
u_{0}(\xi)]u_{1}( \eta, \lambda) d\eta \\
\label{iter2}
\partial_{\lambda}u_{1}(\xi,\lambda)&=  2\lambda \int_{0}^{\xi}[u_1
(\xi) u_{0}(\eta)-u_{1}(\eta) u_{0}(\xi)]u_{1}( \eta, \lambda) d\eta
\\ & \qquad +\lambda^{2}\int_{0}^{\xi}[u_1 (\xi)
u_{0}(\eta)-u_{1}(\eta)u_{0}(\xi)]\partial_{\lambda}u_{1}(\eta,
\lambda) d\eta, \nonumber
\end{align}
as well as
\begin{align}
 \label{iter3}
\partial^{2}_{\lambda\, \xi}u_{1}(\xi,\lambda)&=  2\lambda
\int_{0}^{\xi}[\partial_{\xi}u_1 (\xi)
u_{0}(\eta)-u_{1}(\eta)\partial_{\xi} u_{0}(\xi)]u_{1}( \eta,
\lambda) d\eta \nonumber\\ &\qquad
+\lambda^{2}\int_{0}^{\xi}[\partial_{\xi}u_1 (\xi)
u_{0}(\eta)-u_{1}(\eta)\partial_{\xi}u_{0}(\xi)]\partial_{\lambda}u_{1}(\eta,
\lambda) d\eta.
\end{align}
In dealing with \eqref{iter1}, we simply plug in the information
from the first equality of \eqref{33} and calculate the resulting
integral. For \eqref{iter2}, we observe that by \eqref{32} the term
\[ 2\lambda \int_{0}^{\xi}[u_1 (\xi) u_{0}(\eta)-u_{1}(\eta)
u_{0}(\xi)]u_{1}( \eta, \lambda) d\eta\] is equal to $\lambda
(\frac12 2^{1/4}\xi^{5/2}\log\xi +2c_3\xi^{5/2}+O(\xi^{3/2}\log\xi
)) $. Therefore to solve \eqref{iter2}, one needs to run the
Volterra iteration with this expression as the first iterate. The
treatment of \eqref{iter3} is similar to that of \eqref{iter2} and
we skip the details. The case of $|\xi|\les 1$ is left to the
reader.
\end{proof}

We now turn to $f_{\pm}(\xi,\lambda )$ as well as $a_{\pm}$,
$b_{\pm}(\lambda )$.

\begin{lemma}\label{lemma8}
If $\lambda >0$ is small, and $|\log\lambda|^2\le \xi \ll
\lambda^{-1}$, then \beeq\label{35'} f_+(\xi,\lambda
)=f_0(\xi,\lambda )+O(\xi^{-1/2}\lambda^{\frac{1}{2}-\eps})\nonumber
\eneq with $\eps
>0$ arbitrary.
\end{lemma}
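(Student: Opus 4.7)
The approach is to bootstrap the Volterra equation (\ref{24}). Setting $g := f_+ - f_0$, it becomes the fixed-point equation $g = F + Tg$, where
\[
F(\xi,\lambda) := \int_\xi^\infty G_0(\xi,\eta;\lambda)\, V_1(\eta)\, f_0(\eta,\lambda)\, d\eta,\qquad Tg(\xi,\lambda) := \int_\xi^\infty G_0(\xi,\eta;\lambda)\, V_1(\eta)\, g(\eta,\lambda)\, d\eta.
\]
My plan is to estimate $F$ pointwise using the bounds from Lemma~\ref{lemma5} together with the asymptotics of $H^{(+)}_0$ from (\ref{26}), and then to invert $I-T$ by a Neumann series in a weighted sup-norm that tracks the $\xi^{-1/2}$ decay.

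To bound $F$, I split the integration at $\eta=\lambda^{-1}$. For $\xi<\eta<\lambda^{-1}$, the bounds (\ref{eq:G0_bd0}), $|V_1(\eta)|\les\eta^{-3}$, and $|f_0(\eta,\lambda)|\les\sqrt{\eta\lambda}|\log\lambda|$ control the integrand by $\xi^{1/2}\lambda^{1/2}|\log\lambda|^3\eta^{-2}$, contributing $\les\xi^{-1/2}\lambda^{1/2}|\log\lambda|^3$ after integration. For $\eta>\lambda^{-1}$, the bounds $|G_0|\les(\xi/\lambda)^{1/2}|\log\lambda|$ and $|f_0|\les 1$ yield a contribution $\les\xi^{1/2}\lambda^{3/2}|\log\lambda|$, which is no larger than $\xi^{-1/2}\lambda^{1/2}|\log\lambda|$ since $\xi\lambda\le 1$. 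Hence $|F(\xi,\lambda)|\les\xi^{-1/2}\lambda^{1/2}|\log\lambda|^3$.

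For the contraction of $T$, define $\|g\|_*:=\sup_{\xi\ge\xi_0}\xi^{1/2}|g(\xi)|$ with $\xi_0=M|\log\lambda|^2$ for $M$ to be chosen large. Under the hypothesis $|g(\eta)|\le\|g\|_*\eta^{-1/2}$, the same case split gives
\[
\xi^{1/2}|Tg(\xi)|\les\|g\|_*\Big(\xi^{-1}|\log\lambda|^2+\xi\lambda^2|\log\lambda|\Big).
\]
Choosing $M$ large and $\lambda$ small makes the right-hand side at most $\alpha\|g\|_*$ for some $\alpha<1$, so $T$ is a strict contraction in the weighted sup-norm, the Neumann series $g=\sum_{n\ge0}T^nF$ converges, and $\|g\|_*\les\|F\|_*\les\lambda^{1/2}|\log\lambda|^3$. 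This yields $|g(\xi)|\les\xi^{-1/2}\lambda^{1/2}|\log\lambda|^3$, and absorbing $|\log\lambda|^3\le C_\eps\lambda^{-\eps}$ for arbitrary $\eps>0$ finishes the proof.

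The main obstacle is the logarithmic singularity of $H^{(+)}_0(\lambda\xi)$ as $\lambda\xi\to 0$, which produces the $|\log\lambda|^2$ factor in (\ref{eq:G0_bd0}) and forces the lower threshold $\xi\gtr|\log\lambda|^2$: below it the leading term $\xi^{-1}|\log\lambda|^2$ in the contraction estimate cannot be made small, and the Neumann series argument breaks down. The $\lambda^{1/2-\eps}$ rather than $\lambda^{1/2}$ rate in the conclusion is precisely the cost of these logarithmic losses.
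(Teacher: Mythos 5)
Your computations are sound and in fact coincide with the paper's at the level of the kernel bounds: both arguments run on the integral equation \eqref{24}, use \eqref{eq:G0_bd0} together with $|V_1(\eta)|\les\eta^{-3}$, split the $\eta$--integration at $\lambda^{-1}$, and arrive at $\xi^{-1/2}\lambda^{1/2}|\log\lambda|^3\les\xi^{-1/2}\lambda^{1/2-\eps}$. The organization differs: the paper first proves the a priori bound $|f_+(\xi,\lambda)|\les m(\xi\lambda)$ with $m(x)=\sqrt{x}\,|\log x|\chi_{[0<x<1]}+\chi_{[x>1]}$, by dividing the Volterra equation by $m$ and invoking Lemma~\ref{4'}, and then substitutes this bound \emph{once} into the integral term of \eqref{24}; you instead work with the difference $g=f_+-f_0$ and close a weighted fixed-point argument. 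Both are legitimate, and the inhomogeneous-term estimate for $F$ is exactly the paper's final display.

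There are, however, three points where your write-up falls short of the statement as claimed and needs patching. (i) Your strict-contraction step requires $\xi\ge M|\log\lambda|^2$ with $M$ large, so as written you prove the lemma only on that smaller range, not on $\xi\ge|\log\lambda|^2$. The cure is to exploit the Volterra structure rather than smallness: applying Lemma~\ref{4'} to $\xi^{1/2}g(\xi)$ (kernel integrated from $\xi$ to $\infty$) one only needs the quantity $\int_{\xi_0}^\infty\sup_{\xi_0<\xi<\eta}\xi^{1/2}|G_0(\xi,\eta;\lambda)V_1(\eta)|\eta^{-1/2}\,d\eta\les \xi_0^{-1}|\log\lambda|^2+\lambda\les1$ to be \emph{bounded}, not less than one, and this holds already for $\xi_0=|\log\lambda|^2$; this is precisely why the paper's argument covers the full stated range. (ii) Your weighted norm runs over all $\xi\ge\xi_0$, but \eqref{eq:G0_bd0} is only valid for $\xi<\lambda^{-1}$; for $\xi\ge\lambda^{-1}$ you must use the crude bound $|G_0(\xi,\eta;\lambda)|\les\lambda^{-1}$ (which suffices, giving a contribution $\les\xi^{-1}\|g\|_*\les\lambda\|g\|_*$), and this regime should be stated. (iii) To identify the Neumann-series solution with $g=f_+-f_0$ you need either an a priori finiteness of $\|g\|_*$ (available for fixed $\lambda$, since $g$ is continuous and $|g(\xi)|\les\lambda^{-1}\xi^{-2}$ for $\xi>\lambda^{-1}$ by the proof of Lemma~\ref{lemma5}) or uniqueness of bounded solutions of the Volterra equation, again from Lemma~\ref{4'}. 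All three are routine repairs, and since the lemma is later applied only at $\xi=\lambda^{-1/2}$, the threshold constant is harmless in practice, but the proof of the statement as phrased should use the factorial gain of the Volterra iteration rather than a contraction.
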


\begin{proof}
Let
\[
m(x):= \sqrt{x}\, |\log x|\, \chi_{[0<x<1]} +
\chi_{[x>1]}
\]
Then, in view of the asymptotic behavior of $H_0^{(+)}$,
\[
|f_0(\xi,\lambda)|\les m(\xi\lambda)
\] and thus also
\[
|G_0(\xi,\eta;\lambda)|\les \lambda^{-1}\,
m(\xi\lambda)m(\eta\lambda)
\]
 We claim that also
\begin{equation}
  \label{eq:f+claim} |f_{+}(\xi,\lambda)|\les m(\xi\lambda)
\end{equation}
With $g(x;\lambda):= f_{+}(\xi,\lambda)/m(\xi\lambda)$, we obtain
the integral inequality
\[
g(\xi,\lambda) \le C + C\int_\xi^\infty \lambda^{-1}|V_1(\eta)|
m(\eta\lambda)^2 g(\eta,\lambda)\, d\eta
\]
for some absolute constant $C$. Since by our assumption on~$\xi$,
\[
\int_\xi^\infty \lambda^{-1}|V_1(\eta)| m(\eta\lambda)^2 \, d\eta
\les \int_\xi^\infty \lambda^{-1} \eta^{-3} m(\eta\lambda)^2 \,
d\eta\les \xi^{-1} |\log\lambda|^2 + \lambda \les 1,
\]
the claim follows from Lemma~\ref{4'}.  We observed above that,
see~\eqref{eq:G0_bd0}, \beeq |G_0(\xi,\eta;\lambda
)|\les\sqrt{\xi\eta}\,|\log\lambda |^2\,\chi_{[\xi <\eta
<\lambda^{-1}]}+\sqrt{\frac{\xi}{\lambda}}\,|\log\lambda
|\,\chi_{[\eta >\lambda^{-1}]}\nonumber \eneq Thus integrating and
taking $1\ll \xi \ll \lambda^{-1}$ into account, we obtain
from~\eqref{eq:f+claim} that
\begin{align*}
\bigg|\int_{\xi}^{\infty}G_0(\xi,\eta;\lambda )V_1(\eta )f_+(\eta,\lambda )\, d\eta \bigg|
&\les\int_{\xi}^{\lambda^{-1}}\sqrt{\xi\eta}\,|\log\lambda |^2\,\eta^{-3}\sqrt{\eta\lambda}\,|\log\lambda |\, d\eta\\
&+\int_{\lambda^{-1}}^{\infty}\sqrt{\frac{\xi}{\lambda}}\,|\log\lambda
|\,\eta^{-3}\, d\eta \les\,\xi^{-1/2}\lambda^{\frac{1}{2}-\eps},
\end{align*}
as claimed.
\end{proof}

We can now state our asymptotic expansion of $a_+$ and $b_+$. In
what follows, $O(\cdot)$ terms are complex-valued unless stated to
the contrary (which will be denoted by $O_\R(\cdot)$).

\begin{lemma}\label{lemma9}
With $\eps>0$ arbitrary, small, and fixed,
\begin{equation}\begin{aligned}\label{36}
a_+(\lambda )&=2^{1/4}c_0\sqrt{\lambda}(1+ic_1\log\lambda +ic_3)+O(\lambda^{1-\eps})\\
b_+(\lambda
)&=i2^{-1/4}c_0c_1\sqrt{\lambda}+O(\lambda^{1-\eps}),
\end{aligned}
\end{equation}
as $\lambda\to0+$,  where
$c_0=\sqrt{\frac{\pi}{2}}e^{i\frac{\pi}{4}}$, $c_1=\frac{2}{\pi}$,
and $c_3$ is some real constant.
\end{lemma}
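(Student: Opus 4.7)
The plan is to exploit the fact that $a_+(\lambda)=W(f_+(\cdot,\lambda),u_1(\cdot,\lambda))$ and $b_+(\lambda)=-W(f_+(\cdot,\lambda),u_0(\cdot,\lambda))$ are independent of $\xi$ (since $f_+$, $u_0(\cdot,\lambda)$, $u_1(\cdot,\lambda)$ all solve $\calH u=\lambda^2 u$) together with the normalization $W(u_0(\cdot,\lambda),u_1(\cdot,\lambda))=1$ from Lemma~\ref{lemma1}. I will evaluate both Wronskians at an intermediate point $\xi_0$ chosen so that both Lemma~\ref{lemma8} and Corollary~\ref{cor4} apply, e.g.\ $\xi_0=\lambda^{-1+\delta}$ for a small $\delta>0$, and substitute the known asymptotics on both sides.

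More precisely, at such a $\xi_0$ I would insert the approximation $f_+(\xi_0,\lambda)=f_0(\xi_0,\lambda)+O(\xi_0^{-1/2}\lambda^{1/2-\eps})$ from Lemma~\ref{lemma8}, together with the analogous bound on $\partial_\xi f_+(\xi_0,\lambda)-\partial_\xi f_0(\xi_0,\lambda)$ obtained by differentiating the Volterra equation \eqref{24} and reusing the estimate \eqref{eq:G0_bd0} on $G_0$ and $\partial_\xi G_0$. Similarly, Corollary~\ref{cor4} lets me replace $u_j(\xi_0,\lambda)$ by $u_j(\xi_0)$ and $\partial_\xi u_j(\xi_0,\lambda)$ by $u_j'(\xi_0)$ up to a multiplicative factor $(1+O((\xi_0\lambda)^2))$. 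Then $f_0(\xi_0,\lambda)=c_0\sqrt{\xi_0\lambda}\,H_0^{(+)}(\xi_0\lambda)$ can be expanded via \eqref{26}:
\[
f_0(\xi_0,\lambda)=c_0\sqrt{\xi_0\lambda}\Big[1+i\varkappa+\tfrac{2i}{\pi}\log\lambda+\tfrac{2i}{\pi}\log\xi_0\Big]+O\bigl(\xi_0^{5/2}\lambda^{5/2}|\log(\xi_0\lambda)|\bigr),
\]
with a companion expansion for $\partial_\xi f_0(\xi_0,\lambda)$ coming from $(H_0^{(+)})'=-H_1^{(+)}$ and its small-argument behavior.

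The computation of $W(f_0(\cdot,\lambda),u_j(\xi_0))$ using the asymptotics of $u_0,u_1$ from Lemma~\ref{lemma7} then produces, for $a_+$, a main term proportional to $2^{1/4}c_0\sqrt\lambda[1+ic_1\log\lambda+ic_3]$, where the constant $c_3$ absorbs the constant $\varkappa$ from \eqref{26} and the constant $c_2$ from Lemma~\ref{lemma7}; and for $b_+$, a main term proportional to $i2^{-1/4}c_0c_1\sqrt\lambda$. The $\log\xi_0$-dependent pieces must and do cancel, since both Wronskians are independent of $\xi_0$; this in effect fixes the identification of the $u_0$- and $u_1$-coefficients in $f_0(\xi_0,\lambda)$.

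It remains to track the errors. The contributions are: (i) $\xi_0^{-1/2}\lambda^{1/2-\eps}$ from Lemma~\ref{lemma8}, multiplied by derivatives of $u_j$ of size at most $\xi_0^{1/2}|\log\xi_0|$, giving $O(\lambda^{1/2-\eps}|\log\xi_0|)$; (ii) $(\xi_0\lambda)^2$ relative error from Corollary~\ref{cor4} multiplied by the main terms of size $\sqrt\lambda|\log\lambda|$, giving $O(\xi_0^2\lambda^{5/2}|\log\lambda|)$; and (iii) the Hankel remainder $\xi_0^{5/2}\lambda^{5/2}|\log(\xi_0\lambda)|$ multiplied by $u_j$-factors of size $\xi_0^{-1/2}|\log\xi_0|$. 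Choosing $\xi_0=\lambda^{-1+\delta}$ with $\delta$ small enough relative to $\eps$ makes each of these $O(\lambda^{1-\eps})$, which is the main technical point of the argument. The hardest part is verifying the derivative bound $\partial_\xi f_+(\xi_0,\lambda)-\partial_\xi f_0(\xi_0,\lambda)=O(\xi_0^{-3/2}\lambda^{1/2-\eps})$ at the required level, since a naive differentiation of \eqref{24} produces a logarithmic loss that has to be absorbed by a second Volterra iteration together with the symbol-type bounds on $V_1$ afforded by Corollary~\ref{cor:Vdec}.
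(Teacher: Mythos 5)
Your strategy coincides with the paper's: both compute $a_+=W(f_+(\cdot,\lambda),u_1(\cdot,\lambda))$ and $b_+=-W(f_+(\cdot,\lambda),u_0(\cdot,\lambda))$ at a single intermediate point where Lemma~\ref{lemma8} and Corollary~\ref{cor4} both apply, expand $f_0$ through the Hankel asymptotics \eqref{26} and $u_0,u_1$ through Lemma~\ref{lemma7}, and rely on the cancellation of the $\log\xi_0$ terms (the derivative analogue of Lemma~\ref{lemma8} that you flag is indeed needed; the paper records it as Lemma~\ref{lemma10}). The gap is in your error budget, which as written does not close. Your item (i) is claimed to end up $O(\lambda^{1-\eps})$, but $O(\lambda^{1/2-\eps}|\log\xi_0|)$ is never $O(\lambda^{1-\eps})$ for any admissible $\xi_0$; the estimate is too coarse because it pairs the $f_+$-error with the size of $u_j$ instead of respecting the Wronskian structure. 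Pairing correctly --- $(f_+-f_0)=O(\xi_0^{-1/2}\lambda^{1/2-\eps})$ against $u_j'\sim\xi_0^{-1/2}\log\xi_0$, and $(f_+'-f_0')=O(\xi_0^{-3/2}\lambda^{1/2-\eps})$ against $u_j\sim\xi_0^{1/2}\log\xi_0$ --- gives $O(\xi_0^{-1}\lambda^{1/2-\eps}\log\xi_0)$, which is acceptable only when $\xi_0\gtrsim\lambda^{-1/2}$, i.e.\ $\delta\le\tfrac12$ in your parametrization. On the other hand, your items (ii) and (iii) are of size $\xi_0^{2}\lambda^{5/2}(\log\lambda)^{2}=\lambda^{\frac12+2\delta}(\log\lambda)^{2}$, which is $O(\lambda^{1-\eps})$ only if $\delta\ge\tfrac14-\tfrac{\eps}{2}$. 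So the prescription ``$\delta$ small relative to $\eps$'' makes (ii) and (iii) too large: with $\xi_0$ near $\lambda^{-1}$ the relative error $(\xi_0\lambda)^2$ from Corollary~\ref{cor4} and the Hankel remainder are not small compared with the main term $\sqrt{\lambda}\,|\log\lambda|$.

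The two requirements pull in opposite directions and confine the evaluation point to (roughly) $\lambda^{-1/2}\les\xi_0\les\lambda^{-3/4}$. The paper simply takes the endpoint $\xi=\lambda^{-1/2}$, where the corrected item (i) is $O(\lambda^{1-\eps}|\log\lambda|)$, harmless since $\eps>0$ is arbitrary, and (ii)--(iii) are $O(\lambda^{3/2}(\log\lambda)^2)$. Once you fix the pairing and move $\xi_0$ into this window, the rest of your computation (cancellation of $\log\xi_0$, identification of $c_3$ from $\varkappa$ and $c_2$) goes through exactly as in the paper.
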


\begin{proof}
By Corollary~\ref{cor1} we have $a_+(\lambda )=f_+(\xi,\lambda
)u'_1(\xi,\lambda )-f'_+(\xi,\lambda )u_1(\xi,\lambda )$. Hence
Lemma~\ref{lemma8} and Corollary~\ref{cor4} applied to
$\xi=\lambda^{-1/2}$ yield,
\begin{align*}
c_0^{-1}2^{1/4}a_+&= \sqrt{\lambda\xi}\,H_0(\xi\lambda )\frac{1}{2}\xi^{-1/2}(\log\xi +c_2+2)\\
&-\left(\frac{1}{2}\xi^{-1/2}\sqrt{\lambda}\,H_0(\xi\lambda
)+\sqrt{\xi\lambda}\,H_0'(\xi\lambda )
\lambda \right)\xi^{1/2}(\log\xi +c_2)+O(\lambda^{1-\eps})\\
&= \sqrt{\lambda}\,H_0(\xi\lambda )-\sqrt{\xi\lambda}\: \frac{ic_1}{\xi}\sqrt{\xi}(\log\xi +c_2)+O(\lambda^{1-\eps})\\
&= \sqrt{\lambda}(1+ic_1\log (\xi\lambda )+i\varkappa -ic_1\log\xi -ic_1c_2)+O(\lambda^{1-\eps})\\
&= \sqrt{\lambda}(1+ic_1\log\lambda +ic_3)+O(\lambda^{1-\eps}),
\end{align*}
as claimed.  Note that $c_3=\varkappa -c_1c_2$. Similarly,
\begin{align*}
-c_0^{-1}2^{\frac{1}{4}}b_+&=\sqrt{\lambda\xi}\, H_0(\xi\lambda )
\frac{1}{2}\xi^{-1/2}-\xi^{1/2}\left(\frac{1}{2}\xi^{-1/2}
\sqrt{\lambda}\, H_0(\lambda\xi )+\sqrt{\xi\lambda}\, H_0'(\xi\lambda )\lambda \right)\\
&\qquad\qquad\qquad +O(\lambda^{1-\eps})\\
&=-\xi\sqrt{\lambda}\:\frac{ic_1}{\xi\lambda}\,\lambda+O(\lambda^{1-\eps})
=-ic_1\sqrt{\lambda}+O(\lambda^{1-\eps}),
\end{align*}
and the lemma follows.
\end{proof}

Using the expressions for $a_{+}$ and $b_{+}$ above, we obtain the
following

\begin{corollary}\label{cor5}
Let $\lambda >0$ be small.  Then \beeq\label{37} f_+(\xi, \lambda
)=c_0\sqrt{\lambda\langle\xi\rangle}\left(1+ic_1\log
(\lambda\langle\xi\rangle
)+ic_4+O(\lambda^{\frac{1}{2}-\eps})+O(\langle\xi\rangle^{-1}\log\langle\xi\rangle
)\right) \eneq for $0<\xi <\lambda^{-1}$, whereas for
$-\lambda^{-1}<\xi <0,$ \beeq\label{38} f_+(\xi, \lambda
)=c_0\sqrt{\lambda\langle\xi\rangle}\left(1+ic_1\log
(\lambda\langle\xi\rangle^{-1}
)+ic_5+O(\lambda^{\frac{1}{2}-\eps})+O(\langle\xi\rangle^{-1}\log\langle\xi\rangle
)\right) \eneq
Here $c_1$ is as above and $c_4, c_5$ are real constants.
\end{corollary}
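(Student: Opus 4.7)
The plan is to start from the Wronskian-based expansion of Corollary~\ref{cor1},
\[
f_+(\xi,\lambda)=a_+(\lambda)\,u_0(\xi,\lambda)+b_+(\lambda)\,u_1(\xi,\lambda),
\]
and substitute the small-$\lambda$ asymptotics of $a_+,b_+$ from Lemma~\ref{lemma9} together with the small-energy identity $u_j(\xi,\lambda)=u_j(\xi)(1+O((\xi\lambda)^2))$ supplied by Corollary~\ref{cor4}; the large-$\xi$ asymptotics of $u_0,u_1$ are then read off from Lemma~\ref{lemma7}. The ranges $|\xi|\les 1$ and $1\ll|\xi|<\lambda^{-1}$ are handled separately, which accounts for the $\langle\xi\rangle$ (rather than $\xi$) appearing in the conclusion; for $\xi$ close to $\lambda^{-1}$ one can alternatively invoke Lemma~\ref{lemma8} together with the small-argument expansion~\eqref{26} of $H_0^{(+)}$ to reach the same form.

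In the principal case $1\ll\xi<\lambda^{-1}$, plugging Lemmas~\ref{lemma7},~\ref{lemma9} and Corollary~\ref{cor4} into the decomposition gives
\begin{align*}
a_+(\lambda)\,u_0(\xi,\lambda)&=c_0\sqrt{\lambda\xi}\,\bigl(1+ic_1\log\lambda+ic_3\bigr)+\text{error},\\
b_+(\lambda)\,u_1(\xi,\lambda)&=ic_0c_1\sqrt{\lambda\xi}\,\bigl(\log\xi+c_2\bigr)+\text{error}.
\end{align*}
Adding these and using the identity $\log\lambda+\log\xi=\log(\lambda\xi)$ produces the main term
\[
c_0\sqrt{\lambda\xi}\,\bigl(1+ic_1\log(\lambda\xi)+ic_4\bigr),\qquad c_4:=c_3+c_1c_2,
\]
which matches~\eqref{37}. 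Propagating the errors from Lemma~\ref{lemma9} (which contributes $O(\lambda^{1-\eps})$ to $a_+,b_+$), from Corollary~\ref{cor4} (a multiplicative $O((\xi\lambda)^2)$ on $u_j$) and from Lemma~\ref{lemma7} (tails $O(\xi^{-1}\log\xi)$), and finally dividing through by the natural prefactor $c_0\sqrt{\lambda\langle\xi\rangle}$, produces the stated remainder $O(\lambda^{1/2-\eps})+O(\langle\xi\rangle^{-1}\log\langle\xi\rangle)$.

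For the left range $-\lambda^{-1}<\xi<0$, the conical-end asymptotics of Lemma~\ref{lem:basic_asymp} and Lemma~\ref{lemma7} apply at the left end after replacing $\xi$ by $|\xi|$; the one substantive difference is that $u_1(\xi)=r^{1/2}(\xi)\int_0^\xi r^{-1}(\eta)\,d\eta$ now picks up a minus sign from the orientation of the integral, giving
\[
u_1(\xi)=-2^{1/4}|\xi|^{1/2}\bigl(\log|\xi|+\tilde c_2+O(|\xi|^{-1})\bigr)\qquad(\xi\to-\infty)
\]
for a constant $\tilde c_2$ coming from the left-end geometry. Running the same calculation yields $\log\lambda-\log|\xi|=\log(\lambda/|\xi|)$ inside the brackets and establishes~\eqref{38} with $c_5:=c_3-c_1\tilde c_2$.

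The principal technical obstacle is bookkeeping: individually $\log\lambda$ and $\log\langle\xi\rangle$ diverge on opposite ends of the range $\lambda\xi\le 1$, and only their sum (respectively difference at the left end) remains bounded; one must pair them correctly inside the decomposition $a_+u_0+b_+u_1$ so that the cancellation is exhibited, and then verify that the $O(\lambda^{1-\eps})$ error from Lemma~\ref{lemma9}, when amplified by the $\sqrt{\xi}\log\xi$ growth of $u_1$, still leaves the claimed $O(\lambda^{1/2-\eps})$ remainder after extracting the $\sqrt{\lambda\langle\xi\rangle}$ prefactor, using $\log\xi\le|\log\lambda|\les\lambda^{-\eps/2}$ to absorb the stray logarithmic factor into a slightly worsened exponent.
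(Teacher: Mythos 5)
Your proposal is correct and is essentially the paper's own argument: the paper proves Corollary~\ref{cor5} precisely by inserting the expansions of $a_+(\lambda)$, $b_+(\lambda)$ from Lemma~\ref{lemma9} and of $u_0(\xi,\lambda)$, $u_1(\xi,\lambda)$ from Corollary~\ref{cor4} (with Lemma~\ref{lemma7}) into the decomposition~\eqref{20}, exactly as you do, with the same log-recombination $\log\lambda+\log\xi=\log(\lambda\xi)$ (and the sign flip of $u_1$ at the left end giving $\log(\lambda\langle\xi\rangle^{-1})$). Your extra bookkeeping of the error terms and the remark about using Lemma~\ref{lemma8} near $\xi\sim\lambda^{-1}$ only make explicit what the paper leaves implicit.
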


\begin{proof}
This follows by inserting our asymptotic expansions for $a_+(\lambda
)$, $b_+(\lambda )$,
 $u_0(\xi,\lambda )$, and $u_1(\xi,\lambda )$ into (\ref{20}).
\end{proof}

 We also need some information about certain partial derivatives of
 $f_{+}(\xi,\lambda)$. This is provided by

\begin{lemma}\label{lemma10}
For $\lambda >0$ small and $|\log\lambda|^2\le \xi \ll \lambda^{-1}$
we have
\begin{align*}
\partial_{\xi}f_+(\xi,\lambda )&=\partial_{\xi} f_0(\xi,\lambda )+O(\xi^{-3/2}\lambda^{\frac{1}{2}-\eps})\\
\partial_{\lambda}f_+(\xi,\lambda )&=\partial_{\lambda} f_0(\xi,\lambda )+O(\xi^{-1/2}\lambda^{-\frac{1}{2}-\eps})\\
\partial^2_{\xi\lambda}f_+(\xi,\lambda )&=\partial^2_{\xi\lambda} f_0(\xi,\lambda )+O(\xi^{-3/2}\lambda^{-\frac{1}{2}-\eps})
\end{align*}
with $\eps >0$ arbitrary.
\end{lemma}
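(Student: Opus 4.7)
The strategy is to differentiate the Volterra representation~\eqref{24} in $\xi$, $\lambda$, and $\xi\lambda$, and to bound the resulting identities using the two ingredients already employed in Lemma~\ref{lemma8}: the pointwise bound $|f_+(\xi,\lambda)|\les m(\xi\lambda)$ established there, and the small/large argument Hankel asymptotics. Three elementary facts drive the error exponents: $G_0(\xi,\xi;\lambda)\equiv 0$ kills every boundary contribution arising from $\xi$-differentiation; the identity $(H_0^{(+)})'=-H_1^{(+)}$ together with~\eqref{26} yields $|\partial_\xi f_0(\xi,\lambda)|\les \sqrt{\lambda/\xi}\,|\log\lambda|$, an extra $\xi^{-1}$ versus $|f_0|$; and the explicit $\lambda^{-1}$ in~\eqref{25} combined with $|\partial_\lambda f_0|\les \lambda^{-1}m(\xi\lambda)$ gives $|\partial_\lambda G_0|\les \lambda^{-2}m(\xi\lambda)m(\eta\lambda)$, an extra $\lambda^{-1}$ versus $G_0$.

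For the $\xi$-derivative, differentiating~\eqref{24} and using $G_0(\xi,\xi;\lambda)=0$ yields the explicit identity
\[
\partial_\xi f_+(\xi,\lambda)-\partial_\xi f_0(\xi,\lambda) = \int_\xi^\infty \partial_\xi G_0(\xi,\eta;\lambda)\, V_1(\eta)\, f_+(\eta,\lambda)\, d\eta.
\]
The bound $|\partial_\xi G_0(\xi,\eta;\lambda)|\les \lambda^{-1/2}\xi^{-1/2}|\log\lambda|\,m(\eta\lambda)$ together with $|V_1|\les\eta^{-3}$ and $|f_+|\les m(\eta\lambda)$ give $O(\xi^{-3/2}\lambda^{1/2-\eps})$ upon splitting at $\eta=\lambda^{-1}$, precisely as in Lemma~\ref{lemma8}; the $\lambda^{-\eps}$ absorbs the accumulated log factors.

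For the $\lambda$-derivative, differentiating~\eqref{24} in $\lambda$ yields a Volterra equation for $w:=\partial_\lambda f_+-\partial_\lambda f_0$,
\[
w(\xi,\lambda) = S(\xi,\lambda) + \int_\xi^\infty G_0(\xi,\eta;\lambda)\, V_1(\eta)\, w(\eta,\lambda)\, d\eta,
\]
with source $S(\xi,\lambda):=\int_\xi^\infty \partial_\lambda G_0\, V_1\, f_+\, d\eta + \int_\xi^\infty G_0\, V_1\, \partial_\lambda f_0\, d\eta$. The Volterra kernel is exactly the one controlled in Lemma~\ref{lemma8} (kernel measure $\les 1$), so Lemma~\ref{4'} applies and gives $|w(\xi,\lambda)|\les \|S(\cdot,\lambda)\|_{L^\infty([\xi,\infty))}$. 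Using $|\partial_\lambda G_0|\les \lambda^{-2}m(\xi\lambda)m(\eta\lambda)$ and $|\partial_\lambda f_0|\les \lambda^{-1}m(\xi\lambda)$, each summand in $S$ picks up one extra $\lambda^{-1}$ compared to the quantity estimated in Lemma~\ref{lemma8}, yielding $|S(\xi',\lambda)|\les (\xi')^{-1/2}\lambda^{-1/2-\eps}$. Since the integration range in $S$ shrinks as $\xi'$ grows, $S$ is essentially decreasing in $\xi'$, and we obtain the required $O(\xi^{-1/2}\lambda^{-1/2-\eps})$.

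For the mixed derivative, $\xi$-differentiating the identity for $\partial_\lambda f_+$ and noting that $G_0(\xi,\xi;\lambda)\equiv 0$ forces both $G_0$ and $\partial_\lambda G_0$ to vanish on the diagonal $\eta=\xi$, we obtain the direct formula
\[
\partial^2_{\xi\lambda}(f_+-f_0) = \int_\xi^\infty \partial^2_{\xi\lambda} G_0\, V_1\, f_+\, d\eta + \int_\xi^\infty \partial_\xi G_0\, V_1\, \partial_\lambda f_+\, d\eta,
\]
in which $\partial_\lambda f_+$ has just been bounded. Using $|\partial^2_{\xi\lambda} G_0|\les \lambda^{-3/2}\xi^{-1/2}|\log\lambda|\,m(\eta\lambda)$ (combining the $\xi^{-1}$ gain from $\partial_\xi$ with the $\lambda^{-1}$ gain from $\partial_\lambda$) and integrating as before yields the required $O(\xi^{-3/2}\lambda^{-1/2-\eps})$. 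The main obstacle throughout is the careful tracking of the logarithmic $|\log(\xi\lambda)|$ and $|\log\lambda|$ factors produced by the small-argument Hankel asymptotics~\eqref{26}; finitely many accumulate at each step, but all are absorbable into the arbitrary $\lambda^{-\eps}$ loss, which is precisely why the statement uses $\lambda^{\pm 1/2-\eps}$ rather than polynomial logarithmic corrections.
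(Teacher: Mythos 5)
Your proposal follows exactly the paper's (one-line) proof, which simply says the lemma ``follows by taking derivatives in Lemma~\ref{lemma8}'': differentiating the representation~\eqref{24}, using $G_0(\xi,\xi;\lambda)=0$, and re-running the Green's-function/Volterra bounds with an extra $\xi^{-1}$ per $\partial_\xi$ and $\lambda^{-1}$ per $\partial_\lambda$ is precisely the intended argument, and your exponent bookkeeping is correct. One small imprecision: in the regime $\eta\lambda>1$ one only has $|\partial_\lambda f_0(\eta,\lambda)|\les \eta$ (not $\les\lambda^{-1}$) and correspondingly $|\partial_\lambda G_0(\xi,\eta;\lambda)|\les \lambda^{-1} m(\xi\lambda)\,\eta$, but the $\eta^{-3}$ decay of $V_1$ makes these tail contributions land within the stated $O$-terms anyway, so the conclusion is unaffected.
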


\begin{proof}
This follows by taking derivatives in Lemma~\ref{lemma8}.
\end{proof}

To be able to carry out the analysis, one also needs to understand
the derivative of the Wronskian. To that end we have

\begin{corollary}\label{cor6}
Then,  with $\eps>0$ arbitrary but fixed,
\begin{equation}
  \begin{aligned}\label{39}
a_+'(\lambda )&=\frac{1}{2}2^{1/4}c_0\lambda^{-1/2}(1+ic_3+2ic_1+ic_1\log\lambda )+O(\lambda^{-\eps})\\
b'_+(\lambda
)&=\frac{i}{2}2^{-1/4}c_0c_1\lambda^{-1/2}+O(\lambda^{-\eps})
\end{aligned}
\end{equation}
as $\lambda\to0+$.
\end{corollary}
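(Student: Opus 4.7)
The approach mirrors the proof of Lemma~\ref{lemma9}, but now with one $\lambda$-derivative added. The starting point is the Wronskian representation from Corollary~\ref{cor1}, namely
\[
a_+(\lambda)=f_+(\xi,\lambda)\,u_1'(\xi,\lambda)-f_+'(\xi,\lambda)\,u_1(\xi,\lambda),\qquad
b_+(\lambda)=-f_+(\xi,\lambda)\,u_0'(\xi,\lambda)+f_+'(\xi,\lambda)\,u_0(\xi,\lambda),
\]
which are independent of $\xi$. Differentiating in $\lambda$ produces four-term expressions of the form $\partial_\lambda f_+\cdot u_1'+f_+\cdot\partial^2_{\lambda\xi}u_1-\partial^2_{\lambda\xi}f_+\cdot u_1-f_+'\cdot\partial_\lambda u_1$ (and similarly for $b_+'$). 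The plan is to evaluate these identities at an intermediate scale $\xi=\lambda^{-1/2}$, which lies inside the overlap region $|\log\lambda|^2\le\xi\ll\lambda^{-1}$ where both Lemma~\ref{lemma10} and Corollary~\ref{cor4} are available.

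First I would substitute $f_+$, $\partial_\lambda f_+$, $\partial_\xi f_+$, $\partial^2_{\xi\lambda}f_+$ from Lemma~\ref{lemma10}, replacing them by $f_0$, $\partial_\lambda f_0$, etc., modulo errors of size $O(\xi^{-1/2}\lambda^{1/2-\eps})$, $O(\xi^{-1/2}\lambda^{-1/2-\eps})$, $O(\xi^{-3/2}\lambda^{1/2-\eps})$ and $O(\xi^{-3/2}\lambda^{-1/2-\eps})$, respectively. Multiplying these against the $u_0,u_1$ factors and their derivatives (which grow like $\xi^{1/2}$ or $\xi^{1/2}\log\xi$, and $\xi^{5/2}$ or $\xi^{3/2}$ after the $\lambda$-derivative) and substituting $\xi=\lambda^{-1/2}$ produces errors that are uniformly $O(\lambda^{-\eps})$ after using Corollary~\ref{cor4} to absorb the $(1+O((\xi\lambda)^2))=1+O(\lambda)$ factors. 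The only subtlety is that $\partial_\lambda u_j$ itself carries an explicit prefactor of $\lambda$, so the apparently worst term $\partial^2_{\lambda\xi}f_+\cdot u_1$ is tamed precisely because the leading $\partial^2_{\lambda\xi}f_0$ falls off at the rate $|H_0'(\xi\lambda)|\cdot\sqrt\lambda$.

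Second I would collect the leading-order terms. The structure is exactly the one encountered in Lemma~\ref{lemma9}, except that each $H_0^{(+)}(\xi\lambda)$ is replaced by its derivative or by an extra factor of $\log(\xi\lambda)=\log\lambda+\log\xi$; after the same logarithmic cancellations between $\log(\xi\lambda)$ and $\log\xi$, the residue is independent of $\xi$. Direct inspection shows the leading term in $c_0^{-1}2^{1/4}a_+'(\lambda)$ is $\tfrac12\lambda^{-1/2}(1+ic_1\log\lambda+ic_3+2ic_1)$, where the additional $2ic_1$ comes from differentiating $\sqrt\lambda$ inside $\sqrt\lambda\,H_0^{(+)}(\xi\lambda)$ and from differentiating the argument of $H_0^{(+)}$ at $\xi=\lambda^{-1/2}$; and the leading term in $-c_0^{-1}2^{1/4}b_+'(\lambda)$ is $-\tfrac{i}{2}c_1\lambda^{-1/2}$, matching the claim.

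The main obstacle is strictly the bookkeeping: one must keep track of which $O$-terms come with logs (from $u_1$, from $\log(\xi\lambda)$ expansions) and verify that at $\xi=\lambda^{-1/2}$ these logarithmic losses are absorbed by the polynomial $\eps$-loss, so that every error is indeed $O(\lambda^{-\eps})$. No new analytic input is required beyond Lemma~\ref{lemma10} and Corollary~\ref{cor4}; the calculation is a parallel of the one in Lemma~\ref{lemma9} with one extra $\partial_\lambda$, justifying the heuristic ``differentiate the expansion of Lemma~\ref{lemma9} formally.''
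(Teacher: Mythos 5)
Your proposal is correct and follows essentially the same route as the paper: differentiate the Wronskian representations $a_+=W(f_+,u_1)$, $b_+=-W(f_+,u_0)$ in $\lambda$, evaluate at the intermediate scale $\xi=\lambda^{-1/2}$, replace $f_+$ and its derivatives by $f_0$ via Lemma~\ref{lemma10}, use Corollary~\ref{cor4} for the $u_j$ (noting the explicit $\lambda$ in $\partial_\lambda u_j$, $\partial^2_{\lambda\xi}u_j$ renders those terms $O(\lambda^{1/2-\eps})$), and extract the leading behavior from the small-argument expansion of $H_0^{(+)}$ with the $\log\xi$ cancellations. The only quibble is the $2^{\pm 1/4}$ prefactor bookkeeping in your leading term for $a_+'$, which reproduces the same harmless normalization slip present in the paper's own computation and does not affect the argument.
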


\begin{proof}
In view of the preceding,
\begin{align}\label{40}
a'_+(\lambda )&= W(\partial_{\lambda}f_+,u_1)+W(f_+,\partial_{\lambda}u_1)\nonumber\\
&= W(\partial_{\lambda}f_0,u_1)+W(f_0,\partial_{\lambda}u_1)+O(\lambda^{-\eps})\nonumber\\
&= \partial_{\lambda}[c_0\sqrt{\lambda\xi}H_0(\lambda\xi )]\left(\frac{1}{2}\xi^{-1/2}(\log\xi +c_2)+\xi^{-1/2}\right)2^{1/4}\\
&\quad-\partial^2_{\lambda\xi}[c_0\sqrt{\lambda\xi}H_0(\lambda\xi )]\xi^{1/2}(\log\xi +c_2)\cdot 2^{1/4}\nonumber\\
&\quad+c_0\sqrt{\lambda\xi}H_0(\lambda\xi )\cdot\frac{5}{4}\cdot 2^{1/4}\lambda \left(\xi^{3/2}\log\xi +\left(\frac{2}{5}+c_3\right)\xi^{3/2}\right)\nonumber\\
&\quad-c_0\partial_{\xi}[\sqrt{\lambda\xi}H_0(\lambda\xi
)]\frac{1}{2}2^{1/4}\lambda (\xi^{5/2}\log\xi
+c_3\xi^{5/2})+O(\lambda^{-\eps}).\nonumber
\end{align}
Evaluating at $\xi=\lambda^{-1/2}$, one obtains that the third and
fourth terms in (\ref{40}) are $O(\lambda^{\frac{1}{2}-\eps})$, and
thus error terms.  Thus,
\begin{align*}
2^{-1/4}c_0^{-1}a_+'(\lambda )&=\left(\frac{1}{2}\lambda^{-1/2}(1+ic_1\log (\lambda\xi )+i\varkappa )+ic_1\lambda^{-1/2}\right)\left(\frac{1}{2}(c_2+\log\xi )+1\right)\\
&-\left(\frac{1}{4}\lambda^{-1/2}(1+ic_1\log (\lambda\xi
)+i\varkappa )+ic_1\lambda^{-1/2}\right)(\log\xi +c_2)
+O(\lambda^{-\eps})\\
&=\frac{1}{2}\lambda^{-1/2}(1+ic_1\log (\lambda\xi )+i\varkappa )+ic_1\lambda^{-1/2}
\end{align*}
which further simplifies to
\begin{align*}
&\quad-\frac{ic_1}{2}\lambda^{-1/2}(\log\xi +c_2)+O(\lambda^{-\eps})\\
&=\frac{1}{2}\lambda^{-1/2}(1+ic_1\log\lambda+i\varkappa+2ic_1-ic_1c_2)+O(\lambda^{-\eps})\\
&=\frac{1}{2}\lambda^{-1/2}(1+ic_3+2ic_1+ic_1\log\lambda
)+O(\lambda^{-\eps}).
\end{align*}
Similarly,
\begin{align*}
2^{-1/4}c_0^{-1}b_+'(\lambda )&=
\frac{1}{2}\left(\frac{1}{2}\lambda^{-1/2}(1+ic_1\log (\lambda\xi
)+i\varkappa )
+ic_1\lambda^{-1/2}\right)\\
&\quad-\left(\frac{1}{4}\lambda^{-1/2}(1+ic_1\log (\lambda\xi )+i\varkappa) +ic_1\lambda^{-1/2}\right)+O(\lambda^{-\eps})\\
&= -\frac{1}{2}ic_1\lambda^{-1/2}+O(\lambda^{-\eps}),
\end{align*}
as claimed.
\end{proof}

\begin{remark}
  \label{rem:left-right} Recall that this analysis was carried out
  assuming that $\calM$ is conical on the right. If $\calM$ is conical on the left, then
  the same analysis applies. In fact, if $\calM$ is symmetric, i.e.,
  $r(x)=r(-x)$,  then by Corollary~\ref{cor1}
  $a_-(\lambda)=a_+(\lambda)$ and $b_-(\lambda)=-b_+(\lambda)$. If it
  is not symmetric but still conical at both ends, then these
  relations still hold for the asymptotic expansions. i.e.,
  \begin{align*}
a_-(\lambda )&=2^{1/4}c_0\sqrt{\lambda}(1+ic_1\log\lambda +ic_3)+O(\lambda^{1-\eps})\\
b_-(\lambda
)&=-i2^{-1/4}c_0c_1\sqrt{\lambda}+O(\lambda^{1-\eps}),\nonumber
\end{align*}
as $\lambda\to0+$ where $c_0$ etc.~are as in Lemma~\ref{lemma9}. The
same of course applies to $a_-'$ and $b_-'$.
\end{remark}

We end the perturbative analysis with a description of the
oscillatory behavior of $f_+(\xi,\lambda)$ for $\lambda\xi>1$.

\begin{lemma}\label{lemma11}
Let $m_+(\xi,\lambda ):=e^{-i\lambda\xi}f_+(\xi,\lambda )$.  Then,
provided $\lambda >0$ is small and $\lambda\xi >1$,
\begin{align}\label{41}
|m_+(\xi,\lambda )-1|&\les (\lambda\xi )^{-1}\\
|\partial_{\lambda}m_+(\xi,\lambda )|&\les
\lambda^{-2}\xi^{-1}\nonumber
\end{align}
\end{lemma}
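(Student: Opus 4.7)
The plan is to recast the problem as a Volterra equation for $m_+$ directly, and then apply Lemma~\ref{4'} twice, with the second application requiring an integration by parts to handle an otherwise divergent integral.

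First, substituting $f_+(\xi,\lambda) = e^{i\lambda\xi}\, m_+(\xi,\lambda)$ into the Volterra equation \eqref{eq:Volt} and using the identity $e^{i\theta}\sin\theta = (e^{2i\theta}-1)/(2i)$ will produce
\[
m_+(\xi,\lambda) = 1 + \int_{\xi}^{\infty} \tilde K(\xi,\eta;\lambda)\, V(\eta)\, m_+(\eta,\lambda)\, d\eta, \qquad \tilde K(\xi,\eta;\lambda) := \frac{e^{2i\lambda(\eta-\xi)} - 1}{2i\lambda}.
\]
Since $|\tilde K(\xi,\eta;\lambda)| \le \min(\eta - \xi,\, 1/\lambda)$ and $|V(\eta)| \les \eta^{-2}$, the hypothesis $\lambda\xi > 1$ yields
\[
\int_{\xi}^{\infty} |\tilde K(\xi,\eta;\lambda)|\, |V(\eta)|\, d\eta \les \lambda^{-1}\xi^{-1} \le 1.
\]
Lemma~\ref{4'} then gives $|m_+| \les 1$ uniformly in the regime $\lambda\xi>1$, and a single re-insertion into the Volterra equation refines this to the first claim $|m_+(\xi,\lambda) - 1| \les (\lambda\xi)^{-1}$.

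Next, for the derivative bound, differentiating the Volterra equation in $\lambda$ produces the second Volterra equation
\[
\partial_\lambda m_+ = I(\xi,\lambda) + \int_{\xi}^{\infty} \tilde K\, V\, \partial_\lambda m_+\, d\eta, \qquad I(\xi,\lambda) := \int_{\xi}^{\infty} \partial_\lambda \tilde K\cdot V\cdot m_+\, d\eta.
\]
A direct computation gives $\partial_\lambda \tilde K = \lambda^{-1}(\eta-\xi)\,e^{2i\lambda(\eta-\xi)} + i(e^{2i\lambda(\eta-\xi)}-1)/(2\lambda^2)$. The second summand will contribute $\les \lambda^{-2}\int_\xi^\infty |V|\, d\eta \les \lambda^{-2}\xi^{-1}$ without difficulty. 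The first summand is the \emph{main obstacle}: the naive bound $|\partial_\lambda \tilde K| \les (\eta-\xi)/\lambda$ combined with $|V|\les \eta^{-2}$ produces an integrand of size $(\lambda\eta)^{-1}$ for large $\eta$, which is logarithmically divergent. To exploit the oscillation of $e^{2i\lambda(\eta-\xi)}$, I will integrate by parts in $\eta$:
\[
\frac{1}{\lambda}\int_{\xi}^{\infty} (\eta-\xi)\, e^{2i\lambda(\eta-\xi)}\, V\, m_+\, d\eta = -\frac{1}{2i\lambda^2} \int_{\xi}^{\infty} e^{2i\lambda(\eta-\xi)}\, \partial_\eta\big[(\eta-\xi)\, V(\eta)\, m_+(\eta)\big]\, d\eta,
\]
with vanishing boundary terms (at $\eta = \xi$ because $\eta-\xi = 0$; at infinity because $(\eta-\xi)V(\eta) \to 0$). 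The expansion $\partial_\eta[(\eta-\xi)Vm_+] = Vm_+ + (\eta-\xi)V'm_+ + (\eta-\xi)Vm_+'$ has each term bounded by $\eta^{-2}$: the estimate $|V'| \les \eta^{-3}$ comes from the symbol-type decay in Corollary~\ref{cor:Vdec}, and $|m_+'(\eta)| \les \eta^{-1}$ follows by integrating the first-order ODE identity $(e^{2i\lambda\eta}\, m_+')' = e^{2i\lambda\eta}\, V m_+$ from $\eta$ to $\infty$. Thus $|I(\xi,\lambda)| \les \lambda^{-2}\xi^{-1}$.

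Finally, since $I(\eta,\lambda) = O(\lambda^{-2}\eta^{-1})$ is monotonically decreasing in $\eta$ and the kernel bound $\int |\tilde K\, V|\, d\eta \les 1$ persists, a second application of Lemma~\ref{4'} to the Volterra equation for $\partial_\lambda m_+$ will yield $|\partial_\lambda m_+(\xi,\lambda)| \les \lambda^{-2}\xi^{-1}$, completing the proof.
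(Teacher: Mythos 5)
Your argument is correct, but it takes a genuinely different route from the paper's. The paper does not return to the flat Volterra equation \eqref{eq:Volt} at all: it uses the representation \eqref{24} relative to the exact inverse-square model $\calH_0$, writes $m_+ = m_0 + \int_\xi^\infty \widetilde G_0 V_1 m_+\,d\eta$ with $m_0=e^{-i\lambda\xi}f_0$ built from the Hankel function, and then simply uses $m_0(\xi,\lambda)=1+O((\xi\lambda)^{-1})$ together with the crude kernel bounds $|\widetilde G_0|\les\eta$ and $|\partial_\lambda\widetilde G_0|\les\eta/\lambda$ for $\eta>\xi>\lambda^{-1}$; since the perturbing potential is $V_1=O(\eta^{-3})$, these bounds are directly integrable and no oscillation needs to be exploited. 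You instead compare with the free equation, so your potential is the full $V=O(\eta^{-2})$ and the naive estimate for $\partial_\lambda m_+$ is logarithmically divergent; you repair this by one stationary-phase-type integration by parts in $\eta$, paying with the symbol bound $|V'|\les\eta^{-3}$ (available from Corollary~\ref{cor:Vdec}) and the auxiliary bound $|\partial_\eta m_+|\les\eta^{-1}$. This is legitimate, and in fact it is precisely the device the paper itself uses in the high-energy regime (the computations \eqref{65}--\eqref{68} in the proof of Lemma~\ref{lemma16}, where $\partial_\lambda m_+=O(\lambda^{-2}\la\xi\ra^{-1})$ is obtained by the same integration by parts); so your proof has the merit of being self-contained and of unifying the small- and large-$\lambda$ treatments, while the paper's choice buys a shorter argument by recycling the $\calH_0$-Green's function machinery already set up for Lemmas~\ref{lemma8}--\ref{lemma10}. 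Two small points you should make explicit: the vanishing of $m_+'$ at infinity (needed when you integrate $(e^{2i\lambda\eta}m_+')'=e^{2i\lambda\eta}Vm_+$ from $\eta$ to $\infty$) follows most quickly by differentiating your Volterra equation in $\xi$, which gives the identity $\partial_\xi m_+(\xi,\lambda)=-\int_\xi^\infty e^{2i\lambda(\eta-\xi)}V(\eta)m_+(\eta,\lambda)\,d\eta$ and hence $|\partial_\xi m_+|\les\xi^{-1}$ directly; and the passage from the integral equation to the equation for $\partial_\lambda m_+$ deserves the standard remark that the differentiated Volterra series converges (or that the differentiated equation has a unique bounded solution), as the paper also tacitly assumes.
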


\begin{proof}
From (\ref{24}), and with $m_0(\xi,\lambda
)=e^{-i\lambda\xi}f_0(\xi,\lambda )$, \beeq\label{42}
m_+(\xi,\lambda )=m_0(\xi,\lambda
)+\int_{\xi}^{\infty}\widetilde{G}_0(\xi,\eta;\lambda )V_1(\eta
)m_+(\eta,\lambda )\, d\eta \eneq where \beeq\label{43}
\widetilde{G}_0(\xi,\eta;\lambda )=\frac{m_0(\xi,\lambda
)\overline{m_0(\eta,\lambda )}-e^{-2i(\xi
-\eta)\lambda}\overline{m_0(\xi,\lambda )}m_0(\eta,\lambda
)}{-2i\lambda} \eneq Now, by asymptotic properties of the Hankel
functions,
$$m_0(\xi,\lambda )=1+O((\xi\lambda )^{-1})$$
where the $O$-term behaves like a symbol.\footnote{In fact,
$m_0(\xi,\lambda )=1+O_\R((\xi\lambda )^{-2})+iO_\R((\xi\lambda
)^{-1})$.} Inserting this bound into (\ref{43}) yields
$$|\widetilde{G}_0(\xi,\eta;\lambda )|\les\eta$$
provided $\eta >\xi >\lambda^{-1}.$ Thus, from (\ref{42}),
$$|m_+(\xi,\lambda )-m_0(\xi,\lambda )|\les\xi^{-1}$$
and thus, for all $\xi\lambda >1$,
$$|m_+(\xi,\lambda )-1|\les (\xi\lambda )^{-1}$$
as claimed.\\
Next, one checks that for $\eta >\xi >\lambda^{-1}$,
$$|\partial_{\lambda}\widetilde{G}_0(\xi,\eta;\lambda )|\les\frac{\eta}{\lambda}.$$
Thus, for all $\lambda\xi >1$,
\begin{align*}
|\partial_{\lambda}m_+(\xi,\lambda )|\les
&\lambda^{-2}\xi^{-1}+\int_{\xi}^{\infty}|\partial_{\lambda}\widetilde{G}_0(\xi,\eta;\lambda
)|\eta^{-3}\, d\eta
 + \int_{\xi}^{\infty}\eta^{-2}|\partial_{\lambda}m_+(\eta\lambda )|\, d\eta\\
\les &
\lambda^{-2}\xi^{-1}+\lambda^{-1}\xi^{-1}+\int_{\xi}^{\infty}\eta^{-2}|\partial_{\lambda}m_+(\eta,\lambda
)|\, d\eta \les  \lambda^{-1}(\lambda\xi )^{-1},
\end{align*}
as claimed.
\end{proof}

\subsection{The Wronksian $W(\lambda)$ for conical
ends, $d=1, n=0$}

In view of our asymptotic analysis of $a_{\pm}$ and $b_\pm$ and an
explicit expression for the Wronskian $W(\lambda)$ in terms of these
functions, see Corollary~\ref{cor1}, we now derive the following
important fact.

\begin{corollary}\label{cor7}
As $\lambda\to0+$,
\begin{align*}
W(\lambda )&=2\lambda \left( 1+ic_3+i\frac{2}{\pi}\log\lambda \right)+O(\lambda^{\frac{3}{2}-\eps})\\
W'(\lambda )&=2\left(
1+ic_3+i\frac{2}{\pi}+i\frac{2}{\pi}\log\lambda \right)
+O(\lambda^{\frac{1}{2}-\eps})
\end{align*}
with $\eps >0$ arbitrary.
\end{corollary}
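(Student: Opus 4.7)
The plan is to compute $W(\lambda)$ by inserting the asymptotic expansions for $a_\pm(\lambda)$ and $b_\pm(\lambda)$ from Lemma~\ref{lemma9} and Remark~\ref{rem:left-right} into the formula
\[
W(\lambda) = a_-(\lambda)\, b_+(\lambda) - a_+(\lambda)\, b_-(\lambda)
\]
of Corollary~\ref{cor1}. Since Remark~\ref{rem:left-right} gives $a_-(\lambda) = a_+(\lambda) + O(\lambda^{1-\eps})$ and $b_-(\lambda) = -b_+(\lambda) + O(\lambda^{1-\eps})$, this reduces to
\[
W(\lambda) = 2\, a_+(\lambda)\, b_+(\lambda) + O(\lambda^{3/2-\eps}),
\]
where the error is sharp: each asymptotic formula in Lemma~\ref{lemma9} carries an $O(\lambda^{1-\eps})$ remainder which, paired against a main term of order $\sqrt\lambda$ from the opposite factor, produces exactly $O(\lambda^{3/2-\eps})$.

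Multiplying out the two expansions gives
\[
2\, a_+(\lambda)\, b_+(\lambda) = 2\,i\, c_0^2\, c_1\, \lambda\, (1 + i c_1 \log\lambda + i c_3) + O(\lambda^{3/2-\eps}),
\]
and a direct numerical computation using $c_0 = \sqrt{\pi/2}\, e^{i\pi/4}$ (so that $c_0^2 = i\pi/2$) together with $c_1 = 2/\pi$ reduces the prefactor $2 i c_0^2 c_1$ to a real constant of modulus $2$, producing the stated expansion $2\lambda(1 + i c_3 + i\frac{2}{\pi}\log\lambda) + O(\lambda^{3/2-\eps})$. The bookkeeping task here is to confirm that the mixed factor $\sqrt\lambda\cdot\sqrt\lambda$ assembles into $\lambda$ with the correct imaginary phase, and to check that the $i c_1 \log\lambda$ summand inside the $a_+$ expansion propagates through the product without being polluted by lower-order cross terms.

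For $W'(\lambda)$ I would differentiate the same Wronskian identity,
\[
W'(\lambda) = a_-'(\lambda)\, b_+(\lambda) + a_-(\lambda)\, b_+'(\lambda) - a_+'(\lambda)\, b_-(\lambda) - a_+(\lambda)\, b_-'(\lambda),
\]
and substitute the asymptotics from Lemma~\ref{lemma9} and Corollary~\ref{cor6}, together with the left-end analogues noted in Remark~\ref{rem:left-right} (so that $a_-'$ matches $a_+'$ and $b_-'$ matches $-b_+'$ up to $O(\lambda^{-\eps})$ errors). The same parity pairing collapses this to
\[
W'(\lambda) = 2\, a_+'(\lambda)\, b_+(\lambda) + 2\, a_+(\lambda)\, b_+'(\lambda) + O(\lambda^{1/2-\eps}),
\]
and substitution yields the claim. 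A useful sanity check is that the formal derivative of the leading term $2\lambda(1 + i c_3 + i c_1\log\lambda)$ of $W$ is exactly $2(1 + i c_3 + i c_1 + i c_1\log\lambda)$, which matches the stated expansion of $W'$ since $i c_1 = i\frac{2}{\pi}$; the extra constant $i\frac{2}{\pi}$ arises from the $2 i c_1$ summand that appears in the asymptotics of $a_+'(\lambda)$ in Corollary~\ref{cor6}.

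The only real obstacle is the algebra of complex constants: verifying that the several occurrences of $c_0, c_1, c_3, \varkappa$ across Lemma~\ref{lemma9}, Corollary~\ref{cor6}, and their left-end versions combine with the correct signs to produce a real prefactor of $2$ and the precise logarithmic term $i\frac{2}{\pi}\log\lambda$. Error control is automatic once this is done, since the bilinear structure of the Wronskian enforces that any single $O(\lambda^{1-\eps})$ or $O(\lambda^{-\eps})$ factor is multiplied by a main term of the opposite type, yielding the claimed error orders.
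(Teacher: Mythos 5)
Your proposal is correct and is essentially the paper's own proof, which obtains the corollary in exactly this way: insert the expansions \eqref{36} and \eqref{39}, together with Remark~\ref{rem:left-right}, into $W(\lambda)=a_-(\lambda)b_+(\lambda)-a_+(\lambda)b_-(\lambda)$ from Corollary~\ref{cor1} and differentiate. One caveat on your ``real constant of modulus $2$'': since $c_0^2=\frac{i\pi}{2}$ and $c_1=\frac{2}{\pi}$, one has $2ic_0^2c_1=-2$, so $2a_+b_+=-2\lambda\left(1+ic_3+i\frac{2}{\pi}\log\lambda\right)+O(\lambda^{\frac32-\eps})$; the $+2\lambda$ in the stated corollary corresponds to the opposite Wronskian ordering $W(f_+(\cdot,\lambda),f_-(\cdot,\lambda))$ that the paper itself uses later (e.g.\ $W(\lambda)=-2i\lambda+O(1)$ in the proof of Lemma~\ref{lemma16}), so this sign wrinkle is inherited from the paper's inconsistent convention rather than being a gap in your argument.
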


\begin{proof}
 Follows immediately from
$$W(\lambda )=(a_-b_+-a_+b_-)(\lambda )$$
and (\ref{36}), (\ref{39}). See Remark~\ref{rem:left-right}.
\end{proof}

\section{The oscillatory integral estimates for $d=1, n=0$}

We now commence with proving the main oscillatory integral
estimate~(\ref{eq:schr_oscill}) and \eqref{eq:wave_oscill} for small
energies. Thus, let $\chi$ be a smooth cut-off function to small
energies, i.e., $\chi (\lambda )=1$ for small $|\lambda |$ and
$\chi$ vanishes outside a small interval around zero.  In addition,
we introduce the smooth cut-off functions $\chi_{[|\xi\lambda |<1]}$
and $\chi_{[|\xi\lambda |>1]}$ which form a partition of unity
adapted to these intervals.

\begin{lemma}\label{lemma12}
For all $t>0$ \begin{align}\label{44} \sup_{\xi,\xi
'}\bigg|\int_0^{\infty}e^{it\lambda^2}\lambda
\frac{\chi(\lambda;\xi,\xi')}{\langle\xi\rangle\langle\xi
'\rangle)^{1/2}}  \mathrm{Im}\left[\frac{f_+(\xi,\lambda )f_-(\xi
',\lambda )}{W(\lambda )}\right]\, d\lambda \bigg|\les \la t\ra^{-1} \\
\label{44'} \sup_{\xi,\xi '}\bigg|\int_0^{\infty}e^{\pm
it\lambda}\lambda\,
\frac{\chi(\lambda;\xi,\xi')}{(\langle\xi\rangle\langle\xi
'\rangle)^{1/2}}\mathrm{Im}\left[\frac{f_+(\xi,\lambda )f_-(\xi
',\lambda )}{W(\lambda )}\right]\, d\lambda \bigg|\les \la t\ra^{-1}
\end{align} where $\chi(\lambda;\xi,\xi'):=\chi (\lambda
)\chi_{[|\xi\lambda |<1,|\xi '\lambda |<1]}$.
\end{lemma}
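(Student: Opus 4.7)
The plan is to substitute the small-energy asymptotic expansions of $f_\pm(\cdot,\lambda)$ (Corollary~\ref{cor5} together with the left-end analog~\eqref{38} and Remark~\ref{rem:left-right}) and of $W(\lambda)$ (Corollary~\ref{cor7}) directly into the integrand, reducing matters to an oscillatory integral estimate which I would handle by a trivial bound for $t\le 1$ and by a single integration by parts for $t\ge 1$.

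Introduce the abbreviations $\alpha_\xi := c_1\log(\lambda\langle\xi\rangle)+c_4$, $\alpha_{\xi'}:=c_1\log(\lambda\langle\xi'\rangle)+c_5$, and $\beta:=c_1\log\lambda+c_3$, recalling $c_1=2/\pi$ and $c_0^2=i\pi/2$ (and noting that for negative $\xi$ one uses \eqref{38}, which flips the sign of $\log\langle\xi\rangle$ inside $\alpha_\xi$ but preserves the structure below). Once the $\sqrt\lambda$ factors from $f_\pm$ cancel the $\lambda$ in $W(\lambda)$, substitution yields
\begin{equation*}
\frac{f_+(\xi,\lambda)\,f_-(\xi',\lambda)}{W(\lambda)} = \frac{i\pi}{4}\sqrt{\langle\xi\rangle\langle\xi'\rangle}\;\frac{(1+i\alpha_\xi)(1+i\alpha_{\xi'})}{1+i\beta} + O\bigl(\sqrt{\langle\xi\rangle\langle\xi'\rangle}\,\lambda^{\frac12-\eps}\bigr)
\end{equation*}
on the support of $\chi(\cdot;\xi,\xi')$. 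The crucial structural point is that $\log\lambda$ appears with the \emph{same} coefficient $c_1$ in $\alpha_\xi$, $\alpha_{\xi'}$, and $\beta$. Setting $p:=\alpha_\xi-\beta=c_1\log\langle\xi\rangle+(c_4-c_3)$ and $p':=c_1\log\langle\xi'\rangle+(c_5-c_3)$ (both $\lambda$-independent), a direct simplification of $\mathrm{Re}[(1+i(\beta+p))(1+i(\beta+p'))/(1+i\beta)]$ gives
\begin{equation*}
G(\lambda;\xi,\xi') := \frac{1}{\sqrt{\langle\xi\rangle\langle\xi'\rangle}}\;\mathrm{Im}\,\frac{f_+f_-}{W}
= \frac{\pi}{4}\Bigl(1-\frac{pp'}{1+\beta^2}\Bigr) + O(\lambda^{\frac12-\eps}).
\end{equation*}

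The heart of the argument is to show that $|G|\lesssim 1$ and $\int_0^{\lambda_*}|\partial_\lambda G|\,d\lambda\lesssim 1$ hold \emph{uniformly} in $\xi,\xi'$, where $\lambda_*:=\min(\lambda_0,|\xi|^{-1},|\xi'|^{-1})$ is the essential upper endpoint of $\chi(\cdot;\xi,\xi')$. The constraint $|\xi\lambda|,|\xi'\lambda|<1$ forces $|p|,|p'|\lesssim 1+|\log\lambda_*|$, whereas $|\beta|\gtrsim|\log\lambda|$ throughout; this immediately gives $|G|\lesssim 1$. For the derivative, since $p,p'$ do not depend on $\lambda$,
\begin{equation*}
|\partial_\lambda G| \lesssim \frac{|pp'||\beta|}{\lambda(1+\beta^2)^2} + O(\lambda^{-\frac12-\eps}),
\end{equation*}
and the change of variable $u=\beta(\lambda)$, $du=c_1\lambda^{-1}d\lambda$, evaluates
\begin{equation*}
\int_0^{\lambda_*}\frac{|\beta|}{\lambda(1+\beta^2)^2}\,d\lambda \lesssim \frac{1}{1+\beta(\lambda_*)^2} \lesssim \frac{1}{1+(\log\lambda_*)^2}.
\end{equation*}
Combined with $|pp'|\lesssim 1+(\log\lambda_*)^2$ this yields the desired uniform bound, the $O(\lambda^{-1/2-\eps})$ term being trivially integrable on $(0,\lambda_*)$.

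The proof concludes by a standard dichotomy. For $0<t\le 1$ the crude estimate $|I|\le\int_0^{\lambda_*}\lambda|G|\,\chi\,d\lambda\lesssim\lambda_*^2\lesssim 1$ suffices. For $t\ge 1$ one integrates by parts. In the Schr\"odinger case, using $\lambda e^{it\lambda^2}d\lambda=(2it)^{-1}d(e^{it\lambda^2})$, one obtains a boundary contribution $-G(0)\chi(0;\xi,\xi')/(2it)$ of size $O(1/t)$ plus the interior term $-(2it)^{-1}\int e^{it\lambda^2}\partial_\lambda(G\chi)\,d\lambda$, also $O(1/t)$ by the previous paragraph (noting that $\|\partial_\lambda\chi(\cdot;\xi,\xi')\|_{L^1}\lesssim 1$ uniformly). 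In the wave case, using $e^{\pm it\lambda}d\lambda=\pm(it)^{-1}d(e^{\pm it\lambda})$, one obtains $\mp(it)^{-1}\int e^{\pm it\lambda}\partial_\lambda(\lambda G\chi)\,d\lambda$, bounded by $t^{-1}\bigl(\|G\chi\|_{L^1}+\|\lambda\,\partial_\lambda(G\chi)\|_{L^1}\bigr)\lesssim t^{-1}$ via $\|G\chi\|_{L^1}\lesssim\lambda_*\lesssim 1$. The main obstacle throughout is the uniform $L^1$-control on $\partial_\lambda G$: the potential logarithmic growth of $pp'$ as $|\xi|,|\xi'|\to\infty$ is offset exactly by the shrinking of the integration interval through the cutoff $|\xi\lambda|,|\xi'\lambda|<1$, a cancellation tied to the logarithmic resonance specific to $d=1,\,n=0$.
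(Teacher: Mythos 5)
Your identification of the main term is correct and captures exactly the cancellation the paper exploits: after the $\sqrt{\lambda}$ factors cancel against $W(\lambda)\approx 2\lambda(1+i\beta)$, the real part $1-\frac{pp'}{1+\beta^2}$ with $\lambda$-independent $p,p'$ is precisely (up to inessential constants/signs) the paper's splitting into a constant $\times\, u_0(\xi,\lambda)u_0(\xi',\lambda)$ piece and a $\big(1+(c_3+c_1\log\lambda)^2\big)^{-1}\times u_1(\xi,\lambda)u_1(\xi',\lambda)$ piece, and your change of variable $u=\beta(\lambda)$ together with $|pp'|\les 1+\beta(\lambda_*)^2$ is the right mechanism. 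The genuine gap is in the error bookkeeping. You use Corollary~\ref{cor5} as a uniform, \emph{differentiable} expansion on the whole support of $\chi_{[|\xi\lambda|<1,\,|\xi'\lambda|<1]}$, and neither property is available from that statement. First, Corollary~\ref{cor5} contains no derivative information, so the asserted pointwise bound $|\partial_\lambda G|\les \frac{|pp'|\,|\beta|}{\lambda(1+\beta^2)^2}+O(\lambda^{-\frac12-\eps})$ is unsupported; worse, it is not uniformly true. The expansions behind Corollary~\ref{cor5} replace $u_j(\xi,\lambda)$ by $u_j(\xi)\big(1+O((\xi\lambda)^2)\big)$, and by \eqref{34} the $\lambda$-derivative of that relative error is of size $\lambda\la\xi\ra^{2}$, which near the endpoint $\lambda\sim\la\xi\ra^{-1}$ is $\sim\la\xi\ra\gg\lambda^{-\frac12-\eps}$; these contributions are harmless only because $\int_0^{c\la\xi\ra^{-1}}\lambda\la\xi\ra^2\,d\lambda\les1$, i.e.\ after integration over the short support, not through the pointwise bound you state. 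Second, even at the level of size, Corollary~\ref{cor5}'s error $O(\lambda^{\frac12-\eps})+O(\la\xi\ra^{-1}\log\la\xi\ra)$ does not account for the $\lambda$-dependent discrepancies coming from the prefactors $2^{1/4}u_0(\xi)/\sqrt{\la\xi\ra}=1+O(\la\xi\ra^{-1})$ multiplying $ic_1\log\lambda$ (on the support one only knows $\lambda<\la\xi\ra^{-1}$, and for $\lambda\ll e^{-\la\xi\ra}$ the term $\la\xi\ra^{-1}|\log\lambda|$ exceeds the stated error; the corollary's derivation via Corollary~\ref{cor4} and Lemma~\ref{lemma8} also does not cover bounded $\xi$ in this form). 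These discrepancies are in fact $\lambda$-independent multiplicative corrections and do not destroy your structure, but your argument never establishes that.

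The paper avoids all of this by never expanding $f_\pm$ pointwise: it writes $f_\pm=a_\pm u_0(\cdot,\lambda)+b_\pm u_1(\cdot,\lambda)$ (Corollary~\ref{cor1}) with \emph{real-valued} $u_j(\cdot,\lambda)$, so that in the symmetric case the cross terms drop out of the imaginary part and
\begin{equation*}
\Im\Big[\frac{f_+(\xi,\lambda)f_-(\xi',\lambda)}{W(\lambda)}\Big]
= C_0(\lambda)\,u_0(\xi,\lambda)u_0(\xi',\lambda)+C_1(\lambda)\,u_1(\xi,\lambda)u_1(\xi',\lambda),
\end{equation*}
with $C_0=\const+O_\R(\lambda^{\frac12-\eps})$ and $C_1\sim\big(1+(c_3+c_1\log\lambda)^2\big)^{-1}$, both differentiable by Corollary~\ref{cor6}; the $\xi$-dependence then enters only through $u_j(\xi,\lambda)$, whose $\lambda$-derivatives are controlled by Corollary~\ref{cor4} and integrated over the support (this is where the factors $\lambda(\la\xi\ra^2+\la\xi'\ra^2)$ are absorbed). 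To repair your write-up, either switch to that exact decomposition (adding the $u_0u_1$ cross term with $O_\R(\lambda^{\frac12-\eps})$ coefficient in the nonsymmetric case), or keep your formulation but carry the multiplicative $1+O(\la\xi\ra^{-1})$ corrections explicitly and justify every differentiated error with Corollary~\ref{cor4}, Corollary~\ref{cor6} and Lemma~\ref{lemma10}, replacing the pointwise $O(\lambda^{-\frac12-\eps})$ claim by integrated bounds over $\{\lambda\les\lambda_*\}$. The final dichotomy (trivial bound for $t\le1$, one integration by parts for $t\ge1$, boundary term $O(t^{-1})$) is fine and matches the paper.
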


\begin{proof} We shall first assume for simplicity that $\calM$ is
symmetric, i.e., $r(x)=r(-x)$. The general case will be discussed at the end of the proof.  We first observe the following:
\begin{align*}
\text{Im}&\left[\frac{f_+(\xi,\lambda )f_-(\xi ',\lambda )}{W(\lambda )}\right]\\
&=\text{Im}\left[\frac{(a_+(\lambda )u_0(\xi,\lambda )+b_+(\lambda )u_1(\xi,\lambda ))(a_+(\lambda )u_0(\xi ',\lambda )-b_+(\lambda )u_1(\xi ',\lambda ))}{-2a_+b_+(\lambda )}\right]\\
& = -\frac{1}{2}\text{Im}\left(\frac{a_+}{b_+}(\lambda
)\right)u_0(\xi,\lambda )u_0(\xi ',\lambda
)+\frac{1}{2}\text{Im}\left(\frac{b_+}{a_+}(\lambda
)\right)u_1(\xi,\lambda )u_1(\xi',\lambda).
\end{align*}
Further, by (\ref{36}), with $\eps>0$ arbitrary but fixed,
\begin{align*}
-\frac{1}{2}\text{Im}\left(\frac{a_+}{b_+}(\lambda )\right)&
=\frac{\pi}{2\sqrt{2}\,}\text{Re}\left[\frac{1+ic_1\log\lambda
+ic_3+O(\lambda^{\frac{1}{2}-\eps})}{1+O(\lambda^{\frac{1}{2}-\eps})}\right]\\
&=O_\R(\lambda^{\frac{1}{2}-\eps})+\frac{\pi}{2\sqrt{2}\,}
\end{align*}
and by Corollary~\ref{cor6}, the $O$-term can be formally
differentiated, i.e.,
$$\frac{d}{d\lambda}\left\{-\frac{1}{2}\text{Im}\left(\frac{a_+}{b_+}(\lambda )\right)\right\}=O_\R(\lambda^{-\frac{1}{2}-\eps}).$$
Similarly,
$$\frac{1}{2}\text{Im}\left(\frac{b_+}{a_+}(\lambda )\right)=
-\frac{\sqrt{2}\,}{\pi}\,\frac{1}{1+(c_3+c_1\log\lambda)^2}+O_\R(\lambda^{\frac{1}{2}-\eps})$$
which can again be formally differentiated.\\
By the estimates of Corollary~\ref{cor4}, provided $|\xi\lambda
|+|\xi '\lambda |<1$,
\begin{align*}
|u_0(\xi,\lambda )u_0(\xi ',\lambda
)|&\les\sqrt{\langle\xi\rangle\langle\xi '\rangle}\\
|\partial_{\lambda}[u_0(\xi,\lambda )u_0(\xi ',\lambda )]|&\les \lambda
\left(\langle\xi\rangle^{5/2}\langle\xi '\rangle^{1/2}+\langle\xi '\rangle^{5/2}\langle\xi\rangle^{1/2}\right)\\
&\les \lambda\sqrt{\langle\xi\rangle\langle\xi
'\rangle}(\langle\xi\rangle^2+\langle\xi '\rangle^2).
\end{align*}
Similarly,
\begin{align*}   |u_1(\xi,\lambda )u_1(\xi ',\lambda
)|&\les\sqrt{\langle\xi\rangle\langle\xi '\rangle}\log(2+\la\xi\ra) \log(2+\la\xi'\ra)  \\
|\partial_{\lambda}[u_1(\xi,\lambda )u_1(\xi ',\lambda )]|
&\les\lambda\sqrt{\langle\xi\rangle\langle\xi
'\rangle}(\langle\xi\rangle^2+\langle\xi
'\rangle^2)\log(2+\la\xi\ra) \log(2+\la\xi'\ra)
\end{align*}
Passing absolute values inside \eqref{44} and~\eqref{44'} shows that
these expressions are dominated by
\begin{equation}\label{eq:0block}\begin{aligned} & \int_0^{\infty}\bigg|
\chi (\xi,\xi';\lambda ) (\langle\xi\rangle\langle\xi '\rangle
)^{-1/2}\text{Im}\left(\frac{a_+}{b_+}(\lambda
)\right)u_0(\xi,\lambda )
u_0(\xi ',\lambda )\bigg|\, d\lambda \\
&\quad + \int_0^{\infty}\bigg|\chi (\xi,\xi';\lambda)
(\langle\xi\rangle\langle\xi '\rangle
)^{-1/2}\text{Im}\left(\frac{b_+}{a_+}(\lambda )\right)
u_1(\xi,\lambda )u_1(\xi ',\lambda )\bigg|\, d\lambda
\end{aligned}
\end{equation}
which is bounded by an absolute constant. To obtain decay in $t$, we
integrate by parts.  Integrating by parts in (\ref{44}) yields that
it  is dominated by
\begin{equation}\label{eq:tblock}\begin{aligned} & t^{-1}\int_0^{\infty}\bigg|\partial_{\lambda}
\Big[\chi (\xi,\xi';\lambda ) (\langle\xi\rangle\langle\xi '\rangle
)^{-1/2}\text{Im}\left(\frac{a_+}{b_+}(\lambda
)\right)u_0(\xi,\lambda )
u_0(\xi ',\lambda )\Big]\bigg|\, d\lambda \\
&\quad + t^{-1}\int_0^{\infty}\bigg|\partial_{\lambda} \Big[\chi
(\xi,\xi';\lambda) (\langle\xi\rangle\langle\xi '\rangle
)^{-1/2}\text{Im}\left(\frac{b_+}{a_+}(\lambda )\right)
u_1(\xi,\lambda )u_1(\xi ',\lambda )\Big]\bigg|\, d\lambda
\end{aligned}
\end{equation}
Using the bounds we derived above these expressions can be seen to
be $\les t^{-1}$ and \eqref{44} holds. For~\eqref{44'} we write
$e^{it\lambda}=(it)^{-1}\partial_\lambda e^{it\lambda}$ and
integrate by parts; this yields that the left-hand side
of~\eqref{44'} is dominated by  the exact same terms as
in~\eqref{eq:tblock} (in fact, with an extra $\lambda$).

\noindent If $\calM$ is not symmetric, then the asymptotics of the previous section allow for the
following conclusion (in very much the same way as in the symmetric case):
\begin{align*}
 \Im \left[\frac{f_+(\xi,\lambda )f_-(\xi ',\lambda )}{W(\lambda )}\right]
& = \big(\gamma_0 + O_\R(\lambda^{\frac12-\eps})\big) u_0(\xi,\lambda) u_0(\xi',\lambda) \\
&\quad +
\big(\frac{\gamma_1}{1+(c_3+c_1\log\lambda)^2}
+ O_\R(\lambda^{\frac12-\eps})\big) u_1(\xi,\lambda) u_1(\xi',\lambda)\\
& \quad +O_\R(\lambda^{\frac12-\eps}) (u_0(\xi,\lambda) u_1(\xi',\lambda) + u_1(\xi,\lambda) u_0(\xi',\lambda))
\end{align*}
where $\gamma_0, \gamma_1$ are nonzero real constants (in fact, the same as in the symmetric case). With
this representation in hand, the oscillatory integrals are estimated exactly as in the symmetric case.
\end{proof}

Next, we consider the case $|\xi\lambda |>1$ and $|\xi '\lambda
|>1$.  With the convention that $f_{\pm}(\xi,-\lambda
)=\overline{f_{\pm}(\xi,\lambda )}$ we can remove the imaginary part
in (\ref{eq:schr_oscill}) and integrate $\lambda$ over the whole
axis.  We shall follow this convention hence forth. To estimate the
oscillatory integrals, we shall repeatedly use the following version
of stationary
phase, see Lemma~2 in \cite{Sch} for the proof.

\begin{lemma}\label{lemma12'}
Let $\phi (0)=\phi '(0)=0$ and $1\leq\phi ''\leq C$.  Then
\beeq\label{46} \bigg|\int_{-\infty}^{\infty}e^{it\phi (x)}a(x)\,
dx\bigg|\les\delta^2\left\{\int\frac{|a(x)|}{\delta^2+|x|^2}\,
dx+\int_{|x|>\delta}\frac{|a'(x)|}{|x|}\, dx\right\} \eneq
where $\delta =t^{-1/2}$.
\end{lemma}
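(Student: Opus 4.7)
The plan is to prove Lemma~\ref{lemma12'} by combining a trivial estimate on the stationary region with one integration by parts on the non-stationary region, carefully bookkeeping the boundary terms that appear.

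First I would split the integral as $\int = \int_{|x|<\delta} + \int_{|x|\ge\delta}$. On the first piece the assumption $\phi''\ge 1$ together with $\phi(0)=\phi'(0)=0$ is not needed: I just bound by absolute values, noting that on $|x|<\delta$ one has $\delta^2/(\delta^2+x^2)\gtrsim 1$, so
\[
\Bigl|\int_{|x|<\delta} e^{it\phi(x)}a(x)\,dx\Bigr|\le\int_{|x|<\delta}|a(x)|\,dx\les \delta^2\int\frac{|a(x)|}{\delta^2+x^2}\,dx.
\]
This accounts for the first term on the right-hand side of~\eqref{46}.

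For $|x|\ge\delta$ I use the key fact that $\phi''\ge 1$ and $\phi'(0)=0$ force $|\phi'(x)|\gtrsim|x|$ and $|\phi''|\le C$, which makes $1/\phi'$ a legitimate integrating factor. I write $e^{it\phi}=\frac{1}{it\phi'(x)}\partial_x e^{it\phi}$ and integrate by parts on $[\delta,\infty)$ and $(-\infty,-\delta]$ separately, obtaining boundary terms at $\pm\delta$ (the contributions at $\pm\infty$ vanish by hypothesis or by a standard density argument). The interior part, after distributing $\partial_x$, produces two terms, bounded by
\[
\frac{1}{t}\int_{|x|>\delta}\frac{|a'(x)|}{|\phi'(x)|}\,dx + \frac{1}{t}\int_{|x|>\delta}\frac{|\phi''(x)||a(x)|}{\phi'(x)^2}\,dx
\les \delta^2\int_{|x|>\delta}\frac{|a'(x)|}{|x|}\,dx+\delta^2\int_{|x|>\delta}\frac{|a(x)|}{x^2}\,dx,
\]
using $t=\delta^{-2}$, $|\phi'|\gtrsim|x|$, $|\phi''|\les 1$. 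The second integral is absorbed into $\delta^2\int\frac{|a(x)|}{\delta^2+x^2}\,dx$ since $\delta^2+x^2\asymp x^2$ on $|x|>\delta$.

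The one slightly delicate step, which I expect to be the main obstacle, is the boundary contribution $\delta^2\cdot|a(\pm\delta)|/|\phi'(\pm\delta)|\asymp \delta|a(\pm\delta)|$: it has no derivative of $a$ in it and no integral, so I cannot directly read it as one of the two terms in the statement. I would absorb it by a standard averaging argument via the fundamental theorem of calculus: for $x\in[\delta,2\delta]$,
\[
|a(\delta)|\le|a(x)|+\int_\delta^x|a'(y)|\,dy,
\]
and averaging over $x\in[\delta,2\delta]$ gives
\[
\delta|a(\delta)|\les \int_\delta^{2\delta}|a(x)|\,dx+\delta\int_\delta^{2\delta}|a'(y)|\,dy\les \delta^2\int_\delta^{2\delta}\frac{|a(x)|}{x^2}\,dx+\delta^2\int_\delta^{2\delta}\frac{|a'(y)|}{y}\,dy,
\]
where in the last step I used $x,y\asymp\delta$ on the interval of integration. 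The contribution from $-\delta$ is handled symmetrically. Summing these pieces produces exactly the right-hand side of~\eqref{46}.
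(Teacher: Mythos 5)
Your proposal is correct, and it is essentially the standard argument: the paper does not prove this lemma itself but refers to Lemma~2 of \cite{Sch}, whose proof proceeds exactly along your lines --- split at $|x|\sim\delta=t^{-1/2}$, bound the stationary region trivially by $\delta^2/(\delta^2+x^2)\gtrsim 1$, integrate by parts once off it using $|\phi'(x)|\ge |x|$ and $|\phi''|\le C$, and absorb the boundary terms at $\pm\delta$ by the fundamental-theorem/averaging device you describe. The only step worth making explicit is the vanishing of the boundary contribution at $\pm\infty$: when the right-hand side of \eqref{46} is finite this follows from $|a(R)|/R\le |a(R_0)|/R+\int_{R_0}^{\infty}|a'(y)|\,y^{-1}dy$ with $R_0\to\infty$ after $R\to\infty$ (and in the paper's applications $a$ is compactly supported in $\lambda$ anyway, so the issue does not arise).
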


\noindent Using Lemma~\ref{lemma12'} we can prove the following:
\begin{lemma}\label{lemma13} With $\chi(\lambda;\xi,\xi')=\chi (\lambda
)\chi_{[|\xi\lambda |>1,|\xi '\lambda |>1]}$,
\begin{align}\label{45} \sup_{\xi >0>\xi
'}\bigg|\int_{-\infty}^{\infty}e^{it\lambda^2}\lambda\chi(\lambda;\xi,\xi')(\langle\xi\rangle\langle\xi
'\rangle)^{-1/2}\frac{f_+(\xi,\lambda )f_-(\xi ',\lambda
)}{W(\lambda )}\, d\lambda \bigg|\les t^{-1} \\
\label{45'} \sup_{\xi >0>\xi '}\bigg|\int_{-\infty}^{\infty}e^{\pm
it\lambda}\lambda\chi(\lambda;\xi,\xi')(\langle\xi\rangle\langle\xi
'\rangle)^{-1/2}\frac{f_+(\xi,\lambda )f_-(\xi ',\lambda
)}{W(\lambda )}\, d\lambda \bigg|\les t^{-\frac12}
\end{align}
for all $t>0$.
\end{lemma}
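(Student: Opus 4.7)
The plan is to reduce the Jost solutions to an oscillating factor times a slowly varying amplitude throughout the region $|\xi\lambda|>1$, $|\xi'\lambda|>1$. Invoking Lemma~\ref{lemma11} and its left analogue (via Remark~\ref{rem:left-right}), and extending to $\lambda<0$ by $f_\pm(\cdot,-\lambda)=\overline{f_\pm(\cdot,\lambda)}$, I set $m_+(\xi,\lambda):=e^{-i\lambda\xi}f_+(\xi,\lambda)$ and $m_-(\xi',\lambda):=e^{i\lambda\xi'}f_-(\xi',\lambda)$, so that
\[
\frac{f_+(\xi,\lambda)f_-(\xi',\lambda)}{W(\lambda)} = e^{i\lambda(\xi-\xi')}\,\frac{m_+(\xi,\lambda)m_-(\xi',\lambda)}{W(\lambda)},
\]
with $|m_\pm-1|\les(|\xi|\,|\lambda|)^{-1}$ and $|\partial_\lambda m_\pm|\les|\lambda|^{-2}|\xi|^{-1}$. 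Setting $y:=\xi-\xi'>0$ and
\[
A(\lambda):=\chi(\lambda;\xi,\xi')\,\lambda\,\frac{m_+(\xi,\lambda)m_-(\xi',\lambda)}{W(\lambda)},
\]
both oscillatory integrals become, up to the external weight $(\la\xi\ra\la\xi'\ra)^{-1/2}$, integrals of $e^{i\Phi(\lambda)}A(\lambda)$. Corollary~\ref{cor7} gives $|W(\lambda)|\sim|\lambda\log\lambda|$ near zero, so $|A(\lambda)|\les|\log\lambda|^{-1}$ on $\supp\chi$.

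For the Schr\"odinger integral~\eqref{45}, the phase $\Phi(\lambda)=t\lambda^2+y\lambda$ has a single non-degenerate critical point at $\lambda_0=-y/(2t)$. After centering $\mu=\lambda-\lambda_0$ and rescaling by $\sqrt t$, the phase becomes $\mu^2$, meeting the hypotheses of Lemma~\ref{lemma12'} with $\delta=t^{-1/2}$. That lemma then yields $|J|\les t^{-1}\bigl(\|A\|_\infty+\|A'\|_{L^1}\bigr)\les t^{-1}$, and the external weight $(\la\xi\ra\la\xi'\ra)^{-1/2}\le 1$ does not spoil this.

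For the wave integral~\eqref{45'}, the phase $\Phi(\lambda)=\tilde t\,\lambda$ with $\tilde t:=\pm t+y$ is purely linear, so I dichotomize on $|\tilde t|$. If $|\tilde t|\gtr t$, one integration by parts contributes $|\tilde t|^{-1}\les t^{-1}$, giving $|J|\les t^{-1}\|A'\|_{L^1}\les t^{-1}\le t^{-1/2}$. If $|\tilde t|\ll t$, then necessarily $y\sim t$ (and the sign is $-$), hence $\max(\xi,|\xi'|)\ge y/2\gtr t$, so that $(\la\xi\ra\la\xi'\ra)^{-1/2}\les t^{-1/2}$; the crude bound $|J|\les\|A\|_{L^1}\les 1$ then suffices. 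The small-$t$ regime is trivial since $|J|\les 1$ beats $t^{-1/2}$.

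The main technical obstacle will be the uniform bound $\|A'\|_{L^1}\les 1$ on which both arguments rely. A naive estimate of $\partial_\lambda(\lambda/W)=(W-\lambda W')/W^2$ would give size $|\log\lambda|/(\lambda\log\lambda)^2=(\lambda|\log\lambda|)^{-1}$, which fails to be integrable near $\lambda=0$. Corollary~\ref{cor7} however produces the decisive cancellation $W(\lambda)-\lambda W'(\lambda)=-2ic_1\lambda+O(\lambda^{3/2-\eps})$: the $\log\lambda$ coefficients in $W$ and in $\lambda W'$ agree and drop out exactly. This improves the bound to $|\partial_\lambda(\lambda/W)|\les(\lambda\log^2\lambda)^{-1}$, which is $L^1$ down to the cutoff at $|\lambda|\sim1/\min(\xi,|\xi'|)$. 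Combining with $|\partial_\lambda m_\pm|\les\lambda^{-2}|\xi|^{-1}$ (balanced against the $\lambda\xi\ge 1$ constraint), and observing that the derivatives of the smooth cutoffs $\chi_{[|\xi\lambda|>1]}$ live on a $\lambda$-shell of width $|\xi|^{-1}$ and contribute $L^1$ mass $O(1)$, I expect $\|A'\|_{L^1}\les 1$ up to at most a $\log$ factor that is absorbed for $t\ge1$.
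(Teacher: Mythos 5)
Your reduction to the amplitude $a(\lambda)=\chi(\lambda;\xi,\xi')\,\lambda\,m_+(\xi,\lambda)m_-(\xi',\lambda)/W(\lambda)$, the use of the cancellation $|\partial_\lambda(\lambda/W(\lambda))|\les |\lambda|^{-1}|\log\lambda|^{-2}$ from Corollary~\ref{cor7}, the bounds on $\partial_\lambda m_\pm$ from Lemma~\ref{lemma11}, and your treatment of the wave integral \eqref{45'} (one integration by parts when $|t\pm(\xi-\xi')|\gtr t$, and the observation that otherwise $\xi-\xi'\sim t$ forces $(\la\xi\ra\la\xi'\ra)^{-1/2}\les t^{-1/2}$) all coincide with the paper's argument. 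The genuine gap is in the Schr\"odinger estimate \eqref{45}: Lemma~\ref{lemma12'} does \emph{not} yield $|J|\les t^{-1}(\|A\|_\infty+\|A'\|_{L^1})$. With $\delta=t^{-1/2}$, the right-hand side of \eqref{46} controlled only by $\|A\|_\infty$ and $\|A'\|_{L^1}$ gives $\delta^2\cdot\frac{\pi}{\delta}\|A\|_\infty+\delta^2\cdot\frac1\delta\|A'\|_{L^1}\sim t^{-1/2}$, not $t^{-1}$; and no sharper general inequality of the form you claim can hold, since a fixed unit bump with the critical point $\lambda_0=-(\xi-\xi')/(2t)$ in the interior of its support produces a true $t^{-1/2}$ stationary-phase contribution. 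Thus your conclusion for \eqref{45} rests on a false intermediate inequality, and your remark that the weight $(\la\xi\ra\la\xi'\ra)^{-1/2}\le 1$ ``does not spoil'' the bound has the logic backwards: that weight, together with the support restriction $|\xi\lambda|>1$, $|\xi'\lambda|>1$, is exactly what rescues the estimate.

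What is missing is the verification that the two $\lambda_0$-centered weighted integrals in \eqref{46}, namely $\int |a(\lambda)|\,(|\lambda-\lambda_0|^2+t^{-1})^{-1}d\lambda$ and $\int_{|\lambda-\lambda_0|>\delta}|a'(\lambda)|\,|\lambda-\lambda_0|^{-1}d\lambda$, are bounded uniformly in $\xi>0>\xi'$. This requires a case analysis on where $\lambda_0$ sits relative to the support: if $|\lambda_0|\gtr \xi^{-1}+|\xi'|^{-1}$ and $|\lambda_0|\les1$, then $t\les \xi|\xi'|$, so the potentially dangerous term $(\la\xi\ra\la\xi'\ra)^{-1/2}t^{1/2}$ coming from $\int(|\lambda-\lambda_0|^2+t^{-1})^{-1}d\lambda\les t^{1/2}$ is $\les1$; if $|\lambda_0|\ll \xi^{-1}+|\xi'|^{-1}$ then $|\lambda-\lambda_0|\sim|\lambda|$ on $\supp a$ and one integrates $\lambda^{-2}$ down to $\xi^{-1}+|\xi'|^{-1}$, producing $\sqrt{\xi|\xi'|}/(\xi+|\xi'|)\les1$; and if $|\lambda_0|\gg1$ then $|\lambda-\lambda_0|\sim|\lambda_0|$ on the support. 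This is precisely the three-case argument in the paper's proof, and without it (or an equivalent exploitation of the weight and the cutoffs against the location of $\lambda_0$) the $t^{-1}$ rate in \eqref{45} does not follow. Your $\|A'\|_{L^1}\les1$ computation is correct and useful, but by itself it only delivers $t^{-1/2}$ for the Schr\"odinger integral.
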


\begin{proof}
Writing $f_+(\xi,\lambda )=e^{i\xi\lambda}m_+(\xi,\lambda )$,
$f_-(\xi,\lambda )=e^{-i\xi\lambda}m_-(\xi,\lambda )$ as in
Lemma~\ref{lemma11}, we express (\ref{45}) in the form
\beeq\label{47} \bigg|\int_{-\infty}^{\infty}e^{it\phi (\lambda
)}a(\lambda )\, d\lambda \bigg|\les t^{-1} \eneq where $\xi > 0>\xi
'$ are fixed, $\phi (\lambda ):=\lambda^2+\frac{\lambda}{t}(\xi -\xi
')$, and
$$a(\lambda )=\lambda\chi (\lambda )\chi_{[|\xi\lambda |>1,|\xi '\lambda |>1]}
(\langle\xi\rangle\langle\xi '\rangle )^{-1/2}\frac{m_+(\xi,\lambda )m_-(\xi ',\lambda )}{W(\lambda )}.$$
Let $\lambda_0=-\frac{\xi -\xi '}{2t}$. We have the bounds
\beeq\label{48} |a(\lambda )|\les(\langle\xi\rangle\langle\xi
'\rangle )^{\frac{-1}{2}}\chi (\lambda )\chi_{[|\xi\lambda |>1,|\xi
'\lambda |>1]}. \eneq By Corollary~\ref{cor7}, for small $|\lambda
|$
$$\bigg|\left(\frac{\lambda}{W(\lambda )}\right)'\bigg|\les\frac{1}{|\lambda |(\log |\lambda |)^2}$$
and by Lemma~\ref{lemma11}, for $|\xi\lambda |>1$, $|\xi '\lambda
|>1$,
$$|\partial_{\lambda}[m_+(\xi,\lambda )m_-(\xi ',\lambda )]|\les\lambda^{-2}(\xi^{-1}+|\xi '|^{-1}).$$
Hence, \beeq\label{49} |a'(\lambda
)|\les(\langle\xi\rangle\langle\xi '\rangle )^{-1/2}\chi (\lambda
)\chi_{[|\xi\lambda |>1,|\xi '\lambda |>1]}\left\{\frac{|\lambda
|^{-1}}{|\log\lambda |^2}+\lambda^{-2}(\xi^{-1}+|\xi
'|^{-1})\right\}. \eneq We will need to consider three cases in
order to prove (\ref{47}) via (\ref{46}), depending on where
$\lambda_0$ falls relative to the support of $a$.

\smallskip
\underline{Case 1:} $|\lambda_0 |\les 1$, $|\lambda_0|\gtr |\xi
|^{-1}+|\xi '|^{-1}$.

\smallskip
Note that the second inequality here implies that
$$\frac{\xi +|\xi '|}{t}\gtr\frac{\xi +|\xi '|}{\xi |\xi '|} \text{\ \ or\ \ }1\gtr\frac{t}{\xi |\xi '|}.$$
Furthermore, we remark that $a\equiv 0$ unless $\xi\gtr 1$ and $|\xi '|\gtr 1$.\\
Starting with the first integral on the right-hand side of
(\ref{46}) we conclude from (\ref{48}) that
$$\int\frac{|a(\lambda )|}{|\lambda -\lambda_0|^2+\delta^2}\,d\lambda\les(\langle\xi\rangle\langle\xi '\rangle )^{-1/2}t^{1/2}\les 1.$$
From the second integral we obtain from (\ref{49}) that
\begin{align*}
 \int\limits_{|\lambda -\lambda_0|>\delta}\frac{|a'(\lambda
)|}{|\lambda -\lambda_0|}\, d\lambda &\les
(\langle\xi\rangle\langle\xi '\rangle )^{-1/2}\delta^{-1}\int\frac{\chi (\lambda )\, d\lambda}{|\lambda |(\log |\lambda |)^2}\\
&\quad+(\langle\xi\rangle\langle\xi '\rangle
)^{-1/2}(\langle\xi\rangle^{-1}
+\langle\xi '\rangle^{-1})\;\delta^{-1}\!\!\!\!\!\!\!\int\limits_{\lambda >\xi^{-1}+|\xi '|^{-1}}\frac{d\lambda}{\lambda^2}\\
&\les\sqrt{\frac{t}{\langle\xi\rangle\langle\xi '\rangle}} \les 1.
\end{align*}

\smallskip
\underline{Case 2:} $|\lambda_0 |\les 1$, $|\lambda_0|\ll
\langle\xi\rangle^{-1}+\langle\xi '\rangle^{-1}$.

\smallskip
Then $|\lambda -\lambda_0|\sim |\lambda |$ on the support of $a$,
which implies that
\begin{align*}
\int\frac{|a(\lambda )|}{|\lambda -\lambda_0|^2+t^{-1}}\, d\lambda \les
 (\langle\xi\rangle\langle\xi '\rangle )^{-1/2}\int\limits_{\lambda >\xi^{-1}+|\xi '|^{-1}}\frac{d\lambda}{\lambda^2}
&\les\frac{\sqrt{\xi |\xi '|}}{\xi +|\xi '|} \les 1,
\end{align*}
and also
\begin{align*}
&\int\limits_{|\lambda -\lambda_0|>\delta}\frac{|a'(\lambda
)|}{|\lambda
-\lambda_0|}\, d\lambda \\
&\les(\langle\xi\rangle\langle\xi '\rangle
)^{-1/2}\Big(\int\limits_{\lambda
>\xi^{-1}+|\xi '|^{-1}} \frac{d\lambda}{\lambda^2(\log |\lambda
|)^2} +\int\limits_{\lambda
>\xi^{-1}+|\xi '|^{-1}}\frac{d\lambda}{\lambda^3}(\xi^{-1}+|\xi
'|^{-1})\Big) \\
&\les\frac{\sqrt{\xi |\xi '|}}{\xi +|\xi '|} \les 1.
\end{align*}

\smallskip
\underline{Case 3:} $|\lambda_0 |>> 1$,
$|\lambda_0|\gtr\xi^{-1}+|\xi ' |^{-1}$.

\smallskip
In this case, $|\lambda - \lambda_0 |\sim |\lambda_0|>>1$. Thus,
$$\int\frac{|a(\lambda )|}{|\lambda -\lambda_0|^2+t^{-1}}\, d\lambda
\les (\langle\xi\rangle\langle\xi '\rangle )^{-1/2}\frac{1}{\lambda_0^2+t^{-1}}\les 1$$
as well as, see (\ref{49}),
\begin{align*}
\int\limits_{|\lambda -\lambda_0|>\delta}\frac{|a'(\lambda
)|}{|\lambda -\lambda_0|}\; d\lambda &\les
(\langle\xi\rangle\langle\xi '\rangle )^{-1/2}\lambda_0^{-1}
\int\frac{\chi (\lambda )}{|\lambda |(\log |\lambda |)^2}\;d\lambda
\\
& +\int\frac{1}{\lambda^2}\chi_{[|\lambda |>\xi^{-1}+|\xi '|^{-1}]}
\frac{d\lambda}{\lambda_0}\:\frac{\xi +|\xi '|}{(\xi |\xi '|)^{3/2}}
\les 1,
\end{align*}
and \eqref{45} is proved.

\noindent Integrating by parts shows that \eqref{45'} is dominated
by
\[
(1+|t\pm(\xi-\xi')|)^{-1} \int (|a(\lambda)|+
|a'(\lambda)|)\,d\lambda \les (\la\xi\ra\la\xi'\ra)^{-\frac12}
(1+|t\pm(\xi-\xi')|)^{-1}
\] which is $\les t^{-\frac12}$
and the lemma is proved.
\end{proof}

Now we turn to the estimate of the oscillatory integral for the case
$|\xi\lambda|
>1$ and $|\xi '\lambda |<1$.

\begin{lemma}\label{lemma14} Let $\chi(\lambda;\xi,\xi')=\chi_{[|\xi\lambda| >1,|\xi '\lambda
|<1]}\chi(\lambda)$. Then \begin{align}\label{50} \sup_{\xi
>0>\xi '}\bigg|(\langle\xi\rangle\langle\xi '\rangle
)^{-1/2}\int_{-\infty}^{\infty}e^{it\lambda^2}\frac{\lambda\chi
(\lambda;\xi,\xi' )}{W(\lambda )}f_+(\xi,\lambda )f_-(\xi ',\lambda
)\,
d\lambda \bigg|&\les t^{-1} \\
\label{50'} \sup_{\xi
>0>\xi '}\bigg|(\langle\xi\rangle\langle\xi '\rangle
)^{-1/2}\int_{-\infty}^{\infty}e^{\pm it\lambda}\frac{\lambda\chi
(\lambda;\xi,\xi' )}{W(\lambda )}f_+(\xi,\lambda )f_-(\xi ',\lambda
)\, d\lambda \bigg|&\les t^{-\frac12}
\end{align}
 for all $t>0$ and similarly with
$\chi_{[|\xi\lambda |<1,|\xi '\lambda| > 1]}$.
\end{lemma}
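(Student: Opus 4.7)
The plan is to combine the oscillatory representation of $f_{+}$ from Lemma~\ref{lemma11} (valid since $|\xi\lambda|>1$) with the small-energy expansion of $f_{-}$ from Corollary~\ref{cor5} (valid since $|\xi'\lambda|<1$), together with the Wronskian asymptotics of Corollary~\ref{cor7}, to reduce~(\ref{50}) and~(\ref{50'}) to a single oscillatory integral with a slowly varying amplitude. Writing $f_+(\xi,\lambda) = e^{i\xi\lambda}m_+(\xi,\lambda)$ and using the analogue of~(\ref{37}) for $f_-$ on $\xi'<0$ (cf.~Remark~\ref{rem:left-right}), the integrand, divided by $(\la\xi\ra\la\xi'\ra)^{1/2}$, takes the form $e^{i\xi\lambda}\,a(\xi,\xi';\lambda)$ where
\[
a(\xi,\xi';\lambda) = \frac{m_+(\xi,\lambda)\sqrt{\lambda}}{\sqrt{\la\xi\ra}}\cdot \frac{c_0\bigl(1+ic_1\log(\lambda\la\xi'\ra)+ic_4+\cdots\bigr)}{2\bigl(1+ic_3+i(2/\pi)\log\lambda\bigr)}\,\chi(\lambda;\xi,\xi').
\]
Since $|\xi'\lambda|<1$ forces $|\log(\lambda\la\xi'\ra)|\le|\log\lambda|$, the log-ratio is bounded uniformly and has derivative of size $O(1/(\lambda|\log\lambda|))$; together with Lemma~\ref{lemma11} this yields $|a(\lambda)|\les\sqrt{\lambda/\la\xi\ra}$, and after using $1/\xi<\lambda$ on the support to absorb the $\lambda^{-3/2}\xi^{-1}$ contribution from $\partial_\lambda m_+$, the derivative bound $|a'(\lambda)|\les \lambda^{-1/2}/\sqrt{\la\xi\ra}$.

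For the Schr\"odinger estimate~(\ref{50}), the total phase is $t\lambda^2+\xi\lambda$ with stationary point $\lambda_0=-\xi/(2t)$. I would apply Lemma~\ref{lemma12'} with $\delta=t^{-1/2}$ and split exactly as in the proof of Lemma~\ref{lemma13} into three cases: (i) $|\lambda_0|\les 1/\xi$ (equivalently $\xi^2\les t$), so $|\lambda-\lambda_0|\sim \lambda$ on $\supp a$; (ii) $1/\xi\ll|\lambda_0|\les 1$, where the peak contribution $|a(\lambda_0)|/\delta\sim\sqrt{\lambda_0 t/\la\xi\ra}$ is $O(1)$ by the very definition of $\lambda_0$; and (iii) $|\lambda_0|\gg 1$, i.e., $\xi\gg t$, where $|\lambda-\lambda_0|\sim|\lambda_0|$ throughout the support. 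A direct computation in each case shows that the sum of integrals in~(\ref{46}) is $O(1)$, and multiplication by $\delta^2 = t^{-1}$ yields~(\ref{50}).

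For the wave estimate~(\ref{50'}), the phase is the linear expression $(\pm t+\xi)\lambda$, so a single integration by parts bounds the left-hand side of~(\ref{50'}) by
\[
(1+|\pm t+\xi|)^{-1}\int(|a(\lambda)|+|a'(\lambda)|)\,d\lambda\les \frac{1}{\sqrt{\la\xi\ra}\,(1+|\pm t+\xi|)}.
\]
A standard dichotomy finishes: either $|\pm t+\xi|\ge\sqrt{t}$ and the prefactor is $\le t^{-1/2}$, or $|\pm t+\xi|<\sqrt{t}$ which forces $\xi\sim t$ and hence $\sqrt{\la\xi\ra}\ge\sqrt{t}$. The mirrored cutoff $\chi_{[|\xi\lambda|<1,\,|\xi'\lambda|>1]}$ is handled identically by swapping the roles of $\xi$ and $\xi'$ (the sign changes from Remark~\ref{rem:left-right} do not affect the absolute-value bounds).

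The main obstacle is the bookkeeping in case (ii) of the stationary-phase argument, where the support endpoint $\lambda=1/\xi$ and the critical point $\lambda_0$ interact, and one must verify that the $|\log\lambda|$ singularities of $W(\lambda)^{-1}$ and of the $f_-$ expansion cancel against each other to leave $a$ and $a'$ genuinely bounded on the relevant support -- any net $|\log\lambda|$ loss would destroy the estimate. The specific form of Corollaries~\ref{cor5} and~\ref{cor7} is precisely engineered so that this cancellation takes place.
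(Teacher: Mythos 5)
Your proposal is correct and follows essentially the same route as the paper: write $f_+=e^{i\xi\lambda}m_+$ via Lemma~\ref{lemma11}, use the small-energy representation of $f_-$ (equivalently $a_-u_0+b_-u_1$, whose $\log\lambda$ growth cancels against $\lambda/W(\lambda)=O(1/|\log\lambda|)$ from Corollary~\ref{cor7}) to get exactly the amplitude bounds $|a|\les\sqrt{\lambda}/\sqrt{\la\xi\ra}$, $|a'|\les(\lambda\la\xi\ra)^{-1/2}$ of (\ref{51})--(\ref{52}), then run Lemma~\ref{lemma12'} with the same three-case split on $\lambda_0=-\xi/(2t)$ for (\ref{50}) and a single integration by parts plus the $|t\pm\xi|$ dichotomy for (\ref{50'}). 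The only cosmetic difference is that you quote Corollary~\ref{cor5} for $f_-$ while the paper differentiates the representation (\ref{20}) directly (via Corollaries~\ref{cor4} and~\ref{cor6}) to justify the derivative bounds, which is the cleaner way to control $\partial_\lambda f_-$.
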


\begin{proof}
As before, we write $f_+(\xi,\lambda
)=e^{i\xi\lambda}m_+(\xi,\lambda )$.  But because of $|\xi '\lambda
|<1$ we use the representation
$$f_-(\xi ',\lambda )=a_-(\lambda )u_0(\xi',\lambda )+b_-(\lambda )u_1(\xi',\lambda ).$$
In particular,
$$|f_-(\xi ',\lambda )|\les\sqrt{|\lambda |\langle\xi '\rangle}\big|\log |\lambda |\big|.$$
Moreover, from (\ref{34}) and (\ref{39}),
$$|\partial_{\lambda}f_-(\xi ',\lambda )|\les\langle\xi '\rangle^{1/2}|\lambda |^{-1/2}\big|\log |\lambda |\big|$$
provided $|\xi '\lambda |<1$. We apply (\ref{46}) with $\phi
(\lambda )=\lambda^2+\frac{\xi}{t}\lambda$ and
$$a(\lambda )=\frac{\lambda\chi (\lambda )}{W(\lambda )}(\langle\xi\rangle\langle\xi '\rangle )^{-1/2}\chi_{[|\xi\lambda| >1,|\xi '\lambda |<1]}m_+(\xi,\lambda )f_-(\xi ',\lambda ).$$
By the preceding, \beeq\label{51} |a(\lambda )|\les\frac{|\lambda
|^{1/2}}{\sqrt{\langle\xi\rangle}}\chi (\lambda )\chi_{[|\xi\lambda|
>1,|\xi '\lambda |<1]} \eneq and \beeq\label{52} |a'(\lambda )|\les
(|\lambda |\langle\xi\rangle )^{-1/2}\chi (\lambda
)\chi_{[|\xi\lambda| >1,|\xi '\lambda |<1]}.\eneq

\smallskip
\underline{Case 1:} $|\lambda_0|\les 1$, $|\xi\lambda_0|\gtr 1$.

\smallskip
Note in particular $|\xi |\gtr 1$. Here $\lambda_0=-\frac{\xi}{2t}$.
By (\ref{51}),
\begin{align*}
\int\frac{|a(\lambda )|}{|\lambda -\lambda_0|^2+t^{-1}}\,d\lambda &\les\langle\xi\rangle^{-1/2}\int\frac{\sqrt{|\lambda |}}{|\lambda -\lambda_0|^2+t^{-1}}\,d\lambda\\
&\les\langle\xi\rangle^{-1/2}|\lambda_0|^{1/2}\int\frac{d\lambda}{|\lambda -\lambda_0|^2+t^{-1}}+\langle\xi\rangle^{-1/2}\int\frac{|\lambda |^{1/2}}{|\lambda |^2+t^{-1}}\,d\lambda\\
&\les\langle\xi\rangle^{-1/2}t^{1/2}\left(\frac{\xi}{t}\right)^{1/2}+\langle\xi\rangle^{-1/2}t^{1/4}
\les 1
\end{align*}
Here we used that $|\xi\lambda_0|=\frac{\xi^2}{2t}\gtr 1$.\\
Next, write via (\ref{52}) \beeq\label{53} \int_{|\lambda
-\lambda_0|>\delta}\frac{|a'(\lambda )|}{|\lambda
-\lambda_0|}\,d\lambda\les\langle\xi\rangle^{-\frac{1}{2}}\int_{|\lambda
-\lambda_0|>\delta}\frac{1}{|\lambda |^{\frac{1}{2}}|\lambda
-\lambda_0|}\chi_{[|\xi\lambda| >1,|\xi '\lambda |<1]}\, d\lambda.
\eneq Distinguish the cases $\frac{1}{10}|\lambda |>|\lambda
-\lambda_0|$ and $\frac{1}{10}|\lambda |\leq |\lambda -\lambda_0|$
in the integral on the right-hand side.  This yields
\begin{align*}
(\ref{53})&\les\langle\xi\rangle^{-1/2}\int_{|\lambda
-\lambda_0|>\delta} \frac{d\lambda}{|\lambda
-\lambda_0|^{3/2}}+\langle\xi\rangle^{-1/2}
\int_{|\lambda |\les |\lambda_0|}\frac{d\lambda}{|\lambda |^{1/2}}|\lambda_0|^{-1}\\
&+\langle\xi\rangle^{-1/2}\int_{|\lambda |>|\lambda_0|}\frac{d\lambda}{|\lambda |^{3/2}}\\
&\les\langle\xi\rangle^{-1/2}\delta^{-1/2}+\langle\xi\rangle^{-1/2}|\lambda_0|^{-1/2}
\les \left(\frac{t}{\xi^2}\right)^{1/4}+|\xi\lambda_0|^{-1/2} \les
1.
\end{align*}

\smallskip
\underline{Case 2:} $|\lambda_0|\les 1$, $|\xi\lambda_0 |\ll 1$

\smallskip In that case, $|\lambda -\lambda_0|\sim|\lambda |$ on the
support of $a$.  Consequently,
$$\int\frac{|a(\lambda )|}{|\lambda -\lambda_0|^2+t^{-1}}\,d\lambda\les\langle\xi\rangle^{-\frac{1}{2}}\int_{|\xi |^{-1}}^{\infty}|\lambda |^{-\frac{3}{2}}\, d\lambda\les 1.$$
Moreover,
$$\int_{|\lambda -\lambda_0|>\delta}\frac{|a'(\lambda )|}
{|\lambda -\lambda_0|}\,d\lambda\les\int_{|\xi |^{-1}}^{\infty}
\frac{(|\lambda |\langle\xi\rangle )^{-\frac{1}{2}}}{|\lambda
|}\,d\lambda \les 1.$$

\smallskip
\underline{Case 3:} $|\lambda_0|>>1$.

\smallskip
In that case, $|\lambda -\lambda_0|\sim |\lambda_0|$ on $\supp (a)$.
Since $|a(\lambda )|\les 1$ by (\ref{51}), it follows that
$$\int\frac{|a(\lambda )|}{|\lambda -\lambda_0|^2+t^{-1}}\,d\lambda\les 1.$$
Similarly, since $|a'(\lambda )|\les (\xi |\lambda
|)^{-\frac{1}{2}}$, it follows that
$$\int_{|\lambda -\lambda_0|>\delta}\frac{|a'(\lambda )|}{|\lambda -\lambda_0|}\,d\lambda\les\int\frac{(|\lambda |\langle\xi\rangle )^{-\frac{1}{2}}}{|\lambda_0|}\chi (\lambda )\, d\lambda\les 1.$$
This proves (\ref{50}).

To prove \eqref{50'}, we integrate by parts to obtain the upper
bound
\[
(1+|t\pm\xi|)^{-1}\int (|a(\lambda)|+|a'(\lambda)|)\,d\lambda \les
(1+|t\pm\xi|)^{-1} \xi^{-\frac12}\les t^{-\frac12}
\]
and the lemma is proved. The other case $\chi_{[|\xi\lambda |<1,|\xi
'\lambda| > 1]}$ is treated in an analogous fashion.
\end{proof}

The remaining cases for the small energy part of
(\ref{eq:schr_oscill}) are $\xi
>\xi '>|\lambda |^{-1}$ and $\xi '<\xi <-|\lambda |^{-1}$.  By
symmetry it will suffice to treat the former case.  As usual, we
need to consider reflection and transmission coefficients, therefore
we write \beeq\label{54} f_-(\xi,\lambda )=\alpha_-(\lambda
)f_+(\xi,\lambda )+\beta_-(\lambda )\overline{f_+(\xi,\lambda )}.
\eneq Then, with $W(\lambda )=W(f_+(\cdot,\lambda
),f_-(\cdot,\lambda ))$,
$$W(\lambda )=\beta_-(\lambda )W(f_+(\cdot,\lambda ),\overline{f_+(\cdot,\lambda )})=-2i\lambda\beta_-(\lambda )$$
and
\begin{align*}
W(f_-(\cdot,\lambda ),\overline{f_+(\cdot,\lambda )})&=\alpha_-(\lambda )W(f_+(\cdot,\lambda ),\overline{f_+(\cdot,\lambda )})\\
&=-2i\lambda\alpha_-(\lambda ).
\end{align*}
Thus, when $\lambda >0$ is small, \beeq\label{55} \beta_-(\lambda
)=i\left(1+ic_3+i\frac{2}{\pi}\log\lambda \right)+O(|\lambda
|^{\frac{1}{2}-\eps}) \eneq and
\begin{align}\label{56}
\alpha_-(\lambda )&=\frac{1}{-2i\lambda}W\left(a_+(\lambda )u_0(\cdot,\lambda )-b_+(\lambda )u_1(\cdot,\lambda ),\overline{a_+}(\lambda )u_0(\cdot,\lambda )+\overline{b_+}(\lambda )u_1(\cdot,\lambda )\right)\nonumber\\
&=\frac{1}{-2i\lambda}\left(a_+\overline{b_+}(\lambda )+\overline{a_+}(\lambda )b_+(\lambda )\right)\nonumber\\
&=\frac{i}{\lambda}\text{Re}(a_+\overline{b_+}(\lambda))\nonumber\\
&=\frac{i}{\lambda}\text{Re}\left(-i|c_0|^2c_1\lambda(1+ic_1\log\lambda +ic_3)+O(\lambda^{\frac{3}{2}-\eps})\right)\nonumber\\
&=i\left(\frac{2}{\pi}\log\lambda
+c_3\right)+O(\lambda^{\frac{1}{2}-\eps}).
\end{align}
In passing, we remark that $1+|\alpha_-|^2=|\beta_-|^2$.  Finally,
it follows from Corollary~\ref{cor6} that the $O$-terms can be
differentiated once in $\lambda$; they then become
$O(\lambda^{-\frac{1}{2}-\eps})$, $\eps >0$ arbitrary.

\begin{lemma}\label{lemma15}
For any $t>0$ \begin{align}\label{57} \sup_{\xi >\xi
'>0}\bigg|(\langle\xi\rangle\langle\xi '\rangle )^{-\frac{1}{2}}\int
e^{it\lambda^2}\frac{\lambda\chi (\lambda )}{W(\lambda )}\chi_{[\xi
'\lambda >1]}f_+(\xi,\lambda )f_-(\xi ',\lambda )\, d\lambda
\bigg|&\les t^{-1}\\
\label{57'} \sup_{\xi >\xi '>0}\bigg|(\langle\xi\rangle\langle\xi
'\rangle )^{-\frac{1}{2}}\int e^{\pm it\lambda}\frac{\lambda\chi
(\lambda )}{W(\lambda )}\chi_{[\xi '\lambda >1]}f_+(\xi,\lambda
)f_-(\xi ',\lambda )\, d\lambda \bigg|&\les t^{-\frac12}
 \end{align} and similarly for $\sup_{\xi ' <\xi
<0}$ and $\chi_{[|\xi\lambda |>1]}$.
\end{lemma}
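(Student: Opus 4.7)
The strategy follows the reflection/transmission setup already introduced in (54)--(56). Writing $f_-(\xi',\lambda)=\alpha_-(\lambda)f_+(\xi',\lambda)+\beta_-(\lambda)\overline{f_+(\xi',\lambda)}$ and using $W(\lambda)=-2i\lambda\beta_-(\lambda)$, we split
\[
\frac{\lambda\,f_+(\xi,\lambda)f_-(\xi',\lambda)}{W(\lambda)} \;=\; -\frac{1}{2i}\,\frac{\alpha_-(\lambda)}{\beta_-(\lambda)}\,f_+(\xi,\lambda)f_+(\xi',\lambda) \;-\; \frac{1}{2i}\,f_+(\xi,\lambda)\overline{f_+(\xi',\lambda)}.
\]
Inserting $f_+(\xi,\lambda)=e^{i\xi\lambda}m_+(\xi,\lambda)$ from Lemma \ref{lemma11}, the two contributions to \eqref{57} become oscillatory integrals carrying phases $t\lambda^2+(\xi+\xi')\lambda$ and $t\lambda^2+(\xi-\xi')\lambda$, with stationary points $\lambda_{0,+}=-(\xi+\xi')/(2t)$ and $\lambda_{0,-}=-(\xi-\xi')/(2t)$ respectively.

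For the Schr\"odinger bound \eqref{57} we apply Lemma \ref{lemma12'} to each piece with $\delta=t^{-1/2}$. The amplitudes take the form
\[
a(\lambda)=(\langle\xi\rangle\langle\xi'\rangle)^{-1/2}\chi(\lambda)\chi_{[\xi'\lambda>1]}\,q(\lambda)\,m_+(\xi,\lambda)\tilde m(\xi',\lambda),
\]
with $q=\alpha_-/\beta_-$, $\tilde m=m_+$ in the first piece and $q\equiv1$, $\tilde m=\overline{m_+}$ in the second. The asymptotics \eqref{55}--\eqref{56} give $|\alpha_-/\beta_-|\les 1$, and exploiting the cancellation of the leading $\lambda^{-1}\log\lambda$ terms in $\alpha_-'\beta_--\alpha_-\beta_-'$ yields $|\partial_\lambda(\alpha_-/\beta_-)|\les(|\lambda|\log^2|\lambda|)^{-1}$. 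Together with the Lemma \ref{lemma11} bounds on $m_+$ and $\partial_\lambda m_+$ valid in the range $\xi,\xi'>|\lambda|^{-1}$, the amplitude satisfies exactly the size and derivative estimates \eqref{48}--\eqref{49} of Lemma \ref{lemma13}. Consequently the three-case analysis of Lemma \ref{lemma13} (depending on whether $|\lambda_{0,\pm}|\les 1$ with $|\lambda_{0,\pm}|\gtr\xi^{-1}+(\xi')^{-1}$, or $|\lambda_{0,\pm}|\ll\xi^{-1}+(\xi')^{-1}$, or $|\lambda_{0,\pm}|\gg 1$) transfers verbatim and delivers \eqref{57}.

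For the wave bound \eqref{57'} we integrate by parts once against $e^{\pm it\lambda}e^{i(\xi\pm\xi')\lambda}$, producing a factor $(1+|t\pm(\xi\pm\xi')|)^{-1}$ times $\int(|a(\lambda)|+|a'(\lambda)|)\,d\lambda$. The amplitude bounds above give this integral $\les(\langle\xi\rangle\langle\xi'\rangle)^{-1/2}$ (the $|\lambda|^{-1}\log^{-2}|\lambda|$ piece of $|a'|$ is integrable near $0$, and the $\lambda^{-2}(\xi^{-1}+(\xi')^{-1})$ piece integrates against $\chi_{[\xi'\lambda>1]}$ to a constant). Since the support forces $\xi>\xi'\gtr 1$, a short case split on the size of $|t\pm(\xi\pm\xi')|$ relative to $t$ shows $(1+|t\pm(\xi\pm\xi')|)^{-1}(\langle\xi\rangle\langle\xi'\rangle)^{-1/2}\les t^{-1/2}$, yielding \eqref{57'}. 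The symmetric case $\xi'<\xi<0$ follows either by applying the reflection $\xi\mapsto -\xi$ (which swaps $f_\pm$) or, in the non-symmetric geometry, by repeating the argument with $\alpha_+,\beta_+$ and the corresponding expansion from Remark \ref{rem:left-right}.

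The main obstacle is the derivative estimate for $\alpha_-/\beta_-$. Because both $\alpha_-$ and $\beta_-$ carry $\log\lambda$ singularities at $\lambda=0$, a crude bound gives $\partial_\lambda(\alpha_-/\beta_-)\sim(|\lambda|\log|\lambda|)^{-1}$, which is insufficient for Case 2 of the Lemma \ref{lemma13} analysis. The saving is the exact cancellation of the $\lambda^{-1}\log\lambda$ terms in $\alpha_-'\beta_--\alpha_-\beta_-'$, which follows from the fact that $\alpha_-$ and $\beta_-$ share the same logarithmic coefficient (up to a factor of $i$) in \eqref{55}--\eqref{56}, together with the differentiability of the $O(\lambda^{1/2-\eps})$ errors furnished by Corollary \ref{cor6}. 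After this cancellation one gets $(|\lambda|\log^2|\lambda|)^{-1}$, the integrable weight that already appeared in Case 2 of Lemma \ref{lemma13}, and the rest of the argument is essentially bookkeeping.
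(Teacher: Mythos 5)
Your proposal is correct and follows essentially the same route as the paper: the splitting $f_-=\alpha_-f_+ +\beta_-\overline{f_+}$ from \eqref{54} (the paper keeps $\lambda\alpha_-/W$ and $\lambda\beta_-/W$, which coincide with your $\alpha_-/\beta_-$ and a constant since $W=-2i\lambda\beta_-$), the stationary phase Lemma~\ref{lemma12'} with phases $t\lambda^2+(\xi\pm\xi')\lambda$ and the same three-case analysis, and a single integration by parts with the integrable bound on $a'$ for \eqref{57'}. The logarithmic cancellation you isolate in $\partial_\lambda(\alpha_-/\beta_-)$ is exactly what underlies the paper's refined derivative estimate, which the paper likewise only invokes for the wave bound.
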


\begin{proof}
Using (\ref{54}), we reduce (\ref{57}) to two estimates:
\begin{equation}\label{58}
\sup_{\xi >\xi '>0}\bigg|\int e^{it\lambda^2}e^{i\lambda (\xi +\xi
')}\frac{\lambda\chi (\lambda )}{W(\lambda )} \frac{\chi_{[\xi
'|\lambda |>1]}}{\sqrt{\langle\xi\rangle\langle\xi '\rangle
}}m_+(\xi,\lambda )m_+(\xi ',\lambda )\alpha_-(\lambda )\, d\lambda
\bigg| \les t^{-1}
\end{equation}
and
\begin{align}\label{59}
\sup_{\xi >\xi '>0}&\bigg| \int e^{it\lambda^2}e^{i\lambda (\xi -\xi
')}\frac{\lambda\chi (\lambda )} {W(\lambda )}\frac{\chi_{[\xi
'|\lambda |>1]}}{\sqrt{\langle\xi\rangle\langle\xi' \rangle
}}m_+(\xi,\lambda )\overline{m_+(\xi ',\lambda )}\beta_-(\lambda )\,
d\lambda \bigg| \les t^{-1}
\end{align}
We apply (\ref{46}) to (\ref{58}) with fixed $\xi >\xi '>0$ and
\begin{align*}
\phi (\lambda )&=\lambda^2+\frac{\lambda}{t}(\xi +\xi '),\\
a(\lambda )&=(\langle\xi\rangle\langle\xi '\rangle
)^{-\frac{1}{2}}\frac{\lambda\chi (\lambda )}{W(\lambda )}\chi_{[\xi
'|\lambda |>1]}\alpha_-(\lambda )m_+(\xi,\lambda )m_+(\xi ',\lambda
).
\end{align*}
Then from (\ref{56}), \beeq\label{60} |a(\lambda
)|\les(\langle\xi\rangle\langle\xi '\rangle )^{-\frac{1}{2}}\chi
(\lambda )\chi_{[\xi '|\lambda |>1]} \eneq and from our derivative
bounds on $W$, $\alpha_-$, and $m_+(\xi,\lambda)$, see (\ref{41})
for the latter, we conclude that \beeq\label{61} |a'(\lambda )|\les
|\lambda |^{-1}(\langle\xi\rangle\langle\xi '\rangle
)^{-\frac{1}{2}}\chi (\lambda )\chi_{[\xi '|\lambda |>1]}. \eneq
This bound will suffice for the Schr\"odinger evolution. For the
wave evolution, we also need an integrable estimate on
$|a'(\lambda)|$. It is
\[
|a'(\lambda)|\les |\lambda |^{-1}(\langle\xi\rangle\langle\xi
'\rangle )^{-\frac{1}{2}}\chi (\lambda )\chi_{[\xi '|\lambda |>1]}
\Big( |\log\lambda|^{-2}+ |\lambda\xi'|^{-1}\Big)
\]
which one obtains by combining \eqref{56} with our asymptotic bound
for $\frac{\lambda}{W(\lambda)}$ above.
\smallskip
\underline{Case 1:} Suppose $|\lambda_0|\les 1$ and $|\xi '\lambda_0
|>1$, where $\lambda_0=-\frac{\xi +\xi '}{2t}$.  Note $\xi >\xi
'\gtr 1$.

\smallskip Then
\[
\int\frac{|a(\lambda )|}{|\lambda -\lambda_0|^2+t^{-1}}\,d\lambda
\les (\langle\xi\rangle\langle\xi '\rangle
)^{-\frac{1}{2}}\int\frac{d\lambda}{|\lambda -\lambda_0|^2+t^{-1}}
\les\sqrt{\frac{t}{\xi\xi '}}\les 1
\]
since $|\xi '\lambda_0|\sim\frac{\xi\xi '}{t}>1$. As for the
derivative term in (\ref{46}), we infer from (\ref{61}) that
\beeq\label{62} \int_{|\lambda -\lambda_0|>\delta}\frac{|a'(\lambda
)|}{|\lambda -\lambda_0 |}\,d\lambda\les(\langle\xi\rangle\langle\xi
'\rangle )^{-\frac{1}{2}}\int_{|\lambda
-\lambda_0|>\delta}\frac{d\lambda}{|\lambda ||\lambda -\lambda_0
|}\chi_{[|\lambda\xi '|>1]} \eneq Again, we need to distinguish
between $|\lambda -\lambda_0|>\frac{1}{10}|\lambda_0|$ and $|\lambda
-\lambda_0|<\frac{1}{10}|\lambda_0|$.  Thus, since $\xi\xi '>t$,
\begin{align*}
(\ref{62})&\les (\langle\xi\rangle\langle\xi '\rangle )^{-\frac{1}{2}}
\int_{1/\xi '}^{\infty}\frac{d\lambda}{\lambda^2}+(\langle\xi\rangle\langle\xi '\rangle )^{-1/2}
|\lambda_0|^{-1}\log \left(t^{1/2}|\lambda_0|\right)\\
&\les 1+\frac{t^{\frac{1}{2}}}{\xi}\log
\left(\frac{\xi}{t^{1/2}}\right) \les 1
\end{align*}
since also $\xi^2>t$.

\smallskip
\underline{Case 2:} $|\lambda_0|\les 1$, $|\lambda_0|\ll
\frac{1}{\xi '}$.

\smallskip Then $|\lambda -\lambda_0|\sim|\lambda |$ on the support
of $a(\lambda )$.  Hence,
$$\int\frac{|a(\lambda )|}{|\lambda -\lambda_0|^2+t^{-1}}\,d\lambda\les
(\langle\xi\rangle\langle\xi '\rangle )^{-\frac{1}{2}}\int_{1/\xi '}^{\infty}
\frac{d\lambda}{\lambda^2}\les\sqrt{\frac{\xi '}{\langle\xi\rangle}}<1$$
and
$$\int_{|\lambda -\lambda_0|>\delta}\frac{|a'(\lambda )|}{|\lambda -\lambda_0|}\,d\lambda
\les (\langle\xi\rangle\langle\xi '\rangle )^{-\frac{1}{2}}\int_{1/\xi '}^{\infty}\frac{d\lambda}{\lambda^2}<1.$$

\smallskip
\underline{Case 3:} $|\lambda_0|>>1$, $|\lambda_0|\gtr\frac{1}{\xi
'}$.

\smallskip
Then $|\lambda -\lambda_0|\sim |\lambda_0|$ on $\supp (a)$.
Therefore, $|a (\lambda )|\les 1$ implies that
$$\int\frac{|a(\lambda )|}{|\lambda -\lambda_0|^2+t^{-1}}\,d\lambda\les 1$$
and
\begin{align*}
\int_{|\lambda -\lambda_0|>\delta}\frac{|a'(\lambda )|} {|\lambda
-\lambda_0|}\,d\lambda&\les (\langle\xi\rangle\langle\xi '\rangle
)^{-\frac{1}{2}} |\lambda_0|^{-1}\int_{\frac{1}{\langle\xi
'\rangle}}^{1}\frac{d\lambda}{|\lambda |}
\les\frac{1}{|\lambda_0|}\frac{1}{\langle\xi '\rangle}\log\langle\xi
'\rangle \les 1.
\end{align*}
This concludes the proof of (\ref{58}). (\ref{59}) is completely
analogous and~\eqref{57} follows.

As usual, integration by parts proves that \eqref{57'}  is dominated
by
\[
(1+|t\pm (\xi \pm \xi')|)^{-1} \int
(|a(\lambda)|+|a'(\lambda)|)\,d\lambda \les
(\langle\xi\rangle\langle\xi '\rangle )^{-\frac{1}{2}} (1+|t\pm (\xi
\pm \xi')|)^{-1}
\] which is $\les t^{-\frac12}$.

Finally, the  case of $\xi '<\xi <0$, $|\xi\lambda |>1$ follows from
the case considered in this proof by a reflection around $\xi=0$.
\end{proof}

We are done with the contributions of small $\lambda$ to the
oscillatory integral~(\ref{eq:schr_oscill})
and~\eqref{eq:wave_oscill}. To conclude the proof of
\eqref{eq:schr_d} for $d=1$ it suffices to prove the following
statement. The wave equation will be treated separately, see
Lemma~\ref{lem:wave_crux}.

\begin{lemma}\label{lemma16}
For all $t>0$, \beeq\label{63} \sup_{\xi >\xi
'}\bigg|(\langle\xi\rangle\langle\xi '\rangle
)^{-\frac{1}{2}}\int_{-\infty}^{\infty}e^{it\lambda^2}\frac{\lambda
(1-\chi )(\lambda )}{W(\lambda )}f_+(\xi,\lambda )f_-(\xi ',\lambda
)\, d\lambda \bigg|\les t^{-1}. \eneq
\end{lemma}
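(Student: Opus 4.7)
The plan is as follows. Since $(1-\chi)(\lambda)$ localizes the integration to $|\lambda|\gtrsim 1$, we are in the high-energy regime and exploit the representation $f_\pm(\xi,\lambda)=e^{\pm i\lambda\xi}m_\pm(\xi,\lambda)$, obtaining
\[
\frac{f_+(\xi,\lambda)f_-(\xi',\lambda)}{W(\lambda)}=e^{i\lambda(\xi-\xi')}\,\frac{m_+(\xi,\lambda)m_-(\xi',\lambda)}{W(\lambda)}.
\]
The integral in \eqref{63} then takes the form $\int e^{it\phi(\lambda)}a(\lambda)\,d\lambda$ with phase $\phi(\lambda)=\lambda^2+\frac{\xi-\xi'}{t}\lambda$, stationary point $\lambda_0=-\frac{\xi-\xi'}{2t}$, and amplitude
\[
a(\lambda)=(\la\xi\ra\la\xi'\ra)^{-\frac12}\,\frac{\lambda(1-\chi)(\lambda)}{W(\lambda)}\,m_+(\xi,\lambda)m_-(\xi',\lambda).
\]
A Volterra iteration of \eqref{eq:Volt} combined with $|V(\eta)|\les\la\eta\ra^{-2}$ yields the uniform high-frequency estimates $|m_\pm(\xi,\lambda)-1|\les|\lambda|^{-1}\la\xi\ra^{-1}$ and $|\partial_\lambda m_\pm(\xi,\lambda)|\les|\lambda|^{-1}$ for $|\lambda|\gtrsim 1$, $\xi\in\R$; the standard asymptotics $W(\lambda)\sim 2i\lambda$ yield $|(\lambda/W(\lambda))^{(k)}|\les|\lambda|^{-k}$ in the same regime. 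Consequently $|a(\lambda)|+(1+|\lambda|)|a'(\lambda)|\les(\la\xi\ra\la\xi'\ra)^{-\frac12}\chi_{[|\lambda|\gtrsim 1]}$.

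After translating $\lambda\mapsto\lambda-\lambda_0$ (note $\phi''\equiv 2$), Lemma~\ref{lemma12'} reduces the desired bound to
\[
\int\frac{|a(\lambda)|\,d\lambda}{t^{-1}+(\lambda-\lambda_0)^2}+\int_{|\lambda-\lambda_0|>t^{-1/2}}\frac{|a'(\lambda)|\,d\lambda}{|\lambda-\lambda_0|}\les 1,
\]
which I would verify by splitting into $|\lambda_0|\les 1$ (the support of $a$ is then a bounded set bounded away from $0$, and both integrals are uniformly $O(1)$) and $|\lambda_0|\gg 1$ (where $|\lambda-\lambda_0|\sim|\lambda_0|$ on the relevant part of the support, and the prefactor $(\la\xi\ra\la\xi'\ra)^{-1/2}$ compensates any growth arising through $\lambda_0$), exactly as in the proofs of Lemmas~\ref{lemma13}--\ref{lemma15}.

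The main obstacle is that $a(\lambda)$ does not decay as $|\lambda|\to\infty$, so the two integrals above diverge at infinity and Lemma~\ref{lemma12'} cannot be applied directly. I would address this by first performing $N$ preliminary integrations by parts in \eqref{63} via $e^{it\lambda^2}=(2it\lambda)^{-1}\partial_\lambda e^{it\lambda^2}$, which is legitimate on $\supp(1-\chi)$ where $\lambda$ is bounded away from~$0$. Each step gains a factor $(t|\lambda|)^{-1}$ at the cost of differentiating the amplitude, and the Volterra representation shows that every $\partial_\lambda^j a$ remains bounded (with an extra $|\lambda|^{-j}$ gain in the tail $|\lambda|\gg 1$, since $\partial_\lambda^j[\lambda/W(\lambda)]$ and $\partial_\lambda^j m_\pm$ decay accordingly). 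Taking $N=2$ reduces the contribution of $\{|\lambda|\gg\max(1,|\lambda_0|)\}$ to $O(t^{-2}(\la\xi\ra\la\xi'\ra)^{-1/2})$, while the remaining piece of integration is compactly supported in $\lambda$ and is handled by Lemma~\ref{lemma12'} along the case split above, yielding the required $t^{-1}$ bound.
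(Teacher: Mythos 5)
There is a genuine gap, and it sits exactly where the paper's proof does its real work. Your key claim --- that the Volterra iteration gives $|m_\pm(\xi,\lambda)-1|\les |\lambda|^{-1}\la\xi\ra^{-1}$ and $|\partial_\lambda m_\pm(\xi,\lambda)|\les|\lambda|^{-1}$ \emph{uniformly for all} $\xi\in\R$ --- is false. The bounds of this type (see \eqref{65}--\eqref{68}) hold for $m_+$ only on $\xi\ge 0$ and for $m_-$ only on $\xi'\le 0$, because they use $\int_\xi^\infty|V|\les\la\xi\ra^{-1}$. On the wrong half-line the Jost solution contains a reflected wave: for $\xi'\to+\infty$ one has $f_-(\xi',\lambda)=\alpha_-(\lambda)f_+(\xi',\lambda)+\beta_-(\lambda)\overline{f_+(\xi',\lambda)}$, so $m_-(\xi',\lambda)\approx \alpha_-(\lambda)e^{2i\lambda\xi'}+\beta_-(\lambda)$. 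Then $|m_-(\xi',\lambda)-1|$ is only $O(|\lambda|^{-1})$ (no $\la\xi'\ra^{-1}$ gain), and $\partial_\lambda m_-(\xi',\lambda)$ grows \emph{linearly} in $\xi'$; worse, the integrand carries a second oscillation $e^{i\lambda(\xi+\xi')}$ with its own stationary point $-\frac{\xi+\xi'}{2t}$, so the case $\xi>\xi'>0$ (and its mirror $0>\xi>\xi'$) simply cannot be forced into a single stationary-phase argument with phase $\lambda^2+\frac{\xi-\xi'}{t}\lambda$ and a ``slowly varying'' amplitude. This is why the paper splits into the sign cases and, in the same-side case, uses the decomposition \eqref{54} together with the high-energy bounds $\alpha_-(\lambda)=O(\lambda^{-2})$, $\alpha_-'(\lambda)=O(\lambda^{-3})$ and $\beta_-(\lambda)=1+O(\lambda^{-1})$, $\beta_-'(\lambda)=O(\lambda^{-2})$ (see \eqref{eq:alpha-}, \eqref{eq:beta-}), estimating the two resulting integrals \eqref{70} and \eqref{71} separately. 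None of this appears in your proposal, and without it the amplitude bounds you assert, and hence the whole stationary-phase step, break down precisely in that regime.

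A secondary point: the ``main obstacle'' you identify is not actually there, at least not in the case $\xi>0>\xi'$ where your setup is legitimate. With the correct bounds on that side ($\partial_\lambda m_\pm=O(\lambda^{-2}\la\cdot\ra^{-1})$ and $(\lambda/W(\lambda))'=O(\lambda^{-2})$ since $W(\lambda)=-2i\lambda+O(1)$, $W'(\lambda)=-2i+O(\lambda^{-1})$), one gets $|a(\lambda)|\les(\la\xi\ra\la\xi'\ra)^{-1/2}$ and $|a'(\lambda)|\les(\la\xi\ra\la\xi'\ra)^{-1/2}|\lambda|^{-2}$, and both integrals in Lemma~\ref{lemma12'} converge (the kernel $(\delta^2+|\lambda-\lambda_0|^2)^{-1}$ is integrable, and $\int_{|\lambda|\gtr1}|\lambda|^{-2}|\lambda-\lambda_0|^{-1}d\lambda<\infty$); the paper applies the lemma directly, distinguishing only $|\lambda_0|\les1$ from $|\lambda_0|\gtr1$ (where $\xi+|\xi'|\gtr t$ rescues the factor $t^{1/2}$). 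Your preliminary integrations by parts via $e^{it\lambda^2}=(2it\lambda)^{-1}\partial_\lambda e^{it\lambda^2}$, with a splitting of the $\lambda$-axis at $\max(1,|\lambda_0|)$, are therefore unnecessary and would need extra care (cutoff terms, factors $(\xi-\xi')/(t\lambda)$); but this is a repairable inefficiency, whereas the missing treatment of the same-side case, i.e.\ the reflection/transmission splitting, is the essential omission.
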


\begin{proof}
We observed above, see (\ref{54}), that $W(\lambda
)=-2i\lambda\beta_-(\lambda )$. Since $|\beta_-(\lambda )|\geq 1$,
this implies that $|W(\lambda )|\geq 2|\lambda |$. In particular,
$W(\lambda )\neq 0$ for every $\lambda\neq 0$. In order to prove
(\ref{63}), we will need to distinguish the cases $\xi >0>\xi '$,
$\xi >\xi '>0$, and $0>\xi >\xi '$.  By symmetry, it will suffice to
consider the first two.

\smallskip
\underline{Case 1:} $\xi >0>\xi '$.

\smallskip
In this case we need to prove that \beeq\label{64} \sup_{\xi >0>\xi
'}\bigg|(\langle\xi\rangle\langle\xi '\rangle )^{-\frac{1}{2}}\int
e^{it[\lambda^2+\frac{\xi -\xi '}{t}\lambda]}\frac{\lambda (1-\chi
)(\lambda )}{W(\lambda )}m_+(\xi,\lambda )m_-(\xi ',\lambda )\,
d\lambda \bigg|\les t^{-1}. \eneq Apply (\ref{46}) with $\phi
(\lambda )=\lambda^2+\frac{\xi -\xi '}{t}\lambda$ and
$$a(\lambda )= (\langle\xi\rangle\langle\xi '\rangle )^{-\frac{1}{2}}
\frac{\lambda (1-\chi )(\lambda )}{W(\lambda )}m_+(\xi,\lambda )m_-(\xi ',\lambda ).$$
Hence, with $\lambda_0=-\frac{\xi -\xi '}{2t}$,
\begin{align*}
(\ref{64})&\les t^{-1}\left(\int\frac{|a(\lambda )|}{|\lambda
-\lambda_0|^2+t^{-1}}\,d\lambda
+\int\limits_{|\lambda -\lambda_0|>\delta}\frac{|a'(\lambda )|}{|\lambda -\lambda_0|}\,d\lambda\right)\\
&=:t^{-1}(A+B).
\end{align*}
If $|\lambda_0|\ll 1$, then
$$A\les ||a||_{\infty}\les 1.$$
On the other hand, if $|\lambda_0|\gtr 1$, then $\xi +|\xi '|\gtr t$
so that
$$A\les t^{\frac{1}{2}}||a||_{\infty}\les t^{\frac{1}{2}}(\langle\xi\rangle\langle\xi '\rangle )^{-\frac{1}{2}}
\les\sqrt{\frac{t}{\langle\xi\rangle\langle\xi '\rangle}}\les 1.$$
Here we used that
$$\sup_{\xi}\sup_{|\lambda |\gtr 1}|m_{\pm}(\xi,\lambda )|\les 1$$
which follows from the fact that \beeq\label{65} m_+(\xi,\lambda
)=1+\int_{\xi}^{\infty}\frac{1-e^{-2i(\tilde{\xi} -\xi
)\lambda}}{2i\lambda}V(\tilde{\xi} )m_+(\tilde{\xi},\lambda
)d\tilde{\xi} \eneq with
$V(\tilde{\xi}=O(\langle\tilde{\xi}\rangle^{-2})$.  Moreover, from
our assumptions on $r(x)$ we recall that
$$\bigg|\frac{d^\ell}{d\xi^\ell}V(\xi )\bigg|\les\langle\xi\rangle^{-2-\ell},\quad\forall\; \ell\geq 0.$$
We shall need these bounds to estimate $B$ above. From (\ref{65}),
for $\xi\geq 0$
$$m_+(\xi,\lambda )=1+O(\lambda^{-1}\langle\xi\rangle^{-1})$$
as well as for $\xi\geq 0$
\begin{align}\label{66}
\partial^j_{\xi}m_+(\xi,\lambda )&=O(\lambda^{-1}\langle\xi\rangle^{-1-j}),\qquad j=1,2
\end{align}
\begin{align}\label{67}
\partial_{\lambda}m_+(\xi,\lambda )&=O(\lambda^{-2}\langle\xi\rangle^{-1})
\end{align}
\begin{align}\label{68}
\partial_{\lambda}\partial_{\xi}m_+(\xi,\lambda )&=O(\lambda^{-2}\langle\xi\rangle^{-2})
\end{align}
To verify (\ref{66}), one checks that
\begin{align}\label{69}
\partial_{\xi}m_+(\xi,\lambda )&=
\frac{1}{2i\lambda}\int_{\xi}^{\infty}[1-e^{2i(\xi
-\tilde{\xi})\lambda}]V'(\tilde{\xi}) m_+(\tilde{\xi},\lambda )
\, d\tilde{\xi}\\
&+\frac{1}{2i\lambda}\int_{\xi}^{\infty}[1-e^{2i(\xi
-\tilde{\xi})\lambda}]
V(\tilde{\xi})\partial_{\tilde{\xi}}m_+(\tilde{\xi},\lambda )\,
d\tilde{\xi}.\nonumber
\end{align}
By our estimates on $V$, the integral on the right-hand side of
(\ref{69}) is $O(\lambda^{-1}\langle\xi\rangle^{-2})$ and (\ref{66})
follows for $j=1$. For $j=2$ note that
\begin{align}\nonumber
\partial_{\xi}^2m_+(\xi,\lambda )&=
\frac{1}{2i\lambda}\int_{\xi}^{\infty}[1-e^{2i(\xi
-\tilde{\xi})\lambda}]\, V''(\tilde{\xi}) m_+(\tilde{\xi},\lambda )
\, d\tilde{\xi}\\
& + \frac{1}{i\lambda}\int_{\xi}^{\infty}[1-e^{2i(\xi
-\tilde{\xi})\lambda}]\, V'(\tilde{\xi})
\partial_{\tilde\xi} m_+(\tilde{\xi},\lambda )
\, d\tilde{\xi}\nonumber\\
&+\frac{1}{2i\lambda}\int_{\xi}^{\infty}[1-e^{2i(\xi
-\tilde{\xi})\lambda}]
V(\tilde{\xi})\partial^2_{\tilde{\xi}}m_+(\tilde{\xi},\lambda )\,
d\tilde{\xi}.\nonumber
\end{align}
which again implies the desired bound. For (\ref{67}) we compute
\begin{align*}
\partial_{\lambda}m_+(\xi,\lambda )=-&\int_{\xi}^{\infty}\frac{1-e^{2i(\xi -\tilde{\xi})\lambda}}
{2i\lambda^2}V(\tilde{\xi})m_+(\tilde{\xi},\lambda )\, d\tilde{\xi}\\
&+\frac{1}{2i\lambda^2}\int_{\xi}^{\infty}e^{2i(\xi -\tilde{\xi})\lambda}\partial_{\tilde{\xi}}
\left[(\xi -\tilde{\xi})V(\tilde{\xi})m_+(\tilde{\xi},\lambda )\right]\, d\tilde{\xi}\\
&+\int_{\xi}^{\infty}\frac{1-e^{2i(\xi
-\tilde{\xi})\lambda}}{2i\lambda}V(\tilde{\xi}
)\partial_{\lambda}m_+(\tilde{\xi},\lambda )\, d\tilde{\xi}
\end{align*}
so that
$$\partial_{\lambda}m_+(\xi,\lambda )=O(\lambda^{-2}\langle\xi\rangle^{-1})$$
as claimed. Finally, compute
\begin{align*}
\partial^2_{\xi\lambda}m_+(\xi,\lambda )&=\frac{1}{\lambda}\int_{\xi}^{\infty}e^{2i(\xi -\tilde{\xi})
\lambda}V(\tilde{\xi})m_+(\tilde{\xi},\lambda )\, d\tilde{\xi}\\
&+\frac{1}{2i\lambda^2} V(\xi )m_+(\xi,\lambda )+
\frac{1}{2i\lambda}\int_{\xi}^{\infty}e^{2i(\xi -\tilde{\xi})\lambda}\partial_{\tilde{\xi}}
[(\xi -\tilde{\xi})Vm_+(\tilde{\xi},\lambda )]\, d\tilde{\xi}\\
&+\frac{1}{2i\lambda^2}\int_{\xi}^{\infty}e^{2i(\xi
-\tilde{\xi})\lambda}
\partial_{\tilde{\xi}}[V(\tilde{\xi})m_+(\tilde{\xi},\lambda )]\, d\tilde{\xi}\\
&-\int_{\xi}^{\infty}e^{2i(\xi
-\tilde{\xi})\lambda}V(\tilde{\xi})\partial_{\lambda}m_+(\tilde{\xi},\lambda
)\, d\tilde{\xi}
\end{align*}
Integrating by parts in the first and third terms, and using the
previous bounds,  yields the desired estimate. As a corollary, we
obtain (take $\xi =0$)
\begin{align*}
W(\lambda )&=W(f_+(\cdot,\lambda ),f_-(\cdot,\lambda ))\\
&=m_+(\xi,\lambda )[m'_-(\xi,\lambda )-i\lambda m_-(\xi,\lambda )]-m_-(\xi,\lambda )
[m'_+(\xi,\lambda )+i\lambda m_+(\xi,\lambda )]\\
&=-2i\lambda (1+O(\lambda^{-1}))+O(\lambda^{-1}) =-2i\lambda+O(1)
\end{align*}
with derivatives $W'(\lambda )=-2i+O(\lambda^{-1})$ as $|\lambda
|\to\infty$.

\noindent  Next, we estimate $B$. First, we conclude from our bounds
on $W(\lambda )$ and $m_+(\xi,\lambda )$ as well as $m_-(\xi
',\lambda )$ that
$$|a'(\lambda )|\les (\langle\xi\rangle\langle\xi '\rangle )^{-\frac{1}{2}}\chi_{[|\lambda |\gtr 1]}|\lambda |^{-2}.$$
Let us first consider the case where $|\lambda_0|\gtr 1$.  Then
\begin{align*}
B&\les(\langle\xi\rangle\langle\xi '\rangle )^{-1/2}
\int_{\binom{|\lambda -\lambda_0|>\delta}{|\lambda |\gtr 1}}\frac{d\lambda}{|\lambda |^2|\lambda -\lambda_0|}\\
&\les (\langle\xi\rangle\langle\xi '\rangle
)^{-1/2}\left\{\int_1^{\infty}\frac{d\lambda}
{\lambda^3}+\frac{1}{|\lambda_0|^2}\int_{\frac{|\lambda_0|}{5}>|\lambda
-\lambda_0 |>\delta}
\frac{d\lambda}{|\lambda -\lambda_0|}\right\}\\
&\les 1+\sqrt{\frac{t}{\langle\xi\rangle\langle\xi
'\rangle}}\frac{1}{|\lambda_0|t^{1/2}}\log_+(\lambda_0t^{1/2}) \les
1
\end{align*}
Here we used that $\frac{t}{\langle\xi\rangle\langle\xi
'\rangle}\les 1$ which follows from $|\lambda_0|\gtr 1$. If
$|\lambda_0|\ll 1$, then $|\lambda-\lambda_0|\sim|\lambda |$ on the
support of $a$; thus $B\les 1$ trivially. This finishes the case
$\xi >0>\xi '$.

\smallskip
\underline{Case 2:} To deal with the case $\xi >\xi '>0$, we use
(\ref{54}).  Thus,
$$f_-(\xi ',\lambda )=\alpha_-(\lambda )f_+(\xi ',\lambda )+\beta_-(\lambda )\overline{f_+(\xi ',\lambda )}$$
where
\begin{align*}
\alpha_-(\lambda )&=\frac{W(f_-(\cdot,\lambda ),\overline{f_+(\cdot,\lambda )})}{-2i\lambda}\\
\beta_-(\lambda )&=\frac{W(f_+(\cdot,\lambda ),f_-(\cdot,\lambda
))}{-2i\lambda}=\frac{W(\lambda )}{-2i\lambda}
\end{align*}
From our large $\lambda$ asymptotics of $W(\lambda )$ we deduce that
\begin{equation}\label{eq:beta-}\beta_-(\lambda )=1+O(\lambda ^{-1}), \quad \beta_-'(\lambda
)=O(\lambda^{-2}).
\end{equation} For $\alpha_-(\lambda )$ we calculate, again at
$\xi =0$,
\begin{align*}
W(f_-(\cdot,\lambda ),\overline{f_+(\cdot,\lambda
)})=\,&m_-(\xi,\lambda )(\overline{m}_+'(\xi,\lambda )
-2i\lambda\overline{m}_+(\xi,\lambda ))\\
&-\overline{m}_+(\xi,\lambda )(m'_-(\xi,\lambda )-2i\lambda m_-(\xi,\lambda ))\\
=\,&m_-(\xi,\lambda )\overline{m}'_+(\xi,\lambda)-m_-'(\xi,\lambda )\overline{m}_+(\xi,\lambda )\\
=\,&O(\lambda^{-1})
\end{align*}
so that
\begin{equation}\label{eq:alpha-}\alpha_-(\lambda )=O(\lambda^{-2}),\quad \alpha'_-(\lambda )=O(\lambda^{-3}).
\end{equation}
Thus, we are left with proving the two bounds
\begin{align}\label{70}
\sup_{\xi >\xi
'>0}&\bigg|\int_{-\infty}^{\infty}e^{it\lambda^2}e^{i\lambda (\xi
+\xi ')}\frac{\lambda (1-\chi (\lambda ))}{W(\lambda
)}\alpha_-(\lambda )\frac{m_+(\xi,\lambda )m_+(\xi ',\lambda
)}{\sqrt{\langle\xi\rangle\langle\xi '\rangle }}\, d\lambda \bigg|
\les t^{-1} \\
\label{71} \sup_{\xi >\xi
'>0}&\bigg|\int_{-\infty}^{\infty}e^{it\lambda^2}e^{i\lambda (\xi
-\xi ')}\frac{\lambda (1-\chi (\lambda ))}{W(\lambda
)}\beta_-(\lambda )\frac{m_+(\xi,\lambda )\overline{m_+(\xi
',\lambda )}}{\sqrt{\langle\xi\rangle\langle\xi '\rangle }}\,
d\lambda \bigg| \les t^{-1}
\end{align}
for any $t>0$. This, however, follows by means of the exact same
arguments which we use to prove (\ref{64}).  Note that in (\ref{70})
the critical point of the phase is
$$\lambda_0=-\frac{\xi +\xi '}{2t}$$
whereas in (\ref{71}) it is $\lambda_0=-\frac{\xi -\xi '}{2t}.$ In
either case it follows from $|\lambda_0|\gtr 1$ that $\xi\gtr t$.
Hence we can indeed argue as in Case~1. This finishes the proof of
the lemma, and thus also of Theorem~\ref{thm1}.
\end{proof}

Now for the wave case. We will tacitly use some elements of the
previous proof.

\begin{lemma}\label{lem:wave_crux}
For all $t>0$, \begin{align}\label{eq:wave_biglam} &
\bigg|\int_{-\infty}^\xi (\langle\xi\rangle\langle\xi '\rangle
)^{-\frac{1}{2}}\int_{-\infty}^{\infty}e^{\pm
it\lambda}\frac{\lambda (1-\chi )(\lambda )}{W(\lambda
)}f_+(\xi,\lambda )f_-(\xi ',\lambda
)\, d\lambda\;  \phi(\xi')\, d\xi'\bigg| \nonumber\\
& \les t^{-\frac12} \int \big(|\phi(\xi')|+|\phi'(\xi')|\big)\, d\xi'.
\end{align}
with a constant that does not depend on $\xi$.
\end{lemma}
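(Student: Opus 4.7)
The argument parallels that of Lemma~\ref{lemma16} (the large-energy Schr\"odinger estimate), but two adjustments are needed for the linear phase $e^{\pm it\lambda}$ of the wave evolution. First, the phase has no stationary point, so stationary phase is replaced by straight integration by parts in~$\lambda$. Second, and more crucially, the amplitude
\[
a(\lambda;\xi,\xi') := (\langle\xi\rangle\langle\xi'\rangle)^{-1/2}\,\frac{\lambda(1-\chi)(\lambda)}{W(\lambda)}\,m_+(\xi,\lambda)\,m_-(\xi',\lambda)
\]
does not vanish as $|\lambda|\to\infty$ (its limit is $(\langle\xi\rangle\langle\xi'\rangle)^{-1/2}/(2i)$), so a direct IBP in~$\lambda$ would leave a non-vanishing boundary contribution. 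The remedy is to trade the prefactor $\lambda$ for a $\xi'$-derivative via
\[
\lambda\,e^{i\lambda(\pm t+\xi-\xi')} = i\,\partial_{\xi'}\,e^{i\lambda(\pm t+\xi-\xi')},
\]
and then integrate by parts in $\xi'$ against $\phi$; this is precisely the mechanism that produces the $\|\phi'\|_1$ contribution to the bound.

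I would split into cases following Lemma~\ref{lemma16}: (a)~$\xi>0>\xi'$, (b)~$\xi>\xi'>0$, with (c)~$0>\xi>\xi'$ reduced by reflection. In case~(a), write $f_+(\xi,\lambda)=e^{i\lambda\xi}m_+$, $f_-(\xi',\lambda)=e^{-i\lambda\xi'}m_-$ to expose $\tau=\pm t+\xi-\xi'$, and apply the $\xi'$-IBP just described. The boundary at $\xi'=-\infty$ vanishes since $\phi\in W^{1,1}\Rightarrow \phi(-\infty)=0$, while the boundary at $\xi'=\xi$ equals
\[
i\,\phi(\xi)\,\langle\xi\rangle^{-1}\int e^{\pm it\lambda}\,\frac{(1-\chi)(\lambda)}{W(\lambda)}\,m_+(\xi,\lambda)\,m_-(\xi,\lambda)\,d\lambda.
\]
The integrand of this inner integral is $O(\lambda^{-1})$ with derivative $O(\lambda^{-2})$ (both $L^1_\lambda$), so one further $\lambda$-IBP bounds it by $t^{-1}$, giving the boundary contribution $\les t^{-1}|\phi(\xi)|\les t^{-1/2}\|\phi'\|_1$ (using $\|\phi\|_\infty\le\|\phi'\|_1$ and $t^{-1}\le t^{-1/2}$ for $t\ge1$; the range $t\le 1$ is handled trivially). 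The remaining interior terms, obtained by distributing $\partial_{\xi'}$ onto $\phi$, onto the weight $\langle\xi'\rangle^{-1/2}$, and onto $m_-(\xi',\lambda)$ (with the $m_-$-derivative bound as in~\eqref{66}), each reduce to an inner integral $\int e^{i\lambda\tau}b(\lambda)\,d\lambda$ with $|b|\les\lambda^{-1}$ and $|b'|\les\lambda^{-2}$; one $\lambda$-IBP then produces the pointwise bound $(\langle\xi\rangle\langle\xi'\rangle)^{-1/2}(1+|\tau|)^{-1}$, with an additional $\langle\xi'\rangle^{-1}$ factor in the weight- and $m_-$-derivative cases.

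The pointwise inequality $(\langle\xi\rangle\langle\xi'\rangle)^{-1/2}(1+|\tau|)^{-1}\les t^{-1/2}$ needed to conclude is proved by the same dichotomy as in Lemma~\ref{lemma13}: either $|\tau|\gtrsim t$, so $(1+|\tau|)^{-1}\les t^{-1}\le t^{-1/2}$; or $|\tau|\ll t$, which forces $|\xi-\xi'|\sim t$ and hence $\max(\langle\xi\rangle,\langle\xi'\rangle)\gtrsim t$, so that $(\langle\xi\rangle\langle\xi'\rangle)^{-1/2}\les t^{-1/2}$. Integrating the pointwise estimate against $|\phi'|$ in $\xi'$ recovers the $t^{-1/2}\|\phi'\|_1$ term, while integrating the extra $\langle\xi'\rangle^{-1}$-decorated bound against $|\phi|$ recovers $t^{-1/2}\|\phi\|_1$. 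Case~(b) is handled via the reflection decomposition~\eqref{54}: the $\alpha_-$ sub-term, with $|\alpha_-|\les\lambda^{-2}$ by~\eqref{eq:alpha-}, has an amplitude that already decays at infinity, so direct $\lambda$-IBP suffices; the $\beta_-$ sub-term, with $\beta_-=1+O(\lambda^{-1})$ by~\eqref{eq:beta-}, requires the $\xi'$-IBP trick exactly as in~(a) but with phase $\tau=\pm t+\xi-\xi'$. The expected principal obstacle is the clean handling of the boundary term at $\xi'=\xi$, which depends on the additional $\lambda$-IBP being available --- i.e., on the decay of $\partial_\lambda[(1-\chi)m_+m_-/W]$ at infinity, which follows from the derivative bounds on $m_\pm$ and $W$ recalled at the end of the proof of Lemma~\ref{lemma16}.
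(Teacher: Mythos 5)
Your proposal follows essentially the same route as the paper: the same case split ($\xi>0>\xi'$, $\xi>\xi'>0$, reflection for the third), the same key device of trading the factor $\lambda$ for a $\partial_{\xi'}$ via $\lambda e^{-i\xi'\lambda}=i\partial_{\xi'}e^{-i\xi'\lambda}$ and integrating by parts against $\phi$ (which is exactly where the $\|\phi'\|_{L^1}$ term comes from), the same treatment of the boundary term at $\xi'=\xi$ and of the interior terms by one further $\lambda$-integration by parts, and the same handling of $\xi>\xi'>0$ through $\alpha_-,\beta_-$ with \eqref{eq:alpha-}, \eqref{eq:beta-}.

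One detail is stated incorrectly and deserves care: the inner amplitude $b(\lambda)=\frac{(1-\chi)(\lambda)}{W(\lambda)}m_+m_-$ is $O(\lambda^{-1})$ and therefore \emph{not} in $L^1_\lambda$, contrary to your parenthetical claim. Your $\lambda$-integration by parts is still legitimate (since $b\to0$ and $b'=O(\lambda^{-2})\in L^1$), but it only yields $|\tau|^{-1}$, not $(1+|\tau|)^{-1}$; the uniform boundedness needed for $|\tau|\les1$ (and for the range $t\les1$ you call trivial) requires an extra input. The paper supplies it by exploiting the cancelation $W(-\lambda)=-W(\lambda)+O(1)$, i.e.\ the leading part of $b$ is $c\lambda^{-1}(1-\chi)(\lambda)$ whose oscillatory integral is bounded uniformly in the phase parameter, with an $O(\lambda^{-2})\in L^1$ remainder. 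With that substitution your argument matches the paper's proof.
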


\begin{proof}
In order to prove (\ref{eq:wave_biglam}), we will need to
distinguish the cases $\xi >0>\xi '$, $\xi >\xi '>0$, and $0>\xi
>\xi '$.  By symmetry, it will suffice to consider the first two.

\smallskip
\underline{Case 1:} $\xi >0>\xi '$.

\smallskip
Integrating by parts yields
 \begin{align*} &\bigg|(\langle\xi\rangle\langle\xi '\rangle
)^{-\frac{1}{2}}\int e^{i\lambda(\pm t+\xi -\xi ')}\frac{\lambda
(1-\chi )(\lambda )}{W(\lambda )}m_+(\xi,\lambda )m_-(\xi ',\lambda
)\, d\lambda \bigg|\\
&\les (\langle\xi\rangle\langle\xi '\rangle )^{-\frac{1}{2}}
|t\pm(\xi-\xi')|^{-1} \les t^{-\frac12}
\end{align*}
provided $|t\pm(\xi-\xi')|\ge1$. If this fails, then we need to
integrate by parts in~$\xi'$ to remove one factor of~$\lambda$:
since $\lambda e^{-i\xi'\lambda} = i\partial_{\xi'}
e^{-i\xi'\lambda}$, it follows that
\begin{align*}
  & \int_{-\infty}^\xi \langle\xi\rangle^{-\frac12} \langle\xi '\rangle^{-\frac{1}{2}}
  \int e^{i\lambda(\pm t+\xi -\xi ')}\frac{\lambda
(1-\chi )(\lambda )}{W(\lambda )}m_+(\xi,\lambda )m_-(\xi ',\lambda
)\, d\lambda\, \phi(\xi')\,d\xi' =\\
& i\la \xi\ra^{-1} \int e^{\pm it\lambda} \frac{(1-\chi )(\lambda
)}{W(\lambda )}m_+(\xi,\lambda )m_-(\xi,\lambda )\, d\lambda\,
\phi(\xi)
\\
 &-i\int_{-\infty}^\xi \langle\xi\rangle^{-\frac12}
  \int e^{i\lambda(\pm t+\xi -\xi ')}\frac{
(1-\chi )(\lambda )}{W(\lambda )}m_+(\xi,\lambda )
\partial_{\xi'}\big[\langle\xi '\rangle^{-\frac{1}{2}}m_-(\xi ',\lambda )\, \phi(\xi')\big]\,
d\lambda\,d\xi'
\end{align*}
Denote the two expressions after the equality sign by $A$ and $B$, respectively.
First, exploiting the cancelation due to $W(-\lambda)=-W(\lambda)+ O(1)$ as $\lambda\to\infty$, we see that
\[
\sup_{\xi>0>\xi'}\Big|\int e^{it\lambda} \frac{(1-\chi )(\lambda
)}{W(\lambda )}m_+(\xi,\lambda )m_-(\xi ,\lambda )\, d\lambda\Big|
\les 1
\]
Furthermore, since $\vert\partial_{\lambda}\{\frac{(1-\chi )(\lambda
)}{W(\lambda )}m_+(\xi,\lambda )m_-(\xi ,\lambda )\}\vert\les \chi_{[|\lambda |\gtr 1]}|\lambda |^{-2}$, integrating by parts in $\lambda$ shows that the left-hand side is in fact $\les t^{-1}$.
Hence,
\[
A\les \la t\ra^{-1} \sup|\phi|\le \la t\ra^{-1} \int ( |\phi'(\xi')|+|\phi(\xi')|)\, d\xi'.
\]
Second, by the same cancelation,
\begin{align*}
B\les \int (\la\xi\ra\la\xi'\ra)^{-\frac12} (1+|t\pm (\xi-\xi')|)^{-1}( |\phi'(\xi')|+|\phi(\xi')|)\, d\xi'\\\les  \la t\ra^{-1} \int ( |\phi'(\xi')|+|\phi(\xi')|)\, d\xi'.
\end{align*}
which gives the desired bound as usual.

\smallskip
\underline{Case 2:}  $\xi >\xi '>0$

\medskip

In analogy with \eqref{70} and \eqref{71} we need to consider
\begin{align}\label{70'}
& \int_{-\infty}^{\infty}e^{it\lambda}e^{i\lambda (\xi
+\xi ')}\frac{\lambda (1-\chi (\lambda ))}{W(\lambda
)}\alpha_-(\lambda )\frac{m_+(\xi,\lambda )m_+(\xi ',\lambda
)}{\sqrt{\langle\xi\rangle\langle\xi '\rangle }}\, d\lambda , \\
\label{71'} &\int_{-\infty}^{\infty}e^{it\lambda}e^{i\lambda (\xi
-\xi ')}\frac{\lambda (1-\chi (\lambda ))}{W(\lambda
)}\beta_-(\lambda )\frac{m_+(\xi,\lambda )\overline{m_+(\xi
',\lambda )}}{\sqrt{\langle\xi\rangle\langle\xi '\rangle }}\,
d\lambda .
\end{align}
The integral in \eqref{70'} is $\les \la t\ra^{-\frac12}$ uniformly in $\xi,\xi'$ due to the decay of~$\alpha_-$, see~\eqref{eq:alpha-}.
On the other hand, the integral in~\eqref{71'} is not a bounded function in $\xi,\xi'$ due to the lack of decay in~$\lambda$, see~\eqref{eq:beta-}.
Thus, we again need to redeem one power of $\lambda$ via a $\xi'$ differentiation, see above.
\end{proof}

\parindent 0pt

\end{document}